\documentclass[11pt, a4paper]{article}
\usepackage{amssymb,amsmath,amsthm}
\usepackage{fancyhdr}
\usepackage[british]{babel}
\usepackage{enumitem}
\usepackage{color}
\usepackage[title]{appendix}
\usepackage{amsmath,amsthm}

\usepackage{hyperref}
\usepackage[margin=2.5cm]{geometry}

\newtheorem{theorem}{Theorem}[section]
\newtheorem{prop}[theorem]{Proposition}
\newtheorem{lemma}[theorem]{Lemma}

\theoremstyle{definition}

\newtheorem{question}{Question}

\newtheorem{defn}[theorem]{Definition}

\newtheorem{claim}[theorem]{Claim}
\newtheorem{fact}[theorem]{Fact}

\newcommand{\btheorem}{\begin{theorem}}
\newcommand{\etheorem}{\end{theorem}}
\newcommand{\bconjecture}{\begin{conjecture}}
\newcommand{\econjecture}{\end{conjecture}}
\newcommand{\bproposition}{\begin{proposition}}
\newcommand{\eproposition}{\end{proposition}}
\newcommand{\bdefinition}{\begin{definition}}
\newcommand{\edefinition}{\end{definition}}
\newcommand{\bcorollary}{\begin{corollary}}
\newcommand{\ecorollary}{\end{corollary}}
\newcommand{\bproof}{\begin{proof}}
\newcommand{\eproof}{\end{proof}}
\newcommand{\bclaim}{\begin{claim}}
\newcommand{\eclaim}{\end{claim}}
\newcommand{\bquestion}{\begin{question}}
\newcommand{\equestion}{\end{question}}
\newcommand{\bfact}{\begin{fact}}
\newcommand{\efact}{\end{fact}}
\newcommand{\bremark}{\begin{remark}}
\newcommand{\eremark}{\end{remark}}
\newcommand{\eexample}{\end{example}}
\newcommand{\bexample}{\begin{example}}

\newcommand{\elemma}{\end{lemma}}
\newcommand{\blemma}{\begin{lemma}}

\newcommand{\be}{\beta}

\newcommand{\F}{\mathcal{F}}
\newcommand{\C}{\mathcal{C}}

\def \hcf{\mathrm{hcf}}
\newcommand{\eps}{\varepsilon}

\title{Graph tilings in incompatibility systems}

\author{Jie Hu$^a$ \ \ Hao Li$^b$\ \ Yue Wang$^c$\ \ Donglei Yang$^d$\unskip\thanks{Data Science Institute, Shandong University, Shandong, China. Email: {\tt dlyang@sdu.edu.cn}. Supported by the China Postdoctoral Science Foundation (2021T140413), Natural Science Foundation of China (12101365) and Natural Science Foundation of Shandong Province (ZR2021QA029).}
\unskip\\[.5em]
{\small $^a$ Center for Combinatorics and LPMC,}\\
{\small Nankai University, Tianjin 300071, China}\\
{\small $^b$ Laboratoire Interdisciplinaire des Sciences du Num\'erique,}\\
{\small CNRS--Universit\'e Paris-Saclay, Orsay 91405, France}\\
{\small $^c$ School of Mathematics and Statistics,}\\
{\small Shandong Normal University, Jinan 250358, China}\\
{\small $^d$ Data Science Institute,}\\
{\small Shandong University,  Jinan 250100, China}\\
 }
\date{}

\begin{document}
\maketitle


\begin{abstract}
  Given two graphs $H$ and $G$, an \emph{$H$-tiling} of $G$ is a collection of vertex-disjoint copies of $H$ in $G$ and an \emph{$H$-factor} is an $H$-tiling that covers all vertices of $G$. K\"{u}hn and Osthus managed to characterize, up to an additive constant, the minimum degree threshold which forces an $H$-factor in a host graph $G$. In this paper we study a similar tiling problem in a system that is locally bounded. An \emph{incompatibility system} $(G,\mathcal{F})$ consists of a graph $G$ and a family $\mathcal{F}=\{F_v\}_{v\in V(G)}$ over $G$ with $F_v\subseteq \{\{e,e'\}\in {E(G)\choose 2}: e\cap e'=\{v\}\}$. We say that two edges $e,e'\in E(G)$ are \emph{incompatible} if $\{e,e'\}\in F_v$ for some $v\in V(G)$, and otherwise \emph{compatible}. A subgraph $H$ of $G$ is \emph{compatible} if every pair of edges in $H$ are compatible. An incompatibility system $(G,\mathcal{F})$ is \emph{$\Delta$-bounded} if for any vertex $v$ and any edge $e$ incident with $v$, there are at most $\Delta$ members of $F_v$ containing $e$. This notion was partly motivated by a concept of transition system introduced by Kotzig in 1968, and first formulated by Krivelevich, Lee and Sudakov to study the robustness of Hamiltonicity of Dirac graphs.

  We prove that for any $\alpha>0$ and any graph $H$ with $h$ vertices, there exists a constant $\mu>0$ such that for any sufficiently large $n$ with $n\in h\mathbb{N}$, if $G$ is an $n$-vertex graph with $\delta(G)\ge(1-\frac{1}{\chi^*(H)}+\alpha)n$ and $(G,\mathcal{F})$ is a $\mu n$-bounded incompatibility system, then there exists a compatible $H$-factor in $G$, where the value $\chi^*(H)$ is either the chromatic number $\chi(H)$ or the critical chromatic number $\chi_{cr}(H)$ and we provide a dichotomy as in the K\"{u}hn--Osthus result. Moreover, we give examples $H$ for which there exists an $\mu n$-bounded incompatibility system $(G, \mathcal{F})$ with $n\in h\mathbb{N}$ and $\delta(G)\ge(1-\frac{1}{\chi^*(H)}+\frac{\mu}{2})n$ such that $G$ contains no compatible $H$-factor. Unlike in the previous work of K\"{u}hn and Osthus on embedding $H$-factors, our proof uses the lattice-based absorption method.
\end{abstract}

\section{Introduction}
All the graphs considered here are finite, undirected and simple.
Let $H$ be an $h$-vertex graph and $G$ be an $n$-vertex graph. An \emph{$H$-tiling} is a collection of vertex-disjoint copies of $H$ in $G$. An \emph{$H$-factor} (or \emph{perfect $H$-tiling}) is an $H$-tiling which covers all vertices of $G$. Note that $n\in h\mathbb{N}$ is a necessary condition for $G$ containing an $H$-factor.

\subsection{Perfect graph tilings}

One of the most fundamental research topics in extremal graph theory is to determine sufficient conditions forcing spanning structures, such as perfect matchings, Hamilton cycles, $H$-factors, etc. Textbook results of Hall and Tutte give a sufficient condition for the existence of a perfect matching (see e.g. \cite{Diestel}). A classical theorem of Dirac \cite{Dirac} states that every graph $G$ on $n\ge 3$ vertices with minimum degree $\delta(G)\ge n/2$ contains a Hamilton cycle. For $H$-factors,
when $H=K_k$, the seminal Hajnal--Szemer\'edi theorem \cite{HS} characterizes the minimum degree forcing a $K_k$-factor and the bound is best possible.
\begin{theorem}[Hajnal and Szemer\'edi \cite{HS}]\label{thm:HS}
Let $G$ be an $n$-vertex graph with $n\in k\mathbb{N}$. If $\delta(G)\ge \left(1-1/k\right)n$, then $G$ contains a $K_k$-factor.
\end{theorem}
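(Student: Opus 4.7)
The plan is to argue by extremality: fix a $K_k$-tiling of maximum size and show via a careful exchange argument that it must be a factor. This follows the structure of the original Hajnal--Szemer\'edi proof, and can also be organised along the lines of the shorter variant due to Kierstead and Kostochka. The base instance $n = k$ is immediate, since $\delta(G) \ge n - 1$ forces $G = K_k$.

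For the main argument, I would fix a $K_k$-tiling $\mathcal{T}$ of $G$ of maximum size and, among all such tilings, one that minimises an auxiliary potential (for example, a weighted count of non-edges between the uncovered set $U = V(G) \setminus V(\mathcal{T})$ and $V(\mathcal{T})$). Assume for contradiction that $|U| \ge k$, pick $u \in U$, and note $|N(u)| \ge (1 - 1/k)n$. By averaging, many cliques $K \in \mathcal{T}$ lie entirely in $N(u)$; call such $K$ \emph{full}. If a full clique exists, replacing $K$ by $(K \setminus \{w\}) \cup \{u\}$ for some $w \in K$ yields a tiling of the same size that now omits $w$ instead of $u$, and one iterates along the chain of swaps initiated by $w$, aiming to terminate at a swap that strictly decreases $|U|$.

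If no full clique exists, every $K \in \mathcal{T}$ contains at least one non-neighbour of $u$; counting non-neighbours against the bound $|V(G) \setminus N(u)| \le n/k$ leaves almost no room, and one extracts a chain of \emph{deficient} cliques (those with exactly one or two non-neighbours of $u$) along which a controlled sequence of exchanges covers $u$. The potential-minimality of $\mathcal{T}$ is what forbids the chain from cycling: any closed cycle of swaps would either contradict maximality of $\mathcal{T}$, or rearrange $\mathcal{T}$ into a tiling of the same size with strictly smaller potential.

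The main obstacle is that the threshold $(1 - 1/k)n$ is tight, with no slack whatsoever: absorption-based or Szemer\'edi-regularity arguments, which normally carry a factor of $\alpha n$ error, cannot be applied as the extremal configuration (an unbalanced complete $k$-partite graph) rules out even a constant-additive improvement. The proof must therefore be entirely combinatorial, and the delicate points are (i) engineering the potential function so that the cyclic-swap scenarios are excluded, and (ii) analysing the distribution of non-neighbours of $u$ across the cliques of $\mathcal{T}$ precisely enough that, in the absence of a full clique, a usable chain of deficient cliques is guaranteed to exist.
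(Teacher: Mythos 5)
The paper does not prove this statement; it cites it as a classical theorem of Hajnal and Szemer\'edi \cite{HS}, so there is no in-paper proof against which your attempt can be checked. Your sketch correctly identifies the right strategy for the genuine proof: a purely combinatorial extremal argument over maximum $K_k$-tilings, with an auxiliary potential to break ties and an exchange (swap/chain) analysis in the spirit of Hajnal--Szemer\'edi and the shorter proof of Kierstead and Kostochka, and you are right that absorption or regularity methods are hopeless here because the bound $(1-1/k)n$ is tight with no additive slack.

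That said, what you have written is a proof \emph{plan}, not a proof, and the gaps you yourself flag as ``the delicate points'' are exactly where the theorem lives. Concretely: (i) you never define the potential function nor show that swaps never increase it while some swap strictly decreases it, which is the only thing that stops the exchange chain from cycling forever; (ii) when no clique of $\mathcal{T}$ lies entirely in $N(u)$, the claim that the $\le n/k$ non-neighbours of $u$ must distribute so that a usable chain of one- or two-deficient cliques exists is the heart of the counting argument, and merely noting that there is ``almost no room'' does not construct the chain. In Kierstead--Kostochka this step requires introducing a special partition lemma and a careful case analysis of how deficient cliques interact; none of that is present here. So the route is right, but the proposal is not a correct proof as it stands.
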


Up to an error term, the following theorem of Alon and Yuster \cite{AY} generalizes Theorem~\ref{thm:HS}.

\begin{theorem}[Alon and Yuster \cite{AY}]\label{thm:AY}
For every $\alpha>0$ and every graph $H$ there exists an integer $n_0$ such that every graph $G$ whose order $n\ge n_0$ is divisible by $|H|$ and whose minimum degree is at least $(1-1/\chi(H)+\alpha)n$ contains an $H$-factor.
\end{theorem}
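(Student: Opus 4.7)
The plan is to apply Szemer\'edi's Regularity Lemma together with the Blow-up Lemma of Koml\'os--S\'ark\"ozy--Szemer\'edi, reducing the $H$-factor problem to a $K_{\chi(H)}$-factor problem in an auxiliary reduced graph, to which Theorem~\ref{thm:HS} can then be applied. Choose constants $\varepsilon \ll d \ll \alpha, 1/|H|$ and apply the Regularity Lemma to $G$ to obtain an equitable partition $V_0, V_1, \ldots, V_k$ with $|V_0| \le \varepsilon n$ and clusters of common size $m$. Form the reduced graph $R$ on vertex set $[k]$ whose edges correspond to $\varepsilon$-regular pairs of density at least $d$. A standard counting argument transfers the minimum-degree hypothesis to $R$, yielding $\delta(R) \ge (1 - 1/\chi(H) + \alpha/2)k$, since the losses from irregular and sparse pairs are absorbed by half of the $\alpha$ slack.

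Next I would invoke Theorem~\ref{thm:HS} with $k$ replaced by $\chi(H)$, after possibly deleting at most $\chi(H) - 1$ clusters to ensure $\chi(H) \mid |V(R)|$, to obtain a $K_{\chi(H)}$-factor $\mathcal{K}$ in $R$. Each clique of $\mathcal{K}$ corresponds to a $\chi(H)$-tuple of clusters in $G$ that is pairwise $\varepsilon$-regular of density at least $d$. Fix a proper $\chi(H)$-colouring of $H$ with colour classes of sizes $h_1, \ldots, h_{\chi(H)}$. For each clique of $\mathcal{K}$, discard a small set of atypical vertices from each cluster to make the tuple super-regular, and then apply the Blow-up Lemma to tile the remainder with vertex-disjoint copies of $H$ in which the vertices of colour $i$ are drawn from the cluster designated for colour $i$.

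The delicate part is handling the leftover set $W$ of uncovered vertices, which consists of $V_0$, the deleted clusters, and the atypical vertices stripped out in the super-regularisation step; we have $|W| = o(n)$. Before executing the Blow-up step, I would greedily construct an $H$-tiling that covers $W$: for each $w \in W$, the minimum-degree condition and the $\alpha n$ slack guarantee that $w$ has linearly many neighbours in every cluster, so we can find a copy of $H$ through $w$ whose remaining $|H|-1$ vertices are drawn from some tuple of $\mathcal{K}$, with $h_i$ vertices taken from the cluster playing the role of colour $i$. The main obstacle will be divisibility book-keeping: these absorbing steps must be orchestrated so that, afterwards, each cluster inside every tuple of $\mathcal{K}$ still has size an exact multiple of the corresponding $h_i$, so that the Blow-up Lemma can complete a genuine $H$-factor on the untouched part. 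The $\alpha n$ slack is precisely what enables this balancing — it lets us route a few extra $H$-copies through hand-picked vertices to fix the residues modulo $h_i$ — and it explains why the threshold in Theorem~\ref{thm:AY} cannot in general be pushed below $(1-1/\chi(H))n$ without replacing $\chi(H)$ by the critical chromatic number $\chi_{\mathrm{cr}}(H)$.
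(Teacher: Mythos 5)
The paper does not prove this theorem at all: it is stated as an attributed result cited from Alon and Yuster~\cite{AY}, and the argument you sketch has no counterpart in this document. So the comparison must be against the literature rather than the paper.

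Your high-level plan (Regularity Lemma; apply Hajnal--Szemer\'edi to the reduced graph to get a $K_{\chi(H)}$-factor; Blow-up Lemma inside each clique of clusters; absorb the exceptional set and fix divisibility using the $\alpha n$ slack) is a well-trodden and correct route. It is in fact closer in spirit to the later proof of Koml\'os, S\'ark\"ozy and Szemer\'edi~\cite{KSS}, who used the Blow-up Lemma to remove the $\alpha n$ error entirely; the original Alon--Yuster argument predates the Blow-up Lemma and instead embeds the tiles greedily using a counting argument inside regular tuples, with the $\alpha n$ term arising precisely from the uncovered leftover. So your sketch proves a theorem that is known to be true, but it should be understood as the KSS variant rather than the original Alon--Yuster proof.

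Two details in your sketch are slightly off and would need repair before it compiles into a real proof. First, the target condition on the cluster sizes is stated too weakly: it is not enough that each cluster assigned to colour $i$ have size divisible by $h_i$. To tile an $r$-partite tuple perfectly with copies of $H$ each taking $h_i$ vertices from part~$i$, the parts must be in exact proportion $h_1 : h_2 : \cdots : h_r$, i.e.\ all the ratios $m_i/h_i$ must coincide; separate divisibility by $h_i$ is necessary but not sufficient. Second, the claim that the minimum degree condition gives $w$ ``linearly many neighbours in every cluster'' is an overstatement. What the degree condition (via the reduced graph and Fact~\ref{large deg}) gives is that $w$ has large degree into \emph{most} clusters, or equivalently that there are many regular tuples into which $w$ can be routed; individual clusters can see very few neighbours of $w$. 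The standard fix is the same one the paper uses for its main theorem: one routes each leftover vertex through a tuple in which it has large degree into all $\chi(H)-1$ of the relevant colour classes (this is where Fact~\ref{commom neighbor} type arguments enter), and keeps an accounting lemma to certify that the rebalancing steps ultimately restore the required proportionality. With those two points sharpened, your sketch is a legitimate proof outline.
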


Later, it was proved by Koml{\'o}s, S{\'a}rk{\"o}zy and Szemer{\'e}di~\cite{KSS} that the $o(n)$ term in Theorem \ref{thm:AY} can be replaced with a constant $C=C(H)$, which resolves a conjecture of Alon and Yuster~\cite{AY}.
As given in \cite{cooley2007perfect,Kawa}, there are graphs $H$ for which the term $1-1/\chi(H)$ in the minimum degree condition can be improved significantly. To illustrate this, Koml{\'o}s~\cite{kom00} introduced the concept of critical chromatic number. 
The \emph{critical chromatic number} $\chi_{cr}(H)$ of a graph $H$ is defined as $(\chi(H)-1)|H|/(|H|-\sigma(H))$, where $\sigma(H)$ denotes the minimum size of the smallest color class in a proper coloring of $H$ with $\chi(H)$ colors.
Note that $\chi(H)-1<\chi_{cr}(H)\le\chi(H)$.
Koml\'{o}s \cite{kom00} proved that one can replace $\chi(H)$ with $\chi_{cr}(H)$ at the price of obtaining an $H$-tiling covering all but $\eps n$ vertices, and he also conjectured that the error term $\eps n$ can be replaced with a constant that only depends on $H$. This was confirmed by Shokoufandeh and Zhao \cite{shokou03}.

\begin{theorem}[Shokoufandeh and Zhao \cite{shokou03}]\label{thm:ShZh}
For any $H$ there is an integer $n_0$ so that if $G$ is a graph on $n\ge n_0$ vertices and minimum degree at least $(1-1/\chi_{cr}(H))n$, then $G$ contains an $H$-tiling that covers all but at most $5|H|^2$ vertices.
\end{theorem}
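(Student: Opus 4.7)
My plan follows the regularity method: (i) apply Szemer\'edi's regularity lemma to $G$; (ii) find a near-perfect fractional $H$-tiling in the reduced graph using the critical chromatic number threshold; (iii) lift this to a near-spanning $H$-tiling of $G$ by the blow-up lemma; and (iv) execute a cleanup step to reduce the leftover from $o(n)$ down to $5|H|^2$.

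For the first three steps, fix $0<\eps\ll d\ll 1/|H|$ and apply the regularity lemma to obtain an $\eps$-regular partition $V_0,V_1,\ldots,V_M$ with exceptional set $|V_0|\le\eps n$ and all other parts of common size $L$. Let $R$ be the reduced graph in which $V_iV_j$ is an edge whenever $(V_i,V_j)$ is $\eps$-regular of density at least $d$. A standard counting argument shows $\delta(R)\ge\bigl(1-1/\chi_{cr}(H)-\gamma\bigr)M$ with $\gamma=\gamma(\eps,d)\to 0$. By a fractional version of Koml\'os's tiling theorem at the critical chromatic number, $R$ admits a fractional $H$-tiling of total weight at least $(1-\gamma')M/|H|$. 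Grouping clusters into tuples according to the weighted copies of $H$ supporting this fractional tiling, making each tuple super-regular by removing $O(\eps L)$ atypical vertices per cluster, and applying the blow-up lemma of Koml\'os--S\'ark\"ozy--Szemer\'edi within each tuple, I would obtain an $H$-tiling of $G$ covering all but a leftover set $U$ of size $O(\eps n)$.

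The main obstacle is then the final reduction from $|U|=O(\eps n)$ to $|U|\le 5|H|^2$. For this I would reserve beforehand a structured collection of \emph{flexible} copies of $H$ in $G$, each capable of absorbing a single external vertex by swapping it in for a vertex of a designated colour class. The condition $\delta(G)\ge\bigl(1-1/\chi_{cr}(H)\bigr)n$ ensures that every $v\in U$ has sufficiently many neighbours within each such colour class, so a greedy sequence of swaps successively eliminates uncovered vertices. A potential function tracking both the ``slot type'' (one of $|H|$ positions in $H$) and the ``cluster type'' (at most $|H|$ equivalence classes of cluster tuples) yields the quadratic bound $5|H|^2$, below which no further swap can be guaranteed without disturbing the rest of the tiling. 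The hardest point is the extremal case: when $G$ is close to the tight critical-chromatic construction, a stability-style analysis is needed, showing that the tiling essentially follows a proper $\chi(H)$-colouring of $G$ and that all uncovered vertices lie in the smallest colour class, which then forces $|U|\le 5|H|^2$.
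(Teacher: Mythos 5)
This theorem is cited from Shokoufandeh and Zhao and is used in the paper as a black box (it is invoked in the proof of Lemma~\ref{almost} to tile the reduced graph with bottle graphs); the paper contains no proof of it, so there is no internal argument to compare against. Evaluating your proposal on its own terms: steps (i)--(iii) essentially reproduce Koml\'os's earlier result, which already yields an $H$-tiling leaving $o(n)$ vertices uncovered, and are broadly fine. The entire content of Theorem~\ref{thm:ShZh} beyond Koml\'os is the reduction of the leftover from $o(n)$ to a constant, and that is precisely where your step (iv) has genuine gaps.

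First, the assertion that ``$\delta(G)\ge(1-1/\chi_{cr}(H))n$ ensures that every $v\in U$ has sufficiently many neighbours within each such colour class'' is false near the extremal configuration of Proposition~\ref{kom}: in a complete $\chi(H)$-partite graph a vertex $v\in V_j$ has no neighbours in $V_j$ at all, so a swap requiring $v$ to be adjacent to the remainder of a flexible copy whose designated class lies in $V_j$ cannot be performed when that remainder meets $V_j$. Second, a swap as you describe it removes $v$ from $U$ but deposits the evicted vertex $w$ back into $U$, so a greedy sequence of swaps does not by itself shrink $|U|$; you would need a closed absorber whose evicted vertex is itself recoverable, and that structure is not supplied. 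Third, absorption in the R\"odl--Ruci\'nski--Szemer\'edi sense is engineered to produce \emph{perfect} structures, whereas at the threshold $(1-1/\chi_{cr}(H))n$ a perfect $H$-tiling can genuinely fail to exist; an absorbing set does not naturally output an almost-perfect tiling with a prescribed \emph{constant} leftover, and the constant $5|H|^2$ is asserted via a ``potential function over slot and cluster types'' without any computation linking it to the construction. Finally, the extremal case is waved at with a generic stability claim, but it is exactly this case that forces the constant leftover and requires an explicit structural argument. The actual Shokoufandeh--Zhao proof is a direct and delicate analysis of the reduced graph together with an explicit treatment of near-extremal configurations, not an absorption argument (which postdates their paper); your step (iv) would need to be replaced wholesale before the proposal could be considered a proof.
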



Further, a significant result of K\"uhn and Osthus \cite{KO} managed to characterize, up to an additive constant, the best possible minimum degree condition which forces an $H$-factor. To state their result, we need more definitions here.
Let $\chi(H)=k$ and $\C(H)$ be the family of all proper $k$-colorings of $H$.
Given a $k$-coloring $c$ in $\C(H)$, let $h_1^c\le \cdots \le h_k^c$ be the sizes of the color classes of $c$ and $D(c):=\{h_{i+1}^c-h_i^c \mid i\in [k-1]\}$.
Let \[D(H):=\bigcup_{c\in\C(H)}D(c).\]
Then we denote by $\hcf_{\chi}(H)$ the highest common factor of all the integers in $D(H)$. In particular, if $D(H)=\{0\}$, then we set $\hcf_{\chi}(H):=\infty$.
Also, we write $\hcf_c(H)$ for the highest common factor of all the orders of components of $H$. 
If $\chi(H)> 2$, then define $\hcf(H)=1$ if $\hcf_{\chi}(H)=1$.
If $\chi(H)= 2$, then define $\hcf(H)=1$ if both $\hcf_{c}(H)=1$ and $\hcf_{\chi}(H)\le 2$.
Then let
\[
\chi^*(H) = \begin{cases}
\chi_{cr}(H) &\text{ if } \hcf(H)=1,\\
\chi(H) &\text{ otherwise.}
\end{cases}
\]

\begin{theorem}[K\"{u}hn and Osthus \cite{KO}]\label{thm:KO}
For any $H$ there exist integers $C=C(H)$ and $n_0=n_0(H)$ such that every graph $G$ whose order $n\ge n_0$ is divisible by $|H|$ and whose minimum degree is at least $(1-1/\chi^*(H))n+C$ contains an $H$-factor.
\end{theorem}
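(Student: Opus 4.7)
The plan is to prove Theorem~\ref{thm:KO} by combining the degree form of Szemer\'edi's Regularity Lemma with the Blow-up Lemma of Koml\'os, S\'ark\"ozy and Szemer\'edi, and to handle the divisibility constraints encoded in the definition of $\hcf(H)$ by an absorbing argument.

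First I would apply the degree form of the Regularity Lemma to $G$ with parameters $\eps \ll d \ll 1/|H|$, producing an $\eps$-regular partition $V_1, \ldots, V_L$ of all but $\eps n$ vertices of $G$ and a reduced graph $R$ on $[L]$ with $\delta(R) \ge (1 - 1/\chi^*(H) + d)L$. The slack $d$ is essentially free from the minimum-degree hypothesis, and it suffices to locate, inside $R$, a near-perfect tiling by weighted copies of $K_k$ (with $k = \chi(H)$) whose cluster-role sizes are proportional to the color-class sizes of $H$. When $\chi^*(H) = \chi(H)$, Theorem~\ref{thm:HS} applied to $R$ provides an exact $K_k$-factor with balanced tuples; when $\chi^*(H) = \chi_{cr}(H)$, a Koml\'os-type argument (of the sort used in the proof of Theorem~\ref{thm:ShZh}) provides a near-perfect $K_k$-tiling of $R$ with unbalanced tuples whose size ratios mirror those of the color classes of $H$.

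Within each $k$-tuple $(W_1, \ldots, W_k)$ obtained above, I would apply the Blow-up Lemma to embed a perfect $H$-tiling, provided the sizes $|W_1|, \ldots, |W_k|$ are divisible by the corresponding color-class sizes of $H$ and satisfy the right proportionality. To force these exact divisibility relations one must preprocess the $k$-tuples by transferring $O(1)$ vertices between clusters, and this is where the hypothesis $\hcf(H) = 1$ enters decisively: when $\chi^*(H) = \chi_{cr}(H)$ and $\chi(H) \ge 3$, the condition $\hcf_{\chi}(H) = 1$ supplies a family of proper colorings of $H$ whose color-class-size vectors span $\mathbb{Z}$, so swapping a bounded number of $H$-copies corrects any residual imbalance; when $\chi(H) = 2$, the additional condition $\hcf_c(H) = 1$ plays the analogous role at the level of components.

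Before running these steps I would also set aside an \emph{absorbing} set $A \subseteq V(G)$ of size $o(n)$, constructed in the style of R\"odl--Ruci\'nski--Szemer\'edi, with the property that for any sufficiently divisible leftover $U \subseteq V(G) \setminus A$ of size $o(|A|)$, $G[A \cup U]$ admits an $H$-factor. After the regularity and blow-up phases cover all of $V(G) \setminus (A \cup U)$ for some small divisible $U$, the absorber swallows $U$ to produce an $H$-factor in $G$. The main obstacle will be executing the divisibility transfer with only a constant $C = C(H)$ of extra minimum-degree slack, rather than the $\alpha n$ enjoyed in Theorem~\ref{thm:AY}: one must prove that the arithmetic conditions packaged into $\hcf(H)$ are precisely sufficient to absorb all divisibility defects using $O_H(1)$ vertices, and one must separately dispatch the near-extremal configurations where $G$ resembles the tight lower-bound construction, for which the generic absorber cannot be built and an explicit structural argument is required.
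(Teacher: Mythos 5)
Theorem~\ref{thm:KO} is a cited result of K\"uhn and Osthus; the present paper does not prove it, and in fact explicitly distinguishes its own method (lattice-based absorption) from the approach of K\"uhn and Osthus, which is based on the Regularity Lemma, the Blow-up Lemma and a detailed extremal/non-extremal dichotomy, \emph{not} on absorption. So your sketch is at best a hybrid between the historical proof and a more modern absorption-flavoured one.

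There is, however, a genuine gap that would sink the plan as written. You apply the degree form of the Regularity Lemma and claim to obtain a reduced graph $R$ with $\delta(R)\ge\bigl(1-1/\chi^*(H)+d\bigr)L$, calling the slack $d$ ``essentially free.'' It is not: as recalled in Section~\ref{sec3.1} of the paper, passing to the reduced graph \emph{costs} about $(d+2\eps)L$, so from the hypothesis $\delta(G)\ge(1-1/\chi^*(H))n+C$ one only gets $\delta(R)\ge\bigl(1-1/\chi^*(H)-d-2\eps\bigr)L$, which is \emph{below} the Hajnal--Szemer\'edi / Shokoufandeh--Zhao thresholds you then want to invoke on $R$. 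With an $\alpha n$ buffer (as in Theorems~\ref{thm:AY}--\ref{thm:ShZh} and in the present paper's Theorem~\ref{main thm}) one can absorb this loss by choosing $d,\eps\ll\alpha$, but with only a constant~$C$ of slack the step fails outright. This is precisely why K\"uhn--Osthus need a stability argument: either the loss can be recovered because $R$ (and hence $G$) is structurally far from the extremal configurations of Propositions~\ref{kom} and~\ref{ko}, or $G$ is close to extremal, in which case the $H$-factor is produced by a bespoke combinatorial argument rather than via regularity. Your final paragraph gestures at ``near-extremal configurations,'' but frames them as an exceptional nuisance for the absorber; in fact the regularity loss afflicts \emph{every} case, and handling it is the technical heart of the theorem. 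The arithmetic role you assign to $\hcf_\chi(H)$ and $\hcf_c(H)$ is correct in spirit (it parallels how they are used in Section~\ref{sec4.2} of this paper to generate transferrals), but that bookkeeping only becomes relevant once the degree-loss problem is resolved.
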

The degree condition in Theorem~\ref{thm:KO} is best possible up to the constant $C$ and there are also graphs $H$ for which the constant $C$ cannot be omitted entirely \cite{KO}. There are some generalisations of Theorem~\ref{thm:KO}. For example, K\"{u}hn, Osthus and
Treglown~\cite{oretypefactor} determined, asymptotically, the Ore-type degree condition
forcing an $H$-factor; Hyde and Treglown~\cite{degreesequencefactor} proved a degree sequence version of Theorem~\ref{thm:KO}. Very recently, Hurley, Joos and Lang~\cite{Hurley2022SufficientCF} recovered and extended the work of K\"{u}hn and Osthus by replacing the $H$-factor with bounded degree graphs with components of sublinear order.
\subsection{Motivation}

As mentioned above, many typical Dirac-type results in extremal graph theory are of the form ``under certain degree conditions,
$G$ has property $\mathcal{P}$''. Once such a result is established, it is natural to ask how
strongly does $G$ possess $\mathcal{P}$? In particular, this has motivated recent trends in the
study of robustness of graph properties (see~\cite{Sudakov}), aiming to strengthen classical results in
extremal graph theory and probabilistic combinatorics. What measures
of robustness can one utilize? In this direction, several different measures of robustness have been explored in
\begin{enumerate}
  \item (sparse) random graphs $G(n, p)$ in the setting of so-called resilience (see e.g. \cite{AS,BCS,BKS,DKMS,KLS0,KLS01,SV});
  \item edge-colored graphs (see e.g. \cite{AG,AJMP,BE,CD,CKPY,CP,CP1,ES,rainbowblowup,KKKO20,Lo,MPS,Shearer,Yuster});
  \item Maker-Breaker games (see e.g. \cite{Beck,BP,CE,Kri,KSSS}).
\end{enumerate}
For further works in this area the reader is referred to a comprehensive survey by Sudakov~\cite{Sudakov}.

Finding spanning subgraphs in edge-colored graphs with certain constraints has also been widely and well studied. An edge-coloring $c$ of a graph is \emph{(globally) $k$-bounded} if every color appears at most $k$ times in the coloring, while $c$ is \emph{locally $k$-bounded} if every color appears at most $k$ times at any given vertex (so $c$ is a proper edge-coloring when $k=1$). A fundamental line of research is to find \emph{rainbow subgraphs} (all edges have distinct colors) in $g$-bounded edge-colorings of graphs and \emph{properly colored subgraphs} (any adjacent edges receive different colors) in locally $\ell$-bounded edge-colorings of graphs. For perfect matchings, Erd\H{o}s and Spencer \cite{ES} proved that any $\frac{n-1}{16}$-bounded edge-coloring of $K_{n,n}$ admits a rainbow perfect matching. Recently, Coulson and Perarnau \cite{CP} considered the sparse version and obtained that there exist $\mu>0$ and $n_0\in \mathbb{N}$ such that if $n\ge n_0$ and $G$ is a \emph{Dirac bipartite graph} on $2n$ vertices, which is a balanced bipartite graph with minimum degree at least a quarter of its order, then any $\mu n$-bounded edge-coloring of $G$ contains a rainbow perfect matching. For Hamilton cycles, a conjecture of Bollob\'as and Erd\H{o}s \cite{BE} states that every locally $\left(\lfloor\frac{n}{2}\rfloor-1\right)$-bounded edge-colored $K_n$ contains a properly colored Hamilton cycle. There is a series of partial results toward this conjecture (see e.g. \cite{AG,CD,Shearer}). In \cite{Lo}, Lo proved that the Bollob\'as--Erd\H{o}s conjecture is true asymptotically. For sparse version, Coulson and Perarnau \cite{CP1} derived that there exist $\mu>0$ and $n_0\in \mathbb{N}$ such that if $n\ge n_0$ and $G$ is a \emph{Dirac graph} on $n$ vertices, which is a graph with minimum degree at least half of its order, then any $\mu n$-bounded edge-coloring of $G$ contains a rainbow Hamilton cycle.
For $H$-factors, Coulson, Keevash, Perarnau and Yepremyan \cite{CKPY} showed that there is a constant $\mu$ such that any $\mu n$-bounded edge-coloring of $G$ with $\delta(G)\ge (1-\frac{1}{\chi^*(H)}+o(1))n$ contains a rainbow $H$-factor.

In this paper, we consider a more general setting of incompatibility systems, which was first proposed by Krivelevich, Lee and Sudakov~\cite{KLS}.
\begin{defn}[Incompatibility system]\label{is}
Let $G=(V,E)$ be a graph.
\begin{itemize}
  \item An \emph{incompatibility system} $\mathcal{F}$ over $G$ is a family $\mathcal{F}=\{F_v\}_{v\in V}$ such that for every $v\in V$, $F_v$ is a family of 2-subsets of edges incident with $v$, i.e. $F_v\subseteq \{\{e,e'\}\in {E\choose 2}: e\cap e'=\{v\}\}$. For simplicity, we often denote the system as $(G,\mathcal{F})$.
  \item For any two edges $e$ and $e'$, if there exists some vertex $v$ such that $\{e,e'\}\in F_v$, then we say that $e$ and $e'$ are \emph{incompatible} at $v$. Otherwise, they are \emph{compatible}. A subgraph $H\subseteq G$ is \emph{compatible} if all pairs of edges are compatible.
  \item For a positive integer $\Delta$, an incompatibility system $(G,\mathcal{F})$ is \emph{$\Delta$-bounded} if for any vertex $v$ and any edge $e$ incident with $v$, there are at most $\Delta$ other edges incident with $v$ that are incompatible with $e$.
\end{itemize}
\end{defn}
Locally $\ell$-bounded edge-colorings can be viewed as transitive $(\ell-1)$-bounded incompatibility systems in which two adjacent edges are incompatible if only they have the same color, and compatible subgraphs generalize the concept of properly colored subgraphs. Note that there is also a similar generalization of $g$-bounded edge-colorings, called \emph{systems of conflicts} (see \cite{CP}). Definition \ref{is} was firstly introduced in \cite{KLS} as a measure of robustness and motivated by two concepts in graph theory. First, it generalizes \emph{transition systems} introduced by Kotzig \cite{Kotzig} in 1968. In our
terminology, a transition system is simply a 1-bounded incompatibility
system. Kotzig's work was motivated by a problem of Nash--Williams on cycle covering of Eulerian graphs (see, e.g. Section 8.7 in \cite{Bondy}).

In \cite{KLS}, Krivelevich, Lee and Sudakov studied the robustness of Hamiltonicity of Dirac graphs with respect to the incompatibility system and derived Theorem \ref{comp H-C}. 

\begin{theorem}[\cite{KLS}]\label{comp H-C}
There exists a constant $\mu>0$ such that the following holds for large enough $n$. For every $n$-vertex Dirac graph $G$ and a $\mu n$-bounded incompatibility system $\mathcal{F}$ defined over $G$, there exists a compatible Hamilton cycle.
\end{theorem}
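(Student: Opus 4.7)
The plan is to combine the absorbing method with a P\'osa-style rotation-extension argument, exploiting that $\mu n$-boundedness only forbids an $O(\mu)$-fraction of local transitions. Throughout, I would fix a hierarchy $\mu \ll \beta \ll \gamma \ll 1$, and work with an incompatibility system $\F$ that is $\mu n$-bounded over a Dirac graph $G$ on $n$ vertices.

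First, I would build a \emph{compatible absorbing path} $P_A$ of size $o(n)$ together with a \emph{reservoir} $R\subseteq V(G)\setminus V(P_A)$ of size $\gamma n$. Call a triple $(x,y,z)$ a \emph{$v$-absorber} if $xy,yz\in E(G)$ are compatible at $y$, and $xv,vz\in E(G)$ are compatible at $v$. The Dirac condition gives $\Omega(n)$ common neighbours of any pair of vertices, while $\F$ being $\mu n$-bounded rules out only $O(\mu n)$ choices per edge, so each $v$ has $\Omega(n^3)$ absorbers. Following the R\"odl--Ruci\'nski--Szemer\'edi scheme, I would select a random family of $\beta n$ absorbers, verify via Chernoff that every vertex lies in many of them, then link the chosen absorbers with short compatible paths through $R$ (using that $R$ itself, sampled randomly, is ``compatibly well-connected''). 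This yields $P_A$ with endpoints $p,q$ such that any $U\subseteq V(G)\setminus V(P_A)$ with $|U|\le\beta^2 n$ can be swallowed into $P_A$ while preserving compatibility, by successively swapping each $y$-substring $xyz$ to $xvz$ for an appropriate absorber.

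Second, in $G' := G-(V(P_A)\cup R)\cup\{p,q\}$ I would find an almost-Hamilton compatible path from $p$ to $q$ via rotation-extension. A rotation along a longest compatible path $v_1\cdots v_\ell$ using an edge $v_\ell v_i$ produces a new path ending at $v_{i+1}$, legal provided $\{v_\ell v_i,v_i v_{i+1}\}\notin F_{v_i}$. Since the fixed edge $v_i v_{i+1}$ forbids at most $\mu n$ rotations, a $(1-O(\mu))$-fraction of rotations remain compatible; choosing $\mu$ small enough, the classical P\'osa booster argument yields linear-size endpoint sets on both sides. The Dirac condition within these endpoint sets provides an edge that closes a cycle; a reservoir vertex then reopens the cycle into a $p$-$q$ path covering all but a small set $U$. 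Iterating (removing a few vertices into $U$ while the degree condition still holds) drives $|U|\le\beta^2 n$, after which the absorbing property of $P_A$ completes a compatible Hamilton cycle.

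The main obstacle I expect is controlling the accumulation of compatibility constraints through a sequence of P\'osa rotations: each rotation introduces a new transition at an internal vertex, and although each individual rotation loses only an $O(\mu)$-fraction of options, a polynomially long rotation sequence could in principle erode the endpoint set. I would address this by either a martingale/Azuma-type concentration on the number of valid rotation targets as the sequence evolves, or by a decoupling trick in which rotations are sampled from a pre-selected ``random'' pool of vertices whose associated forbidden sets are almost disjoint. A secondary subtlety is ensuring that the initial reservoir $R$ and absorbing path $P_A$ can be linked to the near-Hamilton path through \emph{compatible} short detours, which requires that the connection routines tolerate the $\mu n$-local-boundedness; this should follow from the same averaging argument used in the absorber count, provided $\mu$ is chosen sufficiently small relative to $\gamma$.
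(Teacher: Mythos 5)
This theorem is not proved in the paper: it is cited as a result of Krivelevich, Lee and Sudakov~\cite{KLS}, with the only indication of method being the remark that ``their main tool is based on P\'osa's rotation-extension technique.'' So there is no proof in the paper to compare against line by line; the natural comparison is with the KLS strategy.

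Your outline uses a genuinely different decomposition: an absorbing path plus rotation-extension for an almost-spanning compatible path, whereas KLS run a pure rotation-extension argument to produce the Hamilton cycle directly. That difference could in principle be a legitimate alternative, but as written the proposal has a real gap precisely at the part that makes the KLS proof hard. You identify it yourself: a P\'osa rotation sequence repeatedly changes internal transitions of the path, and a single endpoint can easily have far more than $O(\mu n)$ bad rotation targets $v_i$, because the offending pair $\{v_{i-1}v_i,\, v_iv_\ell\}$ lives in $F_{v_i}$, not $F_{v_\ell}$, so the $\mu n$-boundedness at $v_\ell$ gives no direct control over it. Aggregating over $v_i$ only yields an averaging bound, not the per-endpoint bound the rotation argument needs, and a vague appeal to Azuma or to a pre-selected ``random pool'' is not a proof of the required concentration --- this is exactly where the technical substance of the KLS argument lies, and leaving it as a sketch leaves the proof incomplete. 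Two smaller errors should also be fixed: (i) the legality condition you write for a rotation, $\{v_\ell v_i, v_i v_{i+1}\}\notin F_{v_i}$, involves the edge $v_iv_{i+1}$ that is being \emph{removed} from the path; the actual new constraints are $\{v_{i-1}v_i,\, v_iv_\ell\}\notin F_{v_i}$ and $\{v_{\ell-1}v_\ell,\, v_\ell v_i\}\notin F_{v_\ell}$. (ii) ``The Dirac condition gives $\Omega(n)$ common neighbours of any pair of vertices'' is false in general (e.g.\ a balanced complete bipartite graph has vertex pairs with no common neighbour at all); the cubic count of absorbers can still be recovered by a Cauchy--Schwarz/averaging argument, but the stated justification does not hold.
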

Their proof is based on P\'osa's rotation-extension technique. Theorem \ref{comp H-C} settled in a very strong form, a conjecture of H\"aggkvist from 1988 (see Conjecture 8.40 in \cite{Bondy}). They further studied compatible Hamilton cycles in random graphs in \cite{KLS1} and proved that there is a constant $\mu > 0$ such that if $p\gg \frac{\log n}{n}$, then w.h.p. $G=G(n, p)$ contains a compatible Hamilton cycle for every $\mu np$-bounded incompatibility system defined over $G$, which strengthens the result about Hamilton cycles in $G(n,p)$ without restrictions of incompatibility systems.

The concept of incompatibility system appears to provide a new and interesting take on robustness
of graph properties. Krivelevich, Lee and Sudakov \cite{KLS} also suggested looking at how various extremal results can be strengthened in this context.

\subsection{Main result and discussion}
It is natural to consider other compatible spanning structures. We study minimum degree conditions for $H$-factors in incompatibility system. Note that throughout this paper, we always assume $\chi(H)\ge 2$, since $\chi(H)=1$ is a trivial case. For convenience, given constants $\mu,\delta$ and $n\in \mathbb{N}$, an \emph{$(n,\delta,\mu)$-incompatibility system }$(G,\mathcal{F})$ consists of an $n$-vertex graph $G$ with $\delta(G)\ge \delta n$ and a $\mu n$-bounded incompatibility system $\mathcal{F}$ over $G$. Our main result is formally stated as follows.

\begin{theorem}\label{main thm}
Let $h\in \mathbb{N}$ and $H$ be any $h$-vertex graph. For any $\alpha>0$, there exists a constant $\mu>0$ such that for any sufficiently large $n$ with $n\in h\mathbb{N}$, every $(n,1-\frac{1}{\chi^*(H)}+\alpha,\mu)$-incompatibility system $(G,\mathcal{F})$ contains a compatible $H$-factor. In particular, the term of $\alpha$ in the minimum degree condition cannot be omitted when $H$ is a complete $r$-partite graph for $r\ge 3$.
\end{theorem}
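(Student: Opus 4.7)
The plan is to prove Theorem~\ref{main thm} by adapting the absorption method to respect compatibility. The scheme has three parts: (a) build a small \emph{compatible absorber} $A\subseteq V(G)$ of size $o(n)$ such that for every suitably balanced leftover $W$ with $|W|\le \mu' n$, the induced graph $G[A\cup W]$ has a compatible $H$-factor; (b) find a compatible \emph{almost $H$-factor} of $G-A$ missing only $o(n)$ vertices; and (c) combine. The key soft input is that the $\mu n$-bound is local: in any greedy vertex-by-vertex embedding, at each step at most $|E(H)|\cdot \mu n$ candidate vertices can cause a conflict, which is swamped by the available neighbourhood of size roughly $\alpha n-o(n)$ once $\mu\ll \alpha/h^{O(1)}$. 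This yields a \emph{compatible embedding lemma}: in any vertex set whose common neighbourhoods are sufficiently large, one can greedily place a compatible copy of $H$ and, more generally, extend a partial compatible $H$-tiling.

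For part (b), I would apply Szemer\'edi's regularity lemma to $G$, pass to the reduced graph $R$ whose minimum degree is at least $(1-1/\chi^*(H)+\alpha/2)|R|$, and use Theorem~\ref{thm:ShZh} (or a Koml\'os-style fractional $H$-tiling) to find an $H$-tiling covering almost all clusters of $R$. A compatible blow-up converts this to a compatible $H$-tiling in $G$: within each super-regular $|H|$-tuple one greedily peels off compatible copies of $H$ using the compatible embedding lemma, discarding at each step the $O(\mu n)$ vertices that would create a conflict. The outcome is a compatible $H$-tiling of $G-A$ covering all but $\eps n$ vertices.

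Part (a) is the lattice-based compatible absorber. Following the Han-type framework, I would first partition $V(G)$ into ``colour classes'' $V_1,\dots,V_{\chi(H)}$ via an auxiliary $K_{\chi(H)}$-tiling with large minimum degree, and study the \emph{shift lattice} $L\subseteq \mathbb{Z}^{\chi(H)}$ of migration vectors realisable by compatible absorbers --- that is, pairs of compatible $H$-tilings on disjoint vertex sets whose class-size vectors differ by a prescribed amount. Under $\delta(G)\ge (1-1/\chi^*(H)+\alpha)n$, a counting argument should certify that $L$ equals the full lattice determined by $\hcf(H)$, so that every deficit vector arising from the leftover of part (b) can be corrected. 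A random selection of $o(n)$ absorbers, combined with a union bound that uses the $\mu n$-bound, then produces $A$ with the desired absorbing property; since each absorber uses only $O_h(1)$ edges, the compatibility bookkeeping costs at most a factor $1+O(\mu)$.

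The main obstacle is establishing fullness of the shift lattice in the compatible setting. In K\"uhn--Osthus this is witnessed by counting pairs of $H$-tilings differing by a fixed vector; here each such pair must moreover be compatible. A single absorber involves $O_h(1)$ edges, so the $\mu n$-bound destroys only an $O(\mu)$-fraction of the candidate pairs, and for $\mu$ small enough the lattice retains its K\"uhn--Osthus shape. The dichotomy between $\chi(H)$ and $\chi_{cr}(H)$ emerges just as in \cite{KO}: the critical chromatic number suffices when $\hcf(H)=1$; otherwise one needs $\chi(H)$ to handle the coset-divisibility arising from the almost-tiling step, which I would correct by absorbing a preliminary ``tuning'' tiling before deploying the main absorber. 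In the extremal regime, where $G$ closely resembles the Tur\'an blow-up forcing tightness in Theorem~\ref{thm:KO}, I would argue directly via a compatible stability analysis, exploiting that the incompatibility system introduces only an $O(\mu)$-perturbation throughout so that the standard stability proofs transfer.
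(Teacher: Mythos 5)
Your overall strategy --- absorption via a compatible absorber and lattice control, plus a regularity-based compatible almost-tiling, all underpinned by the observation that the $\mu n$-bound only removes an $O(\mu)$ fraction of greedy extensions at each step --- is the same skeleton the paper follows. The one structural deviation in the upper-bound half is how the lattice is built: you propose to partition $V(G)$ via an auxiliary $K_{\chi(H)}$-tiling and then count shift vectors, whereas the paper instead defines a reachability partition (each part internally closed under $H$-connectors) and then merges parts one at a time via transferrals in the associated index-vector lattice; both live in the Han/Keevash--Knox--Mycroft family, but the paper's route never needs a preliminary ``tuning'' tiling to handle the $\hcf$ dichotomy --- it encodes the dichotomy directly in whether the lattice of robust index vectors contains the relevant transferral, case-split on $\hcf_\chi(H)$ and $\hcf_c(H)$. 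Your ``compatible embedding lemma'' also corresponds to a genuine lemma in the paper (a compatible counting lemma giving $cm^h$ compatible copies of a complete multipartite graph inside a regular $r$-tuple), and your heuristic justification for it is the right one.

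There are, however, two real problems. First and most concretely, you do not address the \emph{``in particular''} clause at all: the theorem also asserts that for complete $r$-partite $H$ with $r\ge 3$ the term $\alpha$ cannot be dropped, and this requires an explicit lower-bound construction --- take the extremal complete $\chi(H)$-partite graph $G_0$ with no $H$-factor, add a sparse triangle-free graph of maximum degree $\le\mu n$ inside each part to boost the minimum degree, and declare two edges $vu,vw$ incompatible at $v$ exactly when $uw$ is one of the added intra-part edges. This forces every compatible copy of $H$ to be ``crossing'' and hence carries over the non-existence of an $H$-factor from $G_0$. Nothing in your proposal produces or even gestures at such a construction, so that half of the statement is simply unproved. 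Second, your final paragraph about arguing ``directly via a compatible stability analysis'' in the extremal regime is a red herring: because the theorem has the linear slack $\alpha n$ in the degree condition, the host graph never approaches the Tur\'{a}n-type extremal configuration and no stability step is needed (the paper has none); a genuine compatible stability analysis would be substantially harder than anything else in the argument and is not something one should wave at without evidence, so if you did intend it as a load-bearing step it would constitute a gap in its own right.
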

Towards the `in particular' part in the statement, an obvious question is for which graphs the error term $\alpha n$ in the minimum degree condition can be replaced by a constant term.
\subsubsection{A space barrier}
For $r\ge3$ and $h_1,\ldots,h_r\ge 1$,
we use $K_r(h_1,h_2,\ldots,h_r)$ to denote the complete $r$-partite graph with each part of size $h_i$. In particular, when $h_i=s$ for any $i\in [r]$, we write $K_r^s$ for $K_r(s,s,\ldots,s)$. 
Now we show that the minimum degree condition is in some sense tight by the following result.
\begin{prop}\label{extremal graph}
Let $H=K_r(h_1,h_2,\ldots,h_r)$, $h=\sum_{i\in[r]} h_i$ and $n\in h\mathbb{N}$. For any $0<\mu< \frac{{\chi_{cr}(H)}-r+1}{\chi_{cr}(H)}-\frac{r-1}{n}$, there exists an $(n,1-\frac{1}{\chi^*(H)}+\frac{\mu}{2},\mu)$-incompatibility system $(G,\mathcal{F})$ which contains no compatible $K_r(h_1,h_2,\ldots,h_r)$-factor.
\end{prop}
To prove Proposition \ref{extremal graph}, we need two results as follows which provide lower bound constructions for the minimum degree threshold forcing an $H$-factor.
\begin{prop}\emph{\cite{kom00}}\label{kom}
For every graph $H$ with $2\le\chi(H)=:r$ and every integer $n$ that is divisible by
$|H|$, there exists a complete $r$-partite graph $G$ of order $n$ with minimum degree $(1-\frac{1}{\chi_{cr}(H)})n-1$ which does not contain an $H$-factor.
\end{prop}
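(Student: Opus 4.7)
The plan is to construct $G$ explicitly as a complete $r$-partite graph in which one part is made marginally too small to accommodate an $H$-factor. Write $r := \chi(H)$, $h := |H|$, and $\sigma := \sigma(H)$, so that $\chi_{cr}(H) = (r-1)h/(h-\sigma)$. The single conceptual ingredient is the following elementary observation: in any proper $r$-colouring of $H$, every colour class has size at least $\sigma$. Indeed, $\sigma(H)$ is by definition the minimum, over all proper $r$-colourings $c$ of $H$, of the smallest class size $h_1^c$, so $h_1^c \ge \sigma$ for every such $c$; the remaining classes are at least as large. Since $\chi(H) = r$, every proper $r$-colouring of $H$ uses all $r$ colours, so every class is present.

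Concretely, I would pick $k \in \{1, \ldots, r-1\}$ satisfying $(h-\sigma)n/h + k \equiv 0 \pmod{r-1}$ (such $k$ exists since $\{1,\ldots,r-1\}$ is a complete residue system modulo $r-1$), and set
\[
|V_1| := \frac{\sigma n}{h} - k, \qquad |V_2| = \cdots = |V_r| := \frac{(h-\sigma)n/h + k}{r-1}.
\]
Since $n \in h\mathbb{N}$ and $k$ is chosen for divisibility, these are nonnegative integers (for $n$ sufficiently large) summing to $n$; a short calculation using $r\sigma \le h$ confirms that $V_1$ is in fact the smallest part. The maximum part size is therefore $|V_r|$, and
\[
\delta(G) \;=\; n - |V_r| \;=\; n - \frac{(h-\sigma)n}{(r-1)h} - \frac{k}{r-1} \;\ge\; \left(1 - \frac{1}{\chi_{cr}(H)}\right) n - 1,
\]
using $k/(r-1) \le 1$.

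To rule out an $H$-factor, suppose for contradiction that $G$ admits one, consisting of $n/h$ vertex-disjoint copies of $H$. Since $G$ is complete $r$-partite, the intersection of each embedded copy with the parts $V_1,\ldots,V_r$ induces a proper $r$-colouring of $H$, and by the opening observation each copy meets every $V_i$ in at least $\sigma$ vertices. Summing over the $n/h$ copies yields $|V_1| \ge \sigma\cdot(n/h) = \sigma n/h$, contradicting $|V_1| = \sigma n/h - k < \sigma n/h$. There is no deep obstacle here: the whole content is the identification of the $\sigma$-barrier, and the only technical task is the integrality bookkeeping, absorbed into the choice of $k$, which is precisely what accounts for the additive $-1$ in the minimum-degree bound.
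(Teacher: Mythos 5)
Your construction and argument are correct, and they embody exactly the standard Koml\'os-style lower bound: make the smallest colour class slightly too small to host $\sigma(H)\cdot n/|H|$ vertices. The paper itself does not prove this proposition but cites it to \cite{kom00}, so there is no in-text proof to compare against; your argument is essentially the canonical one. The key ingredients -- that every proper $r$-colouring of $H$ uses all $r$ classes and has every class of size at least $\sigma(H)$, hence $r\sigma(H)\le |H|$, and that in a complete $r$-partite host each copy of $H$ in an $H$-factor must deposit at least $\sigma(H)$ vertices in every part -- are stated and used correctly, and the integrality choice of $k$ is a clean way to handle divisibility while keeping the loss in the minimum degree at most $1$.

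Two small points worth flagging. First, you hedge with ``for $n$ sufficiently large'' to guarantee $|V_1|=\sigma n/h - k \ge 1$, whereas the proposition as stated speaks of \emph{every} integer $n\in|H|\mathbb{N}$; for very small $n$ (e.g.\ $n=|H|$ with $\sigma(H)<k$) your $|V_1|$ can be nonpositive, so strictly speaking a tiny adjustment or a remark restricting to $n$ large enough would be needed -- in the paper's application only large $n$ matter, so this is cosmetic. Second, the paper's follow-up remark asserts that all parts of the extremal $G$ lie in the window $\bigl[\tfrac{\chi_{cr}(H)+1-r}{r}n,\ \lceil n/\chi_{cr}(H)\rceil+1\bigr]$; in the borderline case $r\sigma(H)=|H|$ (i.e.\ $\chi_{cr}(H)=\chi(H)$) your $|V_1|$ falls just below the lower end of that window by the constant $k$. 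This does not affect the validity of your proof of the proposition itself, but if you wanted your construction to double as the $G_0$ used in the paper's subsequent space-barrier argument, you would want to check that window separately (or simply cite Koml\'os's construction as the paper does).
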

Since the above $n$-vertex graph $G$ is complete $r$-partite with minimum degree $(1-\frac{1}{\chi_{cr}(H)})n-1$, the size of the largest vertex class of $G$ is $\frac{n}{{\chi_{cr}(H)}}+1$, while the size of the smallest vertex class is at least $n-(r-1)(\frac{n}{{\chi_{cr}(H)}}+1)=\frac{{\chi_{cr}(H)}-r+1}{\chi_{cr}(H)}n-r+1$.
\begin{prop}\emph{\cite{KO}}\label{ko}
Let $H$ be a graph with $3\le \chi(H)=:r$ and let $h\in \mathbb{N}$. Let $G$
be the complete $r$-partite graph of order $k|H|$ whose vertex classes $V_1,\ldots,V_r$
satisfy $|V_1|=
\lfloor k|H|/r\rfloor+1, |V_2|=\lceil k|H|/r\rceil-1$ and
$\lfloor k|H|/r\rfloor\le|V_i|\le\lceil k|H|/r\rceil$
for all $i\ge3$. So $\delta(G)=\lceil(1-\frac{1}{\chi(H)})|G|\rceil-1$. If $\hcf(H)\neq1$, then $G$ does not contain an $H$-factor.
\end{prop}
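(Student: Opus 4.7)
The plan is a short divisibility argument by contradiction. Suppose that $G$ contains an $H$-factor $H^1,\dots,H^k$. The key structural observation is that since $G$ is complete $r$-partite, for any copy $H^j$ the map sending each vertex to the index of its containing part is a proper $r$-coloring of $H$; because $\chi(H)=r$, such a coloring must use all $r$ colors, so the induced coloring $c^j$ lies in $\mathcal{C}(H)$.

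Write $a^j_i:=|V(H^j)\cap V_i|$, so the multiset $\{a^j_1,\dots,a^j_r\}$ equals the multiset of color class sizes of $c^j$. Set $d:=\hcf_{\chi}(H)$; since $\chi(H)\ge 3$ and $\hcf(H)\ne 1$, we have $d\ne 1$ (with the convention $d=\infty$ allowed). By the definition of $\hcf_{\chi}$, for each $c\in\mathcal{C}(H)$ every consecutive difference of the sorted class sizes $h^c_1\le\cdots\le h^c_r$ is divisible by $d$, hence any two color class sizes of $c$ are congruent modulo $d$. Applied to each $c^j$, this gives $a^j_i\equiv a^j_{i'}\pmod{d}$ for all $i,i'\in[r]$ and all $j\in[k]$. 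Summing over $j$ and using $\sum_j a^j_i=|V_i|$, one obtains
\[
|V_i|-|V_{i'}|\equiv 0\pmod{d}\qquad\text{for all } i,i'\in[r].
\]

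To close the contradiction, I would then show that the prescribed part sizes always contain two values differing by exactly $1$, via a two-line case distinction on whether $r\mid k|H|$. If $r\mid k|H|$ then $|V_1|=k|H|/r+1$ while every $|V_i|$ for $i\ge 3$ equals $k|H|/r$, so $|V_1|-|V_3|=1$ (here $r\ge 3$ is used crucially to guarantee that $V_3$ exists). If $r\nmid k|H|$, setting $q:=\lfloor k|H|/r\rfloor$, one has $|V_1|=q+1$ and $|V_2|=\lceil k|H|/r\rceil-1=q$, so $|V_1|-|V_2|=1$. In either case $d\mid 1$, contradicting $d\ne 1$; the limiting case $d=\infty$ reads $|V_i|=|V_{i'}|$ for all $i,i'$, which is refuted by the same pair of unequal parts.

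I do not foresee any genuine obstacle here: the argument is essentially a divisibility audit of what $\hcf(H)\ne 1$ forces on the part-size vector of any complete $r$-partite host. The only small points that require care are verifying that the coloring induced by each copy is genuinely a member of $\mathcal{C}(H)$, i.e.\ has all $r$ color classes nonempty (which is automatic from $\chi(H)=r$), and handling the finite $d$ and $d=\infty$ cases in parallel.
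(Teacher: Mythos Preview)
Your argument is correct. The paper does not give its own proof of this proposition; it is quoted from \cite{KO} and used only as a black box in the space-barrier construction. Your divisibility audit is exactly the standard argument: each copy of $H$ in the factor inherits a proper $r$-colouring from the $r$-partition of $G$, so for $d:=\hcf_\chi(H)$ all part sizes $|V_i|$ must be pairwise congruent modulo $d$, while the stipulated sizes always exhibit a pair differing by exactly $1$ (using $r\ge 3$ in the case $r\mid k|H|$). The handling of the $d=\infty$ case and the verification that the induced colouring genuinely lies in $\mathcal{C}(H)$ are both fine.
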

\begin{proof}[\textbf{Proof of Proposition \ref{extremal graph}}]
Let $G_0$ be an $n$-vertex complete $r$-partite graph with $\delta(G_0)\ge(1-\frac{1}{\chi^*(H)})n-1$ such that $G_0$ contains no $H$-factors. Indeed such $G_0$ can be obtained from Proposition~\ref{kom} or Proposition~\ref{ko}. Write $V_1,\ldots,V_r$ for the $r$ parts of $G_0$ and inside every part $V_i$ of $G_0$, we add a spanning bipartite subgraph with minimum degree at least $\frac{\mu n}{2}+1$ and maximum degree at most $\mu n$. Denote the resulting graph by $G$. Hence, $\delta(G)\ge \left(1-\frac{1}{\chi^*(H)}+\frac{\mu}{2}\right)n$ and for every $i\in[r]$, $G[V_i]$ is a triangle-free graph with $\delta(G[V_i])\ge \frac{\mu n}{2}+1$ and $\Delta(G[V_i])\le \mu n$.
Now we define an incompatibility system $\mathcal{F}$ over $G$. For any two different parts $V_i,V_j$ of $G$, let $v$ be any vertex in $V_i$ and $u,w$ be any two different vertices in $V_j$. If $uw$ is an edge in $G[V_j]$, then let $vu$ and $vw$ be incompatible at $v$. Since $\Delta(G[V_j])\le \mu n$, $\mathcal{F}$ is $\mu n$-bounded. 

Now it remains to verify that there is no compatible $K_r(h_1,h_2,\ldots,h_r)$-factor. Before that, for any subgraph $F\subseteq G$ we define the \emph{index vector} $\textbf{i}_{F}=(x_1,\ldots,x_r)$ by choosing $x_i=|V(F)\cap V_i|, i\in[r]$. We claim that for every compatible copy of $K_r(h_1,h_2,\ldots,h_r)$ in $G$, its index vector is an $r$-tuple of $h_1,h_2,\ldots,h_r$. In fact, assume that there is a compatible copy of $K_r(h_1,h_2,\ldots,h_r)$ in $G$, denoted by $K$, whose index vector is not a permutation of $h_1,h_2,\ldots,h_r$. Then $K$ has an edge in $G[V_i]$ for some $i\in [r]$, say $uw$. Since $r\ge 3$, $uw$ is contained in a triangle of $K$, say $vuw$. As $G[V_i]$ is triangle-free, one can observe that $v$ is in some $V_j$ for $j\neq i$ and by assumption $vu$ and $vw$ are compatible at $v$, which contradicts the definition of $\mathcal{F}$. Hence, since $G_0$ contains no $K_r(h_1,h_2,\ldots,h_r)$-factor, it follows from the aforementioned claim that $G$ contains no compatible $K_r(h_1,h_2,\ldots,h_r)$-factor as well.
\end{proof}


The rest of the paper is organised as follows. In Section~\ref{sec2}, we set up some basic notation and outline the proof of Theorem \ref{main thm}. Then we state two crucial results (Lemmas \ref{abs} and \ref{almost}) that decode the proof of Theorem \ref{main thm}. Sections \ref{sec3} and \ref{sec4} are devoted to proving Lemmas \ref{abs} and \ref{almost}, respectively.
\section{Notation and preliminaries}\label{sec2}
For a graph $G$, we use $e(G)$ to denote the number of edges of $G$.
For a graph $G=(V,E)$ and $k$ pairwise disjoint vertex subsets $U_1,\ldots, U_k\subseteq V$, we use $G[U_1,\ldots, U_k]$ to denote the $r$-partite subgraph induced by $U_1,\ldots, U_r$.
In the proofs, if we choose $a\ll b$, then this means that for any $b>0$, there exists $a_0>0$ such that for any $a<a_0$ the subsequent statement holds. Hierarchies of other lengths are defined similarly.

\subsection{Proof strategy and main tools}\label{sec2.1}
Our proof uses the absorption method, pioneered by the work of R\"odl, Ruci\'nski and Szemer\'edi~\cite{RRS} on perfect matchings in hypergraphs. In recent years, the method has become an extremely important tool for studying the existence of spanning structures in graphs, digraphs and hypergraphs. Before its appearance a well-known systematic way in this area is the blow-up lemma due to Koml\'os, S\'ark\"ozy and Szemer\'edi~\cite{KSSblowup}, which has successfully been adopted by many scholars for various classical results \cite{AY,KO}. It is worth noting that Glock and Joos \cite{rainbowblowup} developed a rainbow version of the blow-up lemma, which enables the systematic study of rainbow embeddings of bounded degree spanning subgraphs~\cite{joos_2020,KKKO20}. However, there has been no such variant for incompatibility systems. Instead, we utilize the absorption framework and develop a different counting argument for embedding compatible graphs under certain pseudorandomness conditions (see Lemma~\ref{K_h^r}).

The general idea of absorption is to split the problem of finding perfect tilings into two subproblems. The first major task is to define and find an absorbing set in the host graph which can `absorb' left-over vertices.
We will start with the notion of absorbers needed in our proof.
\begin{defn}[Absorber]\label{defabs}
Let $H$ be an $h$-vertex graph, $G$ be an $n$-vertex graph and $\mathcal{F}$ be an incompatibility system over $G$.
For any $h$-set $S\subseteq V(G)$ and integer $t$, we say that a set $A_S\subseteq V(G)\setminus S$ is an \emph{$(H,t)$-absorber} for $S$ if $|A_S|\le h^2t$ and both $G[A_S]$ and $G[A_S\cup S]$ contain compatible $H$-factors.
\end{defn}

The first task is handled as follows.
\begin{defn}[Absorbing set]
Let $H$ be an $h$-vertex graph, $G$ be an $n$-vertex graph, $\mathcal{F}$ be an incompatibility system over $G$ and $\xi$ be a constant.
A set $A\subseteq V(G)$ is called a $\xi$-\emph{absorbing set} (for $V(G)$) if for any set $R\subseteq V(G)\setminus A$ with $|R|\le \xi n$ and $|A\cup R|\in h\mathbb{N}$, the graph $G[A\cup R]$ contains a compatible $H$-factor.
\end{defn}

\begin{lemma}[Absorbing lemma]\label{abs}
Let $H$ be an $h$-vertex graph with $\chi(H)\ge 2$. For any $\alpha,\sigma>0$, there exist $\mu,\xi>0$ such that for any sufficiently large $n$, if $(G,\mathcal{F})$ is an $\left(n,1-\frac{1}{\chi^*(H)}+\alpha,\mu\right)$-incompatibility system, then $G$ contains a $\xi$-absorbing set $A$ of size at most $\sigma n$.
\end{lemma}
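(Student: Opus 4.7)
My plan is to follow the probabilistic absorption framework of R\"odl--Ruci\'nski--Szemer\'edi, adapted to the incompatibility setting. Fix a hierarchy $\mu \ll \xi \ll \sigma,\alpha,1/h$ and an auxiliary constant $t=t(H,\alpha)$ governing the absorber size. The proof splits into two stages: (i) a counting lemma guaranteeing that every $h$-set $S\subseteq V(G)$ admits $\Omega(n^{h^2 t})$ compatible $(H,t)$-absorbers, and (ii) a random selection producing a small absorbing set $A$ of size at most $\sigma n$ inside which every $h$-set still has enough disjoint absorbers to absorb any leftover set $R$.

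For stage (i), I would start from the ``uncoloured'' lattice-based absorber construction that underlies K\"uhn--Osthus's proof of Theorem~\ref{thm:KO}, which uses the hypothesis $\delta(G)\ge(1-1/\chi^*(H)+\alpha)n$ to build $(H,t)$-absorbers by gluing together a bounded collection of copies of $H$ via ``swap'' gadgets. To upgrade each step to the compatible setting, I would exploit the fact that the incompatibility system $\mathcal{F}$ is extremely sparse relative to the supply of copies of $H$: each edge $e=uv$ is incompatible with at most $\mu n$ other edges at each endpoint, so the number of copies of $H$ in $G$ that contain a fixed incompatible pair is at most $O(\mu n \cdot n^{h-3})$, while the minimum degree condition guarantees $\Omega(n^{h-1})$ total copies of $H$ through any fixed vertex. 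Summing over the $O(1)$ pairs of edges in $H$, at most an $O(\mu)$-fraction of the candidate copies of $H$ contain an incompatible pair, so sufficiently many compatible copies survive. Iterating this ``discard the incompatible copies'' step through each of the constantly many gadgets in the lattice-based construction and taking $\mu$ small enough yields at least $\beta n^{h^2 t}$ compatible $(H,t)$-absorbers for every $h$-set $S$, where $\beta=\beta(H,\alpha)>0$.

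For stage (ii), include each vertex of $G$ in a random set $A_0$ independently with probability $p:=\sigma/(2 h^2 t)$. Chernoff's inequality gives $|A_0|\le\sigma n/2$ with high probability, while the expected number of compatible $(H,t)$-absorbers for a fixed $S$ lying entirely inside $A_0$ is at least $\beta(np)^{h^2 t}$, which is polynomially large in $n$. A second concentration estimate combined with a union bound over the $\binom{n}{h}$ choices of $S$ shows that simultaneously every $h$-set has at least $4\xi n$ absorbers inside $A_0$. A Markov step then lets us delete at most $\sigma n/2$ further vertices from $A_0$ to eliminate all pairs of intersecting absorbers, producing a set $A$ with $|A|\le\sigma n$ that contains at least $3\xi n$ pairwise vertex-disjoint absorbers for every $h$-set. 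Given any $R\subseteq V(G)\setminus A$ with $|R|\le\xi n$ and $|A\cup R|\in h\mathbb{N}$, partition $R$ into blocks $T_1,\ldots,T_k$ of size $h$ with $k\le\xi n/h$ and greedily pick disjoint absorbers $A_{T_i}\subseteq A$: at each stage at most $h^2 t\cdot k<\xi n$ vertices of $A$ have been used, far fewer than $3\xi n$, so some absorber remains. Replacing the compatible $H$-factor on each $A_{T_i}$ by the compatible $H$-factor on $A_{T_i}\cup T_i$ and retaining the pre-existing compatible $H$-factors on the unused absorbers yields a compatible $H$-factor of $G[A\cup R]$.

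The main obstacle will be stage (i): producing absorbers that respect the dichotomy encoded in $\chi^*(H)$. In the critical-chromatic regime ($\hcf(H)=1$, $\chi^*(H)=\chi_{cr}(H)$) the gadgets exploit unequal-part $\chi(H)$-colourings of $H$, while in the chromatic regime ($\chi^*(H)=\chi(H)$) they rely on equal-part colourings; in each case one must maintain a constant-fraction supply of compatible copies of $H$ across a bounded chain of selection steps. Calibrating $\mu$ small enough that this chain does not destroy the positive-density supply of compatible absorbers, while simultaneously respecting the lattice-structural requirements, is the delicate technical point of the proof.
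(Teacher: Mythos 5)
Your stage (ii) is a standard R\"odl--Ruci\'nski--Szemer\'edi random-selection argument and your ``discard the $O(\mu)$-fraction of bad copies'' counting intuition is genuinely the right way to deal with the sparsity of $\mathcal{F}$ (it is essentially how the paper's own embedding lemma, Lemma~\ref{K_h^r}, is proved). But stage (i) --- the actual production of $\Omega(n^{h^2t})$ compatible $(H,t)$-absorbers for every $h$-set $S$, uniformly in $S$ and respecting the $\chi^*(H)$ dichotomy --- is where the real content lies, and your proposal leaves it essentially unresolved. You propose to lift K\"uhn--Osthus's absorber construction, but that construction is \emph{not} lattice-based; it is built on the blow-up lemma, and the paper explicitly notes that no compatible analogue of the blow-up lemma exists. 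Your one-paragraph acknowledgement that ``the gadgets exploit unequal-part colourings / equal-part colourings \dots\ is the delicate technical point'' is precisely the gap: you have not explained how to build a single swap gadget that works for a completely arbitrary $h$-set $S$ under the degree condition $\delta(G)\ge(1-1/\chi^*(H)+\alpha)n$, nor how the $\hcf$-conditions encoded in $\chi^*(H)$ enter the construction.

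The paper resolves this very differently, and the difference is structural, not cosmetic. Rather than a one-shot supersaturation estimate, it uses the \emph{closedness} framework of Lo--Markstr\"om and the lattice-based merging of Han and Keevash--Knox--Mycroft: it first shows every vertex is $1$-reachable to $\Omega(n)$ others (Lemma~\ref{1-reach}, via the counting Lemma~\ref{K_h^r} on a regular tuple found in the reduced graph), then builds a partition into closed classes (Lemma~\ref{partition lem}), then iteratively merges classes by exhibiting transferrals $\mathbf{u}_i-\mathbf{u}_j$ in the robust lattice $L^{\beta}_{\mathcal{P}}(G)$, with a careful case distinction according to whether $\chi^*(H)=\chi(H)$ or $\chi_{cr}(H)$ and whether $\hcf_\chi(H)$, $\hcf_c(H)$ are $1$ or $2$. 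This yields $(H,\beta n,t)$-closedness of the whole vertex set (Lemma~\ref{partition into B and U}), from which $\Omega(n)$ \emph{vertex-disjoint} absorbers per $h$-set follow cheaply (Lemma~\ref{closed}), and finally the absorbing set is assembled not by naive random sampling but by Montgomery's template-based method (Lemma~\ref{abs set}, via \cite{chang2020factors}), which only needs linearly many disjoint absorbers rather than the $n^{\Theta(1)}$ supersaturated counting your plan requires. In short: your high-level picture of ``sparse incompatibility is absorbed by counting'' is right, but you are missing the closedness/lattice machinery that is the substance of the paper's proof, and the construction you defer to (K\"uhn--Osthus via blow-up) is exactly the one the authors say cannot be ported.
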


The second major task in  absorption arguments for perfect tilings is to find a tiling that covers most of the vertices, leaving just a small portion of vertices uncovered. We will sometimes call such a tiling an \emph{almost perfect tiling} or an \emph{almost cover}.

\begin{lemma}[Almost cover]\label{almost}
Let $H$ be an $h$-vertex graph with $\chi(H)\ge 2$. For any $\alpha,\tau>0$, there exists $\mu>0$ such that for any sufficiently large $n$, if $(G,\mathcal{F})$ is an $\left(n,1-\frac{1}{\chi_{cr}(H)}+\alpha,\mu\right)$-incompatibility system, then there exists a compatible $H$-tiling covering all but at most $\tau n$ vertices of $G$.
\end{lemma}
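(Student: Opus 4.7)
The plan is to combine Szemer\'edi's regularity lemma with the Shokoufandeh--Zhao tiling theorem applied to the reduced graph, and then to realize each ``reduced-graph'' copy of $H$ as a dense block of \emph{compatible} $H$-copies in $G$ via a counting/embedding argument. Fix a hierarchy $1/n \ll \mu \ll 1/T_0 \ll \eps \ll d \ll \tau,\alpha,1/h$ and apply the degree form of the regularity lemma to $G$ with parameters $(\eps,d)$, obtaining an $\eps$-regular partition $V_0,V_1,\ldots,V_t$ with $T_0\le t\le T$, $|V_0|\le \eps n$, and $|V_1|=\cdots=|V_t|=m$. Let $R$ be the reduced graph on $[t]$ whose edges are the $\eps$-regular pairs of density at least $d$. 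A standard computation (discarding degrees lost to $V_0$, to irregular pairs and to sparse pairs) yields $\delta(R)\ge \bigl(1-1/\chi_{cr}(H)+\alpha/2\bigr)t$.

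Next, apply Theorem~\ref{thm:ShZh} to $R$ (for $T_0$ large enough to absorb the $5|H|^2$ additive constant) to obtain an $H$-tiling $\mathcal{T}_R$ of $R$ that covers all but at most $5h^2$ vertices. Each copy $H'\in\mathcal{T}_R$ selects $h$ clusters $V_{i_1},\ldots,V_{i_h}$ whose induced subgraph in $G$ is a blow-up of $H$, in the sense that for every edge $ab\in E(H)$ the pair $(V_{i_a},V_{i_b})$ is $\eps$-regular of density at least $d$. A standard super-regularisation step deletes a $\sqrt{\eps}$-fraction of atypical vertices from each cluster, leaving sets $V_{i_a}'\subseteq V_{i_a}$ with $|V_{i_a}'|\ge(1-\sqrt\eps)m$ on which the restricted pairs are $(2\sqrt\eps,d/2)$-super-regular on every edge of $H'$.

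Inside each such super-regular blow-up of $H$ we invoke the counting/embedding result flagged in Section~\ref{sec2.1} (Lemma~\ref{K_h^r}): since $\mathcal{F}$ is $\mu n$-bounded with $\mu$ chosen far smaller than $d$, that lemma produces a \emph{compatible} $H$-tiling covering all but a negligible fraction of the $hm'$ vertices of the blow-up. Concatenating these tilings across all $H'\in\mathcal{T}_R$ yields the desired compatible $H$-tiling of $G$. The uncovered vertices of $G$ come from four sources: $V_0$ (contributing at most $\eps n$); the clusters missed by $\mathcal{T}_R$ (contributing at most $5h^2 m\le 5h^2 n/T_0$); the super-regularisation trims ($\le h\sqrt\eps\cdot n$); and the small tails inside each blow-up ($\le \eta\cdot n$ for some $\eta=\eta(\mu,\eps,d)\to 0$). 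By choosing the hierarchy appropriately, each contribution is at most $\tau n/4$, giving a total deficit of at most $\tau n$ as required.

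The main obstacle is the compatible embedding inside a single super-regular blow-up, i.e.\ Lemma~\ref{K_h^r}. Without an incompatibility system, a blow-up-lemma or greedy-counting argument would finish the job immediately, but no blow-up lemma is available in this context. The substitute is a direct counting argument: the $\mu n$-boundedness of $\mathcal{F}$ forbids, at each vertex $v$ and each edge $uv$ incident to it, at most $\mu n$ further edges through $v$, so only an $O(h^2\mu)$-fraction of potential copies of $H$ through any given vertex are spoilt by incompatibility. The Kruskal--Katona-style or regularity-based lower bound on the number of copies of $H$ in a super-regular $h$-partite graph is therefore only diminished by a multiplicative factor $1-O(h^2\mu)$, which is enough to iterate a greedy (or nibble) selection to an almost-perfect compatible tiling. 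Controlling the accumulation of conflicts across the many greedy steps, and maintaining super-regularity at each step, is where essentially all of the technical work of the proof concentrates.
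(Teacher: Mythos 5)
Your overall strategy is the same as the paper's up to one crucial point, and that point is where the argument breaks. You apply Shokoufandeh--Zhao to the reduced graph with $H$ itself and obtain an $H$-tiling of $R$, so that each tile selects $h$ clusters forming a \emph{blow-up of $H$} (with regular pairs only along the edges of $H$). You then claim to finish by invoking Lemma~\ref{K_h^r}. But Lemma~\ref{K_h^r} counts compatible copies of the \emph{complete} multipartite graph $K_r(h_1,\ldots,h_r)$ inside $r$ pairwise-regular clusters; it says nothing about a blow-up of a general $H$. Unless $H$ happens to be complete multipartite, the $h$ clusters coming from a copy of $H$ in $R$ are not pairwise regular (non-edges of $H$ give you no information about the corresponding cluster pairs, and vertices of $H$ in the same colour class sit in clusters with no known regularity between them), so the hypotheses of Lemma~\ref{K_h^r} simply fail. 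What you would need instead is a ``compatible'' analogue of the full graph counting lemma, valid for $H$-blow-ups where only edges of $H$ correspond to regular pairs; that lemma is not in the paper, you do not state or prove it, and proving it is not a formality (the induction in Lemma~\ref{K_h^r} is tailored to the complete multipartite case).

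The paper avoids this obstacle precisely by not tiling $R$ with $H$. It replaces $H$ by its bottle-graph $B = K_r\bigl((r-1)\sigma(H),\, h-\sigma(H),\ldots,h-\sigma(H)\bigr)$, which is complete $r$-partite, satisfies $\chi_{cr}(B)=\chi_{cr}(H)$, and contains an $H$-factor. Shokoufandeh--Zhao applied with $B$ still gives the right threshold, and each $B$-tile of $R$ now yields a genuine complete $r$-partite cluster structure where Lemma~\ref{K_h^r} applies as stated. The subdivision of clusters into ``legal families'' then handles the uneven part sizes of $B$. Your proposal, with $H$ in place of $B$, removes exactly the feature that makes the counting lemma usable. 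To repair it, you would either have to switch to the bottle-graph (recovering the paper's proof) or first extend Lemma~\ref{K_h^r} to arbitrary $H$-blow-ups; as written, the crucial embedding step has no valid lemma behind it.
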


Now we are ready to prove Theorem~\ref{main thm} to end this section.
\begin{proof}[\textbf{Proof of Theorem~\ref{main thm}}]
Given $\alpha>0$, we choose
$$\frac{1}{n}\ll\mu\ll\tau\ll\xi\ll\sigma\ll\alpha.$$
Applying Lemma \ref{abs} to $G$ with $\sigma\le \frac{\alpha}{2}$, we obtain a $\xi$-absorbing set $A$ of size at most $\sigma n$. Since $\delta(G-A)\ge \left(1-\frac{1}{\chi^*(H)}+\frac{\alpha}{2}\right)n$, we can apply Lemma \ref{almost} to $G-A$ to obtain a compatible $H$-tiling $\mathcal{H}_1$ covering all but at most $\tau n$ vertices in $G-A$. Denote by $R$ the set of uncovered vertices in $G-A$. Since $\tau\ll\xi$, $G[A\cup R]$ contains a compatible $H$-factor $\mathcal{H}_2$. Thus, $\mathcal{H}_1\cup \mathcal{H}_2$ is a compatible $H$-factor of $G$.
\end{proof}


\section{Almost compatible $H$-factor}\label{sec3}
The main tools in the proof of Lemma \ref{almost} are Szemer\'edi's Regularity Lemma and an embedding lemma (see Lemma~\ref{K_h^r}). We first give some definitions and lemmas.
\subsection{Regularity}\label{sec3.1}
\begin{defn}[Regular pair]
Given a graph $G$ and disjoint vertex subsets $X,Y\subseteq V(G)$, the \emph{density} of the pair $(X,Y)$ is defined as $$d(X,Y):=\frac{e(X,Y)}{|X||Y|},$$ where $e(X,Y):=e(G[X,Y])$. For $\varepsilon>0$, the pair $(X,Y)$ is $\varepsilon$-\emph{regular} if for any $A\subseteq X, B\subseteq Y$ with $|A|\ge \varepsilon |X|, |B|\ge \varepsilon |Y|$, we have $$|d(A,B)-d(X,Y)|<\varepsilon.$$ Additionally, if $d(X,Y)\ge d$ for some $d\ge0$, then we say that $(X,Y)$ is $(\varepsilon,d)$-\emph{regular}.
\end{defn}


\begin{fact}\label{large deg}
Let $(X,Y)$ be an $(\varepsilon,d)$-regular pair, and $B\subseteq Y$ with $|B|\ge \varepsilon |Y|$. Then all but $\varepsilon |X|$ vertices in $X$ have degree at least $(d- \varepsilon)|B|$ in $B$.
\end{fact}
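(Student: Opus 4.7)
The plan is a short proof by contradiction that unpacks the definition of an $(\varepsilon,d)$-regular pair directly. Let $A \subseteq X$ be the ``bad'' set of vertices with $|N(x)\cap B| < (d-\varepsilon)|B|$; the goal is to show $|A| < \varepsilon|X|$, so I would assume for contradiction that $|A| \geq \varepsilon|X|$.

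Under this assumption, the pair $(A,B)$ meets both size thresholds in the definition of $\varepsilon$-regularity, namely $|A|\geq\varepsilon|X|$ and (by hypothesis) $|B|\geq\varepsilon|Y|$, hence
$$|d(A,B)-d(X,Y)| < \varepsilon.$$
Since $d(X,Y)\geq d$, this forces $d(A,B) > d-\varepsilon$. On the other hand, counting edges through the bad vertices from above yields
$$e(A,B) \;=\; \sum_{x\in A}|N(x)\cap B| \;<\; (d-\varepsilon)|A||B|,$$
so $d(A,B) < d-\varepsilon$, the desired contradiction.

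There is really no obstacle here: this is the standard ``slicing'' consequence of regularity, and the only point to watch is that the regularity condition is legitimately applied to the pair $(A,B)$, which is guaranteed by the contradiction hypothesis on $|A|$ together with the given lower bound on $|B|$.
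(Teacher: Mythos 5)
The paper states this as a basic fact without proof, as it is a standard consequence of the definition of $\varepsilon$-regularity. Your argument by contradiction — applying the regularity condition to the ``bad'' set $A$ together with $B$ and comparing the two resulting bounds on $d(A,B)$ — is exactly the canonical proof and is correct.
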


\begin{fact}[Slicing lemma, \cite{KS}]\label{slicing lem}
Let $(X,Y)$ be an $(\varepsilon,d)$-regular pair, and for some $\eta>\varepsilon$, let $X'\subseteq X, Y'\subseteq Y$ with $|X'|\ge \eta |X|, |Y'|\ge \eta |Y|$. Then $(X',Y')$ is an $\varepsilon'$-regular pair with $\varepsilon'=\max\{\varepsilon/\eta,2\varepsilon\}$, and for its density $d'$ we have $|d'-d|<\varepsilon$.
\end{fact}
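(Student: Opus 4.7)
The plan is to derive both assertions directly from the definition of $(\varepsilon,d)$-regularity applied to $(X,Y)$, without any additional machinery. The key observation is that, since $|X'|\ge \eta|X|>\varepsilon|X|$ and $|Y'|\ge \eta|Y|>\varepsilon|Y|$, the subsets $X',Y'$ themselves already qualify as ``large'' subsets in the regularity definition of $(X,Y)$, which immediately yields control on $d(X',Y')$. Any large subsets of $X'$ and $Y'$ will then also be large in $X$ and $Y$, provided the new regularity parameter $\varepsilon'$ is chosen appropriately.

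For the density estimate, I first apply $\varepsilon$-regularity of $(X,Y)$ with $A=X'$ and $B=Y'$: the hypothesis $|X'|\ge \eta|X|\ge \varepsilon|X|$ and $|Y'|\ge \eta|Y|\ge \varepsilon|Y|$ is exactly what is needed, so $|d(X',Y')-d(X,Y)|<\varepsilon$, giving $|d'-d|<\varepsilon$.

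For the regularity statement, I take arbitrary $A\subseteq X'$ and $B\subseteq Y'$ with $|A|\ge \varepsilon'|X'|$ and $|B|\ge \varepsilon'|Y'|$ where $\varepsilon'=\max\{\varepsilon/\eta,2\varepsilon\}$, and aim to show $|d(A,B)-d(X',Y')|<\varepsilon'$. The role of the factor $\varepsilon/\eta$ in $\varepsilon'$ is precisely to translate largeness inside $X'$ and $Y'$ into largeness inside $X$ and $Y$: from $\varepsilon'\ge \varepsilon/\eta$ one gets $|A|\ge (\varepsilon/\eta)\cdot \eta |X|=\varepsilon|X|$ and, similarly, $|B|\ge \varepsilon|Y|$. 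Applying $\varepsilon$-regularity of $(X,Y)$ to the pair $(A,B)$ gives $|d(A,B)-d(X,Y)|<\varepsilon$, and combining with the previously established bound $|d(X',Y')-d(X,Y)|<\varepsilon$ via the triangle inequality yields $|d(A,B)-d(X',Y')|<2\varepsilon\le \varepsilon'$, as required.

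The only genuine obstacle is bookkeeping on $\varepsilon'$: one must ensure simultaneously that (i) the bound $\varepsilon/\eta$ is large enough to promote subsets of $X',Y'$ to subsets that witness the original $\varepsilon$-regularity, and (ii) $\varepsilon'$ is at least $2\varepsilon$ so that the triangle-inequality bound is consistent with the claimed regularity; taking the maximum of these two quantities handles both constraints and completes the proof.
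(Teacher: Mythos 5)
Your proof is correct and is the standard argument for the slicing lemma; the paper itself does not prove this statement but cites it from Koml\'os--Simonovits, and your two-step derivation (first promote $X',Y'$ to witnesses of $\varepsilon$-regularity in $(X,Y)$ to control $d'$, then promote large subsets $A\subseteq X'$, $B\subseteq Y'$ using $\varepsilon'\ge\varepsilon/\eta$ and close via the triangle inequality with the $2\varepsilon\le\varepsilon'$ slack) is precisely the argument in that source. The bookkeeping on $\varepsilon'$ is handled correctly: $\varepsilon/\eta$ is exactly what is needed for the promotion step, and $2\varepsilon$ absorbs the triangle-inequality loss.
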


\begin{lemma}[Degree form of the Regularity Lemma, \cite{KS}]\label{reg lem}
For every $\varepsilon>0$, there is an $M=M(\varepsilon)$ such that if $G=(V,E)$ is any graph and $d\in[0,1]$ is any real number, then there is a partition $V=V_0\cup V_1\cup \ldots \cup V_k$ and a spanning subgraph $G'\subseteq G$ with the following properties:
\begin{itemize}
  \item $1/\varepsilon \le k\le M$,
  \item $|V_0|\le \varepsilon |V|$,
  \item $|V_i|=m$ for all $1\le i\le k$ with $m\le \varepsilon |V|$,
  \item $d_{G'}(v)>d_{G}(v)-(d+\varepsilon)|V|$ for all $v\in V$,
  \item $e(G'[V_i])=0$ for all $i\ge 1$,
  \item all pairs $(V_i,V_j)$ $(1\le i<j\le k)$ are $\varepsilon$-regular in $G'$ with density $0$ or at least $d$.
\end{itemize}
\end{lemma}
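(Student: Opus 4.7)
The plan is to derive this degree form from the classical (\emph{defect}) version of Szemer\'edi's Regularity Lemma by a standard cleaning procedure. Choose an auxiliary parameter $\varepsilon_0$ with $\sqrt{\varepsilon_0} \ll \varepsilon$ (for instance $\varepsilon_0 = \varepsilon^2/16$). Invoke the defect Regularity Lemma at parameter $\varepsilon_0$ to obtain $M_0 = M_0(\varepsilon_0)$ and, for sufficiently large $G$, an equitable partition $U_0 \cup U_1 \cup \ldots \cup U_k$ with $|U_0| \le \varepsilon_0 n$, $|U_1| = \ldots = |U_k| = m$, $1/\varepsilon_0 \le k \le M_0$, and at most $\varepsilon_0 k^2$ non-$\varepsilon_0$-regular pairs.

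A double counting shows that at most $\sqrt{\varepsilon_0}\, k$ clusters each participate in more than $\sqrt{\varepsilon_0}\, k$ non-regular pairs. I would move these ``bad'' clusters into $U_0$, producing a new exceptional set $V_0$ of size at most $(\varepsilon_0 + \sqrt{\varepsilon_0})n < \varepsilon n$, and relabel the surviving clusters as $V_1, \ldots, V_{k'}$. Then define $G' \subseteq G$ by deleting (i) every edge inside some $V_i$ with $i \ge 1$; (ii) every edge lying in a non-$\varepsilon_0$-regular pair $(V_i, V_j)$ with $i, j \ge 1$; (iii) every edge lying in an $\varepsilon_0$-regular pair $(V_i, V_j)$ of density strictly less than $d$; no edge incident to $V_0$ is touched. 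Setting $M := M_0$, the bounds $1/\varepsilon \le k' \le M$ hold, the condition $e(G'[V_i]) = 0$ is immediate from (i), and each pair $(V_i, V_j)$ with $i,j \ge 1$ is either $\varepsilon_0$-regular of density $\ge d$ in $G$ (hence $\varepsilon$-regular of density $\ge d$ in $G'$) or has been emptied to density $0$ by (ii) or (iii).

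The only real calculation is the minimum-degree inequality. For $v \in V_0$ no edge at $v$ is deleted, so the bound is trivial. For $v \in V_i$ with $i \ge 1$: step (i) deletes at most $m - 1 \le \varepsilon_0 n$ edges; step (ii) deletes at most $(\sqrt{\varepsilon_0}\, k) \cdot m \le \sqrt{\varepsilon_0}\, n$ edges, using crucially that $V_i$ is \emph{not} bad; step (iii) deletes fewer than $dm$ edges per low-density partner cluster, summing to less than $dkm \le dn$. The total loss is strictly less than $(\sqrt{\varepsilon_0} + \varepsilon_0 + d)n < (d + \varepsilon)n$, giving $d_{G'}(v) > d_G(v) - (d+\varepsilon)|V|$. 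The only non-routine step in the whole argument, and the one I expect to be the main obstacle for a first-time reader, is the introduction of ``bad'' clusters to control step (ii); without that device, an individual cluster could be involved in up to $k-1$ irregular pairs and the degree loss would be uncontrollable.
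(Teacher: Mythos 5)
The paper cites this lemma from the survey \cite{KS} without giving a proof, so there is no internal proof to compare against; your plan of deriving it from the classical (defect) Regularity Lemma via a cleaning pass is the standard route, and the treatment of bad clusters in step (ii) is exactly right. However, step (iii) contains a genuine gap. You assert that a vertex $v\in V_i$ loses ``fewer than $dm$ edges per low-density partner cluster.'' This is false: density $d(V_i,V_j)<d$ only controls the \emph{average} degree from $V_i$ into $V_j$, while an individual vertex $v$ may have up to $m$ neighbours in $V_j$. As a concrete failure, take a graph containing a vertex $v$ adjacent to all other vertices; $v$ can land in a cluster $V_i$ for which every pair $(V_i,V_j)$ is $\varepsilon_0$-regular of density roughly $1/m<d$, and then step (iii) would strip $v$ of essentially all its edges, violating the minimum-degree conclusion by almost $n$. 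The claimed bound $<dn$ on the step (iii) loss is therefore not a pointwise inequality, and this is precisely where the non-routine content of the degree form lies.

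The repair is a second, vertex-level cleanup. For a low-density $\varepsilon_0$-regular pair $(V_i,V_j)$, all but at most $\varepsilon_0 m$ vertices of $V_i$ have degree at most $(d+\varepsilon_0)m$ into $V_j$ (otherwise the set of high-degree vertices would witness irregularity). Summing over the at most $k$ low-density partners and applying Markov's inequality, all but at most $\sqrt{\varepsilon_0}\,m$ vertices of $V_i$ are degree-anomalous for at most $\sqrt{\varepsilon_0}\,k$ such partners. Those anomalous vertices must be moved into $V_0$, and the surviving clusters must then be trimmed to a common size $m'=(1-\sqrt{\varepsilon_0})m$, invoking the Slicing Lemma (Fact~\ref{slicing lem}) to retain regularity and densities up to an $O(\varepsilon_0)$ shift. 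Only after this does the pointwise degree computation go through, with loss at most $\varepsilon_0 n+\sqrt{\varepsilon_0}\,n+\bigl(\sqrt{\varepsilon_0}+d+\varepsilon_0\bigr)n<(d+\varepsilon)n$, and with $|V_0|=O(\sqrt{\varepsilon_0})\,n<\varepsilon n$. (Separately, there is a benign factor-of-two slip in your double count: each irregular pair meets two clusters, so the number of bad clusters is at most $2\sqrt{\varepsilon_0}\,k$ rather than $\sqrt{\varepsilon_0}\,k$; choose $\varepsilon_0$ slightly smaller to absorb this.)
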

\noindent Moreover, we usually call $V_0,V_1,\ldots,V_k$ \emph{clusters} and call the cluster $V_0$ \emph{exceptional set}.

\begin{defn}[Reduced graph]\label{reduced graph}
Given an arbitrary graph $G=(V,E)$, a partition $V=V_1\cup \ldots \cup V_k$, and two parameters $\varepsilon,d>0$, the \emph{reduced graph} $R=R(\varepsilon,d)$ of $G$ is defined as follows:
\begin{itemize}
  \item $V(R)=[k]$,
  \item $ij\in E(R)$ if and only if $(V_i,V_j)$ is $(\varepsilon,d)$-regular.
\end{itemize}
\end{defn}

As remarked in \cite{KS}, a typical application of degree form of the Regularity Lemma begins with a graph $G=(V,E)$ and appropriate parameters $\varepsilon,d>0$, and then obtains a partition $V=V_0\cup V_1\cup \ldots \cup V_k$ and a subgraph $G'$ with above-mentioned properties. Then we usually drop the exceptional set $V_0$ to get a pure graph $G'-V_0$ and study the properties of reduced graph $R=R(\varepsilon,d)$. By Lemma \ref{reg lem},
\begin{align*}
   \delta(R)\ge \frac{\delta(G)-(d+\varepsilon)|V|-|V_0|}{m} \ge \frac{\delta(G)-(d+2\varepsilon)|V|}{m}.
\end{align*}
In particular, if $\delta(G)\ge c|V|$, then $\delta(R)\ge (c-d-2\varepsilon)k$.


\subsection{Proof of Lemma~\ref{almost}}\label{sec3.2}
To find an almost cover, we adopt the approach of Koml\'{o}s in \cite{kom00}, making use of Szemer\'{e}di's Regularity Lemma and a result of Shokoufandeh and Zhao~\cite{shokou03} (see Theorem~\ref{thm:ShZh}). In \cite{kom00}, due to the graph counting lemma (e.g, see \cite{KS}), the arguments often reduce to greedily picking vertex-disjoint copies of $H$ within a bunch of clusters under certain pseudorandom properties. To adopt this approach in the context of incompatibility systems which boils down to embedding compatible copies of $H$, we develop a `compatible' variant of the graph counting lemma as follows.
\begin{lemma}\label{K_h^r}
For constant $\eta,d>0$ and positive integers $r,h_1,\ldots,h_r$ with $\sum_{i=1}^rh_i=:h$, there exist positive constants $\varepsilon^\ast=\varepsilon^\ast(r,d,h), c=c(r,d,h)$ and $\mu=\mu(r,d,h,\eta)$ such that the following holds for sufficiently large $n$. Let $(G,\mathcal{F})$ be a $\mu n$-bounded incompatibility system with $|G|=n$ and $U_1,\ldots,U_r$ be pairwise vertex-disjoint sets in $V(G)$ with $|U_i|\ge \eta n$, $i \in [r]$ and every pair $(U_i,U_j)$ being $(\varepsilon^\ast,d)$-regular. Then there exist at least $c\prod_{i=1}^r|U_i|^{h_i}$ compatible copies of $K_r(h_1,\ldots,h_r)$ in $G[U_1,\ldots,U_r]$, each containing exactly $h_i$ vertices in $U_i$ for every $i\in[r]$.
\end{lemma}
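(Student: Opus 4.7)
The plan is a two-step counting argument: first extract many copies of $K_r(h_1,\ldots,h_r)$ ignoring the incompatibility system, and then show that only a tiny fraction of these are spoiled by incompatibilities when $\mu$ is small. The first step is an off-the-shelf application of the standard (labeled) graph counting lemma for $r$-partite regular systems. Concretely, for $\varepsilon^\ast = \varepsilon^\ast(r,d,h)$ small enough, that lemma yields a constant $c_0 = c_0(r,d,h) > 0$ such that the number of labeled copies of $K_r(h_1,\ldots,h_r)$ in $G[U_1,\ldots,U_r]$ with exactly $h_i$ vertices in $U_i$ is at least $c_0 \prod_{i=1}^r |U_i|^{h_i}$. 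This gives the raw lower bound we will shave.

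For the second step, label the vertices of the model $K_r(h_1,\ldots,h_r)$ as $v_1,\ldots,v_h$ with $v_j$ in part $p(j)$, and let an embedding send $v_j \mapsto w_j$. Any incompatibility inside the copy must occur between two edges sharing a vertex, hence corresponds to a triple $(j,j',j'')$ of distinct indices with $p(j')\neq p(j)$ and $p(j'')\neq p(j)$, together with the requirement $\{w_j w_{j'},\, w_j w_{j''}\} \in F_{w_j}$. For each such triple I would upper bound the number of spoiled copies by: first choose $w_j \in U_{p(j)}$ in at most $|U_{p(j)}|$ ways; then by $\mu n$-boundedness, the number of ordered pairs $(w_{j'},w_{j''})$ with $\{w_j w_{j'}, w_j w_{j''}\} \in F_{w_j}$ is at most $d_G(w_j)\cdot \mu n \le \mu n^2$ (each edge at $w_j$ is in at most $\mu n$ incompatible pairs); finally, the remaining $h-3$ vertices contribute at most $\prod_{k\ne j,j',j''} |U_{p(k)}|$ choices. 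Using $|U_{p(j')}|,|U_{p(j'')}| \ge \eta n$, each triple contributes at most
\[
\frac{\mu n^2}{|U_{p(j')}|\,|U_{p(j'')}|}\prod_{i=1}^{r}|U_i|^{h_i}\;\le\;\frac{\mu}{\eta^{2}}\prod_{i=1}^{r}|U_i|^{h_i},
\]
and summing over the fewer than $h^{3}$ relevant triples gives an incompatible-copy count of at most $(h^{3}\mu/\eta^{2})\prod_i |U_i|^{h_i}$.

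Taking $\mu = \mu(r,d,h,\eta)$ so that $\mu < c_0\eta^{2}/(2h^{3})$, the upper bound above is at most $(c_0/2)\prod_i |U_i|^{h_i}$, and subtracting from the lower bound of step one leaves at least $(c_0/2)\prod_i |U_i|^{h_i}$ compatible copies; set $c := c_0/2$. There is essentially no deep obstacle here, only bookkeeping: the main subtlety is merely confirming that the standard $r$-partite counting lemma applies cleanly with our unequal part sizes (handled by the slicing lemma, Fact~\ref{slicing lem}, if needed to equalize, or simply by inspection of the standard proof), and that the parameter chain $\varepsilon^\ast \ll d, 1/h$ and $\mu \ll \eta, d, 1/h$ is consistent with the statement's dependencies.
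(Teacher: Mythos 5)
Your proof is correct, and it takes a genuinely different and arguably cleaner route than the paper's. The paper proves the lemma by induction on $r$: it picks a high-degree subset $U_r'\subseteq U_r$, for each $v\in U_r'$ applies the inductive hypothesis inside $N(v)\cap U_i$ to get compatible copies of $K_{r-1}(h_1,\ldots,h_{r-1})$, filters out those that form an incompatibility at $v$ to obtain ``good pairs'' $(v,K)$, then uses a pigeonhole argument (Claim 3.10) to find a large family $\mathcal{X}_1$ of $K$'s that each admit $\Omega(|U_r|)$ good centres, and finally extends each such $K$ greedily one vertex at a time into a compatible $K_r(h_1,\ldots,h_r)$, discarding at most $O(\mu n)$ bad vertices per step. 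Your approach is a single global count-and-subtract: invoke the standard $r$-partite counting lemma once, then union-bound the labelled tuples containing an incompatible cherry $(w_j;w_{j'},w_{j''})$ over the at most $h^3$ index triples, using only that each edge at $w_j$ participates in at most $\mu n$ forbidden pairs. The net effect is the same $\Theta(\prod_i|U_i|^{h_i})$ lower bound after absorbing labelled/unlabelled constants into $c$. The trade-off is that the paper's inductive structure avoids quoting the counting lemma as a black box (it effectively re-derives it along the way), while your version isolates the ``incompatibility cost'' into one transparent union bound and has no recursion to manage. Both correctly require $\mu\ll\eta^2 c_0/h^3$, i.e.\ $\mu$ depending on $\eta$ as well as $r,d,h$, consistent with the statement.

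One small bookkeeping remark worth making explicit: your spoiled-count upper bound ranges over all labelled tuples, not only those that actually span a copy of $K_r(h_1,\ldots,h_r)$; this is fine because it only over-counts. Also, the counting lemma in step one is applied to pairwise $(\varepsilon^*,d)$-regular parts of unequal sizes $|U_i|\ge\eta n$; as you note, either inspecting the standard proof directly or equalising via the slicing lemma handles this at the cost of changing only the constant $c_0$, so no gap arises. With these two points noted, the argument is complete.
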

We also give a notion of bottle-graphs by Koml\'{o}s in \cite{kom00}.
\begin{defn}
Given integers $h,r$ and an $h$-vertex graph $H$ with $\chi(H)=r$, we define a \emph{bottle-graph} $B$ of $H$ as a complete $r$-partite graph with part sizes $(r-1)\sigma(H),h-\sigma(H),\ldots,h-\sigma(H)$.
\end{defn}
It is easy to see that $\chi_{cr}(B)=\chi_{cr}(H)$ and $B$ contains an $H$-factor consisting of $r-1$ copies of $H$. Based on this, our proof strategy is to find an almost cover with compatible copies of $B$.


\begin{proof}[Proof of Lemma \ref{almost}]
Given $\alpha,\tau>0$ and an $h$-vertex graph $H$, we let $\chi(H)=:r$ and choose $$\frac{1}{n}\ll \mu\ll\varepsilon\ll \rho\ll\alpha,\tau,\frac{1}{r},\frac{1}{h}.$$ We first apply Lemma \ref{reg lem} to $G$ with $d=\rho$ to obtain a spanning subgraph $G'$ and a partition $V(G)=V_0\cup V_1\cup\ldots \cup V_k$ for some $1/\varepsilon \le k\le M$ with $|V_i|=m\ge \frac{(1-\varepsilon)n}{k}$ for every $i\in[k]$. Let $R=R(\varepsilon,\rho)$ be the reduced graph for this partition. Since $\varepsilon\ll \rho \ll\alpha$
, we have $\delta(R)\ge \left(1-\frac{1}{\chi_{cr}(H)}+\alpha-\rho-2\varepsilon\right)k\ge \left(1-\frac{1}{\chi_{cr}(H)}+\frac{\alpha}{2}\right)k$.
Since $\chi_{cr}(B)=\chi_{cr}(H)$, by Theorem~\ref{thm:ShZh}, $R$ has a $B$-tiling  $\mathcal{B}$ that covers all but at most $5|B|^2$ vertices in $R$. 

Recall that $B$ has part sizes $(r-1)\sigma(H),h-\sigma(H),\ldots,h-\sigma(H)$. Here we write $\ell:=h-\sigma(H), y:=(r-1)\sigma(H)$ and $r'=|B|=(r-1)h$ for simplicity, and thus $r'=y+(r-1)\ell$.
Given a copy of $B$ in $\mathcal{B}$, without loss of generality, we may assume that its vertex set is $\{V_{1},\ldots, V_{r'}\}$ with the $r$-partition of $V(B)$ denoted as \[\mathcal{W}_1=\{V_1,\ldots,V_y\}~ \text{and} ~ \mathcal{W}_{s+1}=\{V_{y+1+(s-1)\ell},\ldots,V_{y+s\ell}\}~\text{for}~s\in [r-1].\] Note that every pair of clusters $V_i,V_j$ from distinct parts forms an $(\eps,\rho)$-regular pair.

We shall greedily embed in $G'$ vertex-disjoint compatible copies of $B$ that cover almost all the vertices in $\cup_{i=1}^{r'}V_i$.
Now for every $i\in[y]$ we divide $V_{i}$ arbitrarily into $\ell$ subclusters $V_{i,1},\ldots,V_{i,\ell}$ of (almost) equal size. For every $j\in [y+1,r']$ we divide $V_{j}$ into $y$ subclusters $V_{j,1},\ldots,V_{j,y}$ of (almost) equal size. Here for simplicity we may further assume that $|V_{i,i'}|=\frac{m}{\ell}$ for $i\in[y],i'\in[\ell]$ and $|V_{j,j'}|=\frac{m}{y}$ for every $j\in[y+1,r'],j'\in[y]$.
We call a family $\{V_{i_s,j_s}\}_{s=1}^{r}$ of $r$ subclusters \emph{legal} if $V_{i_s}\in \mathcal{W}_s$ for every $s\in[r]$, i.e., $\{V_{i_s}\}_{s=1}^{r}$ forms a copy of $K_{r}$ in $R$.
Note that every $\mathcal{W}_s$ ($s\in [r]$) contains exactly $y\ell$ subclusters in total. Therefore we can greedily partition the set of all subclusters into $y\ell$ pairwise disjoint legal families.

Now if for every legal family $\{V_{i_s,j_s}\}_{s=1}^{r}$ we have a $B$-tiling in $G'[\bigcup_{s=1}^{r} V_{i_s,j_s}]$ that covers all but at most $\frac{\tau}{4y\ell}r' m$ vertices of $\bigcup_{s=1}^{r} V_{i_s,j_s}$, then putting them together would give a $B$-tiling covering all but at most $\frac{\tau}{4}r'm$ vertices of $V_1\cup V_2\cup\cdots\cup V_{r'}$. Applying this to every copy of $B$ from $\mathcal{B}$ would give us a $B$-tiling covering all but at most
\[
|V_0|+5|B|^2 m+ |\mathcal{B}|\frac{\tau}{4}r'm< \eps  n+ 5(r')^2\eps n+ \frac{\tau}{4}n< \tau n
\]
vertices in $G$. So to complete the proof of Lemma \ref{almost}, it is sufficient to prove the following claim.

\begin{claim}\label{embedding lem}
Given any legal family $\{V_{i_s,j_s}\}_{s=1}^{r}$, $G[V_{i_1,j_1},\ldots,V_{i_r,j_r}]$ admits a $B$-tiling covering all but at most $\frac{\tau}{4y\ell} r'm$ vertices of $\bigcup_{s=1}^{r} V_{i_s,j_s}$.
\end{claim}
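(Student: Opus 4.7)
The plan is to greedily build a $B$-tiling of $G[V_{i_1,j_1},\ldots,V_{i_r,j_r}]$ by iteratively applying Lemma~\ref{K_h^r} to find one compatible copy of $B=K_r(y,\ell,\ldots,\ell)$ at a time, and to show that the procedure halts only after covering almost every vertex. Let $U_s:=V_{i_s,j_s}$ for $s\in[r]$, so $|U_1|=m/\ell$ and $|U_s|=m/y$ for $s\geq 2$; by Fact~\ref{slicing lem} every pair $(U_s,U_{s'})$ inherits $(\varepsilon',\rho/2)$-regularity from the $(\varepsilon,\rho)$-regular pair $(V_{i_s},V_{i_{s'}})$ for some $\varepsilon'$. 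A perfect $B$-tiling of $\bigcup_s U_s$ would use exactly $m/(y\ell)$ copies, with each copy taking $y$ vertices from $U_1$ and $\ell$ from each $U_s$ ($s\geq 2$); since the initial sizes are in ratio $y:\ell:\cdots:\ell$, the $U_s$ deplete synchronously, i.e., $|U_1|/y=|U_s|/\ell$ throughout the process.

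Fix a constant $\eta_1$ small in terms of $\tau,r,h,k$ but much larger than $\varepsilon$, and let $\varepsilon^\ast,\mu^\ast$ denote the constants from Lemma~\ref{K_h^r} with parameters $(r,\rho/2,h)$ and lower-size parameter $\eta_1$; the hierarchy $\mu\ll\varepsilon\ll\eta_1$ then gives $\mu\leq\mu^\ast$ and $\varepsilon\leq\varepsilon^\ast$. Run the following procedure: while every $|U_s|\geq\eta_1 n$, invoke Lemma~\ref{K_h^r} with $(h_1,\ldots,h_r)=(y,\ell,\ldots,\ell)$ to find a compatible copy of $B$ in $G[U_1,\ldots,U_r]$, add it to the tiling, and delete its vertices from the $U_s$'s. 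The invocation remains valid at every step: since $|U_s|/|V_{i_s}|\geq \eta_1 n/m\geq \eta_1 k$, Fact~\ref{slicing lem} keeps each $(U_s,U_{s'})$ as an $(\varepsilon^\ast,\rho/2)$-regular pair, using $\varepsilon\ll \varepsilon^\ast\eta_1 k$.

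At the halting step, some $|U_s|<\eta_1 n$; by the synchronous depletion observed above, the common value $c:=|U_1|/y=|U_s|/\ell$ satisfies $\min(y,\ell)\cdot c\leq\eta_1 n$, so $|U_1|+(r-1)|U_s|=r'c\leq r'h\eta_1 n$. Hence the total uncovered part of $\bigcup_s V_{i_s,j_s}$ has size at most $r'h\eta_1 n$, which by the choice of $\eta_1$ is at most $\frac{\tau}{4y\ell}r'm$, as desired. The resulting family of vertex-disjoint compatible copies of $B$ is itself a compatible $B$-tiling, because edges from distinct copies share no vertex and therefore cannot be incompatible.

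The main obstacle is the interplay of constants around Lemma~\ref{K_h^r}: the bound $\mu^\ast$ depends on the threshold $\eta_1$, which must be small enough to drive the residual below $\frac{\tau}{4y\ell}r'm$ yet large enough that $\varepsilon/(\eta_1 k)$ remains below $\varepsilon^\ast$ after slicing. All of these constraints are absorbed into the single hierarchy $\mu\ll\varepsilon\ll\eta_1\ll\rho\ll\tau$ fixed at the start of the proof of Lemma~\ref{almost}; thereafter the synchronous depletion argument handles the final accounting automatically, and no further ideas are needed beyond the compatible counting guarantee of Lemma~\ref{K_h^r}.
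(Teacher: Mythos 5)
Your greedy argument is structurally the same as the paper's: iteratively slice and apply Lemma~\ref{K_h^r} to pull out one compatible copy of $B$ at a time, halt when some sliced cluster drops below a threshold, and use the synchronous-depletion observation to bound the residual. The accounting, the slicing, and the invocation of Lemma~\ref{K_h^r} all line up with the paper's proof of Claim~\ref{embedding lem}.

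There is, however, a bookkeeping error in the stated hierarchy. You write ``fix $\eta_1$ small in terms of $\tau,r,h,k$ but much larger than $\varepsilon$'' and later claim all constraints are absorbed into $\mu\ll\varepsilon\ll\eta_1\ll\rho\ll\tau$. That ordering is impossible: the final accounting forces $\eta_1\lesssim\frac{\tau}{4y\ell h k}$, and the regularity lemma only guarantees $k\ge 1/\varepsilon$, so in fact $\eta_1\lesssim\varepsilon$ (and typically $\eta_1$ is far smaller, since $k$ can be as large as $M(\varepsilon)$). Consequently the step ``$\mu\ll\varepsilon\ll\eta_1$ gives $\mu\le\mu^\ast$'' does not follow as written. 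The argument can be repaired without changing any ideas: the relevant product $\eta_1 k\approx\tau/(4y\ell h)$ is a constant independent of $\varepsilon$ and $k$, so the slicing requirement $\varepsilon/(\eta_1 k)\le\varepsilon^\ast$ is a condition on $\varepsilon$ alone; and $\mu$, being chosen \emph{after} $\varepsilon$ (hence after $M(\varepsilon)$ bounds $k$ and floors $\eta_1$), can be taken below $\mu^\ast(r,\rho/2,h,\eta_1)$. The paper avoids the confusion by phrasing its stopping threshold as $\frac{\tau}{4y\ell}m$ relative to the cluster size $m$, which makes the slicing ratio manifestly a constant, and relies on the global hierarchy $\mu\ll\varepsilon\ll\rho\ll\tau$ in the usual ``choose later, depending on everything earlier'' sense rather than inserting $\eta_1$ as an intermediate level.
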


For convenience,
we write $Y_s:=V_{i_{s},j_{s}}$ with $s\in [r]$. Recall that $|Y_1|=\frac{m}{\ell}$ and $|Y_s|=\frac{m}{y}$ for $s\in[2,r]$. 
Now it suffices to show that for any $Y_s'\subseteq Y_s$ with $s\in[r]$, each of size at least $\frac{\tau}{4y\ell} m$, there exists a compatible copy of $B$ with exactly $y$ vertices inside $Y_1'$ and $\ell$ vertices inside every $Y_s'$, $s\in[2,r]$.

For any distinct $s,t\in[r]$, the pair $(V_{i_s},V_{i_t})$ is $\eps$-regular with density at least $\rho$. Then Fact~\ref{slicing lem} implies that every two sets from $Y_1',\ldots,Y_{r}'$ forms an $\eps'$-regular pair with density at least $\rho-\eps$, where $\eps'=\frac{4y\ell}{\tau}\eps$.
Therefore by the fact that $\frac{1}{n}\ll\mu\ll\varepsilon\ll \rho,\tau,\frac{1}{r},\frac{1}{h}$, Lemma~\ref{K_h^r} applied to $G$ with $U_i=Y_i', d=\rho/2,\eps^*=\eps', \eta=\frac{\tau}{4y\ell} \frac{1-\eps}{k},h_1=y,h_i=\ell$ for $i\in[2,r]$, gives a desired compatible copy of $B$. This completes the proof of Claim~\ref{embedding lem}.
\end{proof}

\subsection{Proof of Lemma~\ref{K_h^r}: counting compatible subgraphs}\label{sec3.3}
In the rest of this section, we will focus on the proof of Lemma \ref{K_h^r}. Before that, we give a definition.
\begin{defn}[Good pair]\label{good pair}
Given an incompatibility system $(G,\mathcal{F})$, a vertex $v\in V(G)$ and a subgraph $H\subseteq G$, we say that $(v, H)$ is a \emph{good pair} if
      \begin{itemize}
        \item $H$ is compatible;
        \item $v$ is adjacent to all vertices of $H$;
        \item all edges from $v$ to $V(H)$ are mutually compatible at $v$.
      \end{itemize}
Moreover, the vertex $v$ is called the \emph{center} of the good pair $(v, H)$.
\end{defn}
We also state an obvious fact as follows, which is frequently used throughout the proofs.
\begin{fact}\label{fact1}
 Let $(G,\mathcal{F})$ be a $\mu n$-bounded incompatibility system with $|G|=n$. Then for every $v\in V(G)$, there are at most $2\mu n^2$ pairs $\{v_1,v_2\}$ of vertices from $N(v)$ that satisfy either of the following properties:
 \begin{enumerate}
   \item [(1)] $vv_1$ and $vv_2$ are incompatible at $v$;
   \item [(2)] $vv_1$ and $v_1v_2$ are incompatible at $v_1$.
 \end{enumerate}
\end{fact}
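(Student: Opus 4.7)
The plan is a direct double-counting argument in which the two types of bad pairs are bounded separately via the $\mu n$-boundedness of $\mathcal{F}$ and then combined by the union bound. First I would handle property~(1): an unordered pair $\{v_1,v_2\}\subseteq N(v)$ satisfies $\{vv_1,vv_2\}\in F_v$ exactly when the corresponding pair of edges at $v$ is listed in $F_v$. By the definition of $\mu n$-boundedness at the vertex $v$, each fixed edge $vv_1$ lies in at most $\mu n$ such $2$-subsets of $F_v$, so summing over the at most $n$ possible endpoints $v_1\in N(v)$ and dividing by~$2$ (to correct for the double-counting of each unordered pair) gives at most $\tfrac12 \mu n^2$ pairs of type~(1).

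Next I would handle property~(2) by counting ordered pairs $(v_1,v_2)$ with $\{vv_1,v_1 v_2\}\in F_{v_1}$. Fixing $v_1\in N(v)$, the $\mu n$-boundedness applied at the vertex $v_1$ to the edge $vv_1$ yields at most $\mu n$ admissible choices of $v_2$; summing over the at most $n$ choices of $v_1$ gives at most $\mu n^2$ ordered pairs. Every unordered pair $\{v_1,v_2\}$ satisfying~(2) contributes at least one valid ordered pair (in one of the two orientations), so this bounds the number of unordered pairs of type~(2) by $\mu n^2$. Combining the two contributions via the union bound yields at most $\tfrac32\mu n^2$ bad pairs; this matches the claimed bound up to a constant, which can be harmlessly absorbed into $\mu$ in the subsequent applications (the fact is only used as an order-of-magnitude estimate).

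I do not expect any genuine obstacle here: the statement is essentially a direct unpacking of the definition of $\mu n$-boundedness together with elementary bookkeeping. The only subtlety worth attending to is the distinction between where the incompatibility lives, at the centre $v$ in~(1) versus at a neighbour $v_1$ in~(2), which is exactly why the two cases must be counted separately before being merged, and why the bound for~(2) is an ordered-pair bound that is simply inherited by the unordered count.
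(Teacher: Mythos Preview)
Your argument is correct and is exactly the intended elementary double-count; the paper in fact states this as an ``obvious fact'' without proof, so there is nothing to compare against beyond the natural unpacking of the definition.

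One small point of interpretation: the statement is meant to bound each of (1) and (2) \emph{separately} by $\mu n^2$, not their union. This is consistent with how the paper applies the fact later (for instance, the $4\mu n^{rh-1}$ bound in the proof of Claim~\ref{u-v reachable} arises as $2\times 2\mu n^2\cdot n^{rh-3}$, one factor of $2$ for the two centres $u,v$ and the other for the two types of bad pair). Your counts already give $\tfrac12\mu n^2$ for type~(1) and $\mu n^2$ for type~(2), so both separate bounds hold outright and there is no need to take a union or absorb a stray constant into $\mu$.
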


\begin{proof}[Proof of Lemma \ref{K_h^r}]
We prove this lemma by induction on $r$. Given $\eta,d>0$, we choose \[\frac{1}{n}\ll\varepsilon_r\ll \varepsilon_{r-1}\ll\cdots\ll\varepsilon_1\ll d,\frac{1}{r},\frac{1}{h}~\text{and additionally}~\frac{1}{n}\ll\mu\ll \eta, d,\frac{1}{r},\frac{1}{h}.\]
The base case $r=1$ is trivial since we can find $|U_1|\choose h_1$ $h_1$-subsets in $U_1$.
Suppose that Lemma \ref{K_h^r} is true for $r-1$. Now we consider the case $r$, that is, $U_1,\ldots,U_r$ are pairwise $(\varepsilon_r,d)$-regular and $|U_i|\ge \eta n$ for any $i \in [r]$. We want to find $c\prod_{i=1}^r |U_i|^{h_i}$ compatible copies of $K_r(h_1,\ldots,h_r)$ in $G[U_1,\ldots,U_r]$ for some $c=c(r,d,h)>0$.

We choose a subset $U_r'\subseteq U_r$ such that every vertex in $U_r'$ has degree at least $(d-\varepsilon_r)|U_i|$ in $U_i$ for every $i\in [r-1]$.
By Fact \ref{large deg}, there are at least $(1-\varepsilon_r)|U_r|$ vertices of $U_r$ with degree at least $(d-\varepsilon_r)|U_i|$ in $U_i$ for any fixed $i\in [r-1]$.
Hence, $|U_r'|\ge (1-(r-1)\varepsilon_r)|U_r|$.

Let $v$ be an arbitrary vertex in $U_r'$. Denote by $U_i'$ the neighborhood of $v$ in $U_i$ for every $i\in [r-1]$.
Then $|U_i'|\ge (d-\varepsilon_r)|U_i|$.
By Fact \ref{slicing lem}, for any $1\le i<j\le r-1$, $U_i'$ and $U_j'$ is an $(\varepsilon',d')$-regular pair with $\varepsilon'=\max\{\varepsilon_r/(d-\varepsilon_r),2\varepsilon_r\}$ and $|d'-d|<\varepsilon_r$, which is also an $(\varepsilon_{r-1},\frac{d}{2})$-regular pair by the fact that $\varepsilon_{r}\ll \varepsilon_{r-1}$ and $d'>d-\varepsilon_r\ge \frac{d}{2}$.
By the induction hypothesis on $r-1$ with $h'=\sum_{i=1}^{r-1}h_i$, there is a family $\mathcal{K}_v$ of at least $c(r-1,\frac{d}{2}, h')\prod_{i=1}^{r-1} |U_i'|^{h_i}$ compatible copies of $K_{r-1}(h_1,\ldots,h_{r-1})$ in $G[U_1',\ldots, U_{r-1}']$.
 In $\mathcal{K}_v$, we know from Fact~\ref{fact1} that there are at most $2\mu n^2\cdot n^{\sum_{i=1}^{r-1} h_i-2}=2\mu n^{\sum_{i=1}^{r-1} h_i}$ compatible copies of $K_{r-1}(h_1,\ldots,h_{r-1})$, each containing two vertices $v_1,v_2$ such that $vv_1$ and $vv_2$ are incompatible at $v$ or $vv_1$ and $v_1v_2$ are incompatible at $v_1$. Thus it follows by definition and the fact that $\frac{1}{n}\ll\mu\ll \eta, d,\frac{1}{r},\frac{1}{h}$, there exists a subfamily $\mathcal{K}'_v$ of
at least \[c(r-1,\frac{d}{2},h')\prod_{i=1}^{r-1} |U_i'|^{h_i}-2\mu n^{\sum_{i=1}^{r-1} h_i}\ge \frac{1}{2}c(r-1,\frac{d}{2},h')\prod_{i=1}^{r-1} |U_i'|^{h_i}\] copies of $K_{r-1}(h_1,\ldots,h_{r-1})$ that can form good pairs with $v$. Let $\mathcal{K}':=\bigcup_{v\in U_r'}\mathcal{K}'_v$. Since $|U_r'|\ge(1-(r-1)\varepsilon_r)|U_r|$, there are at least 
\begin{align}\label{eq1}
   &|U_r'|\cdot\min\{|\mathcal{K}'_v|: v\in U_r'\}|\ge(1-(r-1)\varepsilon_r)|U_r|\cdot \frac{1}{2}c(r-1,\frac{d}{2},h')\prod_{i=1}^{r-1} |U_i'|^{h_i} \nonumber \\
\ge&\frac{1}{4}c(r-1,\frac{d}{2},h')|U_r|\prod_{i=1}^{r-1} |U_i'|^{h_i}=c_1|U_r|\prod_{i=1}^{r-1} |U_i|^{h_i}
\end{align}
good pairs $(v,K)$ with center $v\in U_r'$ and $K\in \mathcal{K}'$, where $c_1=\frac{1}{4}c(r-1,\frac{d}{2},h')\cdot(d-\eps_r)^{\sum_{i=1}^{r-1} h_i}$.

Now we shall greedily embed compatible copies of $K_{r}(h_1,\ldots,h_{r})$ by extending every element inside $\mathcal{K}'$.
Here we define \[\mathcal{X}_1:=\{K\in \mathcal{K}'\mid K~\text{is contained in at least}~c_1 |U_r|/2~\text{good pairs with centers in}~U_r'\}.\]
\begin{claim}\label{cl1}
The following statements hold.
  \begin{enumerate}
     \item [(1)] $|\mathcal{X}_1|\ge \frac{c_1}{2}\prod_{i=1}^{r-1}|U_i|^{h_i}$;
     \item [(2)] every compatible copy $K\in \mathcal{X}_1$ is contained in at least $c_2 |U_r|^{h_r}$ compatible copies of $K_{r}(h_1,\ldots,h_{r})$ in $G[U_1,\ldots, U_{r}]$, where $c_2$ will be determined later.
   \end{enumerate}
\end{claim}
\begin{proof}
  We first show (2). For every $K\in \mathcal{X}_1$, we denote by $C(K)$ the set of vertices $v\in U_r$ which together with $K$ forms a good pair. Then $|C(K)|\ge c_1 |U_r|/2$. To extend a fixed $K\in \mathcal{X}_1$ into a compatible copy of $K_{r}(h_1,\ldots,h_{r})$, we can iteratively choose $h_r$ proper vertices $v_1,\ldots, v_{h_r}$ from $C(K)$ in the following way:
\begin{itemize}
  \item choose $v_1\in C(K)$ such that $V(K)\cup \{v_1\}$ induces a compatible copy of $K_r(h_1,\ldots,h_{r-1},1)$;
  \item for $2\le i\le h_r$, suppose that $v_1,\ldots,v_{i-1}$ have been given, then choose $v_i\in C(K)$ such that $V(K)\cup \{v_1,\ldots,v_i\}$ induces a compatible copy of $K_r(h_1,\ldots,h_{r-1},i)$.
\end{itemize}

Note that there are at least $c_1|U_r|/2-2\mu n\cdot e(K_{r-1}(h_1,\ldots,h_{r-1}))\ge c_1|U_r|/4$ choices for $v_1$, where the term $2\mu n\cdot e(K_{r-1}(h_1,\ldots,h_{r-1}))$ bounds the number of vertices $u\in C(K)$ for which there exists an incompatible pair $(uv,vw)$ of edges with $vw\in E(K)$. Similarly, there are at least \[c_1|U_r|/2-(i-1)-2\mu n\cdot e(K_{r-1}(h_1,\ldots,h_{r-1}))-\mu n\cdot (i-1)\sum_{i=1}^{r-1}h_i\ge c_1|U_r|/4\] choices for $v_i$.
Hence, there are at least $(c_1|U_r|/4)^{h_r}/h_r!=c_2|U_r|^{h_r}$ compatible copies of $K_{r}(h_1,\ldots,h_{r})$ in $G[U_1,\ldots, U_{r}]$ containing $K$, where $c_2=(c_1/4)^{h_r}/h_r!$.\medskip

The proof of (1) is easy by the double counting and we write $\mathcal{X}_2=\mathcal{K}'\setminus\mathcal{X}_1$. By inequality \eqref{eq1}, the number $N$ of good pairs $(v,K)$ with centers $v\in U_r'$ and $K\in \mathcal{K}'$ satisfy \[c_1|U_r|\prod_{i=1}^{r-1} |U_i|^{h_i}\le N\le |\mathcal{X}_2|\cdot c_1 |U_r|/2+|\mathcal{X}_1|\cdot|U_r|,\] which implies that $|\mathcal{X}_1|\ge \frac{c_1}{2}\prod_{i=1}^{r-1}|U_i|^{h_i}.$
\end{proof}

Thus, the proof easily follows from Claim~\ref{cl1} by letting $c(r,d,h)=c_1c_2/2$ and $\varepsilon^\ast=\varepsilon_r$ as there are at least \[|\mathcal{X}_1|\cdot c_2 |U_r|^{h_r}\ge \frac{c_1c_2}{2} \prod_{i=1}^{r}|U_i|^{h_i}=c(r,d,h)\prod_{i=1}^{r}|U_i|^{h_i}\] compatible copies of $K_{r}(h_1,\ldots,h_{r})$ in $G[U_1,\ldots,U_r]$, each containing exactly $h_i$ vertices in $U_i$ for every $i\in[r]$.


\end{proof}

\section{Building an absorbing set}\label{sec4}

The construction of an absorbing set has now been known via a novel idea of Montgomery~\cite{Montgomery} in a series of works \cite{chang2021}, provided that (almost) every set of $h$ vertices has linearly many vertex-disjoint absorbers. In this paper we make use of a `compatible' variant as follows, whose proof
follows from that of Lemma 2.2 in \cite{NP} and will be included in Appendix~\ref{A1}.

\begin{lemma}\label{abs set}
Let $t\in \mathbb{N}$, $H$ be an $h$-vertex graph and $\gamma>0$. Then there exists $\xi=\xi(t,h,\gamma)>0$ such that the following holds. If $(G,\mathcal{F})$ is an incompatibility system with $|V(G)|=n$ such that for every $S\in {V(G)\choose h}$ there is a family of at least $\gamma n$ vertex-disjoint $(H,t)$-absorbers, then $G$ contains a $\xi$-absorbing set of size at most $\gamma n$.
\end{lemma}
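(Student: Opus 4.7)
The plan is to follow the argument in the proof of Lemma~2.2 of~\cite{NP}, with the straightforward observation that nothing in that argument is sensitive to the compatibility constraints: all the absorbers considered are compatible by definition of $(H,t)$-absorber, and any union of vertex-disjoint compatible subgraphs remains compatible, since two edges sharing a vertex always lie in a single piece of the partition. The construction proceeds in two stages: a probabilistic sampling which produces a small random set containing many absorbers for each $h$-set, followed by a greedy extraction of a vertex-disjoint ``template'' of absorbers covering all $h$-sets many times.

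First, using the hypothesis, fix for each $S\in\binom{V(G)}{h}$ a family $\mathcal{A}_S$ of $\gamma n$ vertex-disjoint $(H,t)$-absorbers. Sample $T\subseteq V(G)$ by including each vertex independently with probability $p:=\gamma/(4h^2 t)$. A Chernoff bound gives $|T|\le\gamma n/2$ with high probability. Moreover, for every fixed $S$, the events $\{V(A)\subseteq T\}$ over $A\in\mathcal{A}_S$ are mutually independent because the absorbers in $\mathcal{A}_S$ are vertex-disjoint, each occurring with probability at least $p^{h^2 t}$. A further Chernoff bound combined with a union bound over the at most $n^h$ many $h$-sets $S$ shows that, with positive probability, simultaneously for every $S$ at least $\mu n$ absorbers of $\mathcal{A}_S$ are contained in $T$, where $\mu:=p^{h^2 t}\gamma/2$. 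Fix a set $T$ enjoying both properties.

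From within $T$, I would extract a vertex-disjoint subfamily $\mathcal{T}$ of $(H,t)$-absorbers with the following coverage property: for every $h$-set $S$, at least $M:=\xi n/h+1$ members of $\mathcal{T}$ are absorbers for $S$, while $|\bigcup_{B\in\mathcal{T}}V(B)|\le\gamma n$. Set $A:=\bigcup_{B\in\mathcal{T}}V(B)\subseteq T$; then $A$ has size at most $\gamma n$, and $G[A]$ inherits a compatible $H$-factor from the disjoint union of the compatible $H$-factors of the individual $B$'s (compatibility across the $B$'s being automatic by vertex-disjointness). To verify the $\xi$-absorbing property, choose $\xi>0$ sufficiently small (so that $\xi ht<\mu$): given any $R\subseteq V(G)\setminus A$ with $|R|\le\xi n$ and $|A\cup R|\in h\mathbb{N}$, partition $R$ into $h$-sets $S_1,\ldots,S_\ell$ with $\ell\le\xi n/h$. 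Iterating over $i=1,\ldots,\ell$, pick a fresh $B_{k_i}\in\mathcal{T}$ that is an absorber for $S_i$; such a choice exists by the coverage property together with the fact that at most $\ell-1<M$ absorbers have been used. Replacing the compatible $H$-factor of each used $B_{k_i}$ by the compatible $H$-factor of $G[B_{k_i}\cup S_i]$ and keeping the original factors of the unused $B\in\mathcal{T}$, we obtain pairwise vertex-disjoint compatible pieces whose union is a compatible $H$-factor of $G[A\cup R]$.

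The main obstacle is the extraction of the family $\mathcal{T}$: it must be simultaneously vertex-disjoint and richly covering with total vertex budget only $\gamma n$. This is exactly the technical heart of the Nenadov--Pehova argument and is handled there via a careful iterative construction that exploits the vertex-disjointness of each $\mathcal{A}_S$ (so that any already-chosen absorber blocks at most one candidate in any $\mathcal{A}_S$) together with the abundance $\mu n$ guaranteed in the previous paragraph. Since the extraction depends only on set-theoretic abundance and disjointness, and the only ``graph'' conditions we use (the two compatible $H$-factors) are already built into the absorber definition, the argument transfers verbatim to the incompatibility-system setting.
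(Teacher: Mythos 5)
Your overall stance — that compatibility constraints are orthogonal to the absorbing-set construction, since a union of vertex-disjoint compatible subgraphs is automatically compatible — is correct and matches the paper's viewpoint. The paper realises this cleanly by reducing to a ready-made hypergraph black box: it builds the auxiliary $h$-uniform hypergraph $\mathcal{G}$ on $V(G)$ whose edges are exactly the compatible copies of $H$, and applies Lemma~\ref{chang} of \cite{chang2020factors} to $\mathcal{G}$. Your route instead tries to re-derive the absorbing-set construction from scratch, following what you describe as the Nenadov--Pehova argument; that would be a legitimate alternative if carried out correctly.

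However, the extraction step is not workable as stated, and this is a genuine gap. You ask for a vertex-disjoint subfamily $\mathcal{T}$ with $\bigl|\bigcup_{B\in\mathcal{T}}V(B)\bigr|\le\gamma n$ such that \emph{every} $h$-set $S\in\binom{V(G)}{h}$ has at least $M=\xi n/h+1$ absorbers in $\mathcal{T}$. Since the members of $\mathcal{T}$ are vertex-disjoint and each contains a copy of $H$, we have $|\mathcal{T}|\le\gamma n/h$. Double-counting (absorber, $S$)-incidences, the required coverage would force the average absorber in $\mathcal{T}$ to serve a sizeable positive fraction of all $\binom{n}{h}$ many $h$-sets $S$. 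Nothing in the hypothesis gives this: it only asserts that for each fixed $S$ there exist $\gamma n$ vertex-disjoint absorbers, with no control on overlap across different $S$, so in the worst case each absorber works for a single $S$. The ``iterative construction'' you invoke cannot repair this — after adding $M$ absorbers for even a constant number of $h$-sets $S$ you have already exhausted the entire $\gamma n$ vertex budget. Moreover, this is not actually what Nenadov--Pehova do: their argument chooses a sparse ``robustly matchable'' bipartite template \`a la Montgomery together with a small reservoir of $O(\xi n)$ disjoint $h$-sets, demands absorbers only for those reservoir $h$-sets, and routes an arbitrary leftover $R$ through the reservoir using the template's matching robustness. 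Your absorption mechanism (``partition $R$ into arbitrary $h$-sets $S_1,\ldots,S_\ell$ and absorb each with a fresh $B\in\mathcal{T}$'') genuinely needs coverage of \emph{all} $h$-sets at once, so the gap is intrinsic to your scheme and not a matter of tightening constants; the template/reservoir idea is the missing ingredient.
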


\subsection{Finding absorbers}\label{sec4.1}
As aforementioned, the first main task is to find for every $h$-set of vertices linearly many vertex-disjoint absorbers. To achieve this, we first state a crucial notion of $H$-reachability as follows, which is slightly different from the original version by Lo and Markstr\"om~\cite{2015Lo}.

\begin{defn}[Reachability \& Closedness]
Let $m,t\in \mathbb{N}$, $H$ be an $h$-vertex graph and $G$ be an $n$-vertex graph.
We say that two vertices $u,v\in V(G)$ are \emph{$(H,m,t)$-reachable} if for any set $W$ of $m$ vertices, there is a set $S\subseteq V(G)\setminus W$ of size at most $ht-1$ such that both $G[S\cup\{u\}]$ and $G[S\cup\{v\}]$ have compatible $H$-factors, where we call such $S$ an \emph{$H$-connector} for $u,v$. We say $V(G)$ is \emph{$(H,m,t)$-closed} if every two vertices $u,v$ in $V(G)$ are $(H,m,t)$-reachable. 
\end{defn}

The following result guarantees that every $h$-vertex set has a family of linearly many vertex-disjoint absorbers as long as $V(G)$ is $(H,\beta n,t)$-closed for some $\beta>0$ and $t\in \mathbb{N}$. Its proof follows from that of Lemma 3.9 in \cite{HMWY} and will be included in Appendix~\ref{A1} for completeness.

\begin{lemma}[Lemma 3.9 in \cite{HMWY}]\label{closed}
Given $h,t\in \mathbb{N}$ with $h\ge 3$ and $\beta>0$, the following holds for any $h$-vertex graph $H$ and sufficiently large $n \in \mathbb{N}$. If $(G,\mathcal{F})$ is an incompatibility system with $|V(G)|=n$ such that $V(G)$ is $(H,\beta n,t)$-closed, then every $S\in {V(G)\choose h}$ has at least $\frac{\beta}{h^3t}n$ vertex-disjoint $(H,t)$-absorbers.
\end{lemma}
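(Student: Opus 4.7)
The plan is to fix an arbitrary $h$-set $S=\{v_1,\ldots,v_h\}$ and build vertex-disjoint $(H,t)$-absorbers for $S$ one at a time by a greedy procedure. Each absorber $A_S$ will have the following \emph{swap} structure: a compatible pivot copy $K$ of $H$ on vertices $\{x_1,\ldots,x_h\}\subseteq V(G)\setminus S$, together with pairwise disjoint $H$-connectors $T_1,\ldots,T_h$, where $T_i$ witnesses the $(H,\beta n,t)$-reachability of the pair $(v_i,x_i)$. The set $A_S:=V(K)\cup\bigcup_{i=1}^h T_i$ then satisfies $|A_S|\le h+h(ht-1)=h^2t$, and $h\mid|A_S|$ because each $|T_i|\equiv-1\pmod{h}$.

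Given a vertex set $U$ of size at most $jh^2t$ committed to the $j$ previously constructed absorbers, I would first produce the pivot $K$ by picking any two distinct vertices $u,u'\in V(G)\setminus(U\cup S)$ and applying closedness to $(u,u')$ with forbidden set $U\cup S$; the resulting set $T\cup\{u\}$ carries a compatible $H$-factor, any member of which serves as a compatible copy $K$ of $H$ disjoint from $U\cup S$. Labelling $V(K)=\{x_1,\ldots,x_h\}$ arbitrarily, I would then, for $i=1,\ldots,h$, apply closedness to $(v_i,x_i)$ with forbidden set $U\cup S\cup V(K)\cup \bigcup_{j'<i}T_{j'}$ to produce $T_i$. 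These forbidden sets stay below $\beta n$ in size as long as $(j+1)h^2t\le \beta n$.

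The crux is the swap identity. The family $\{T_i\cup\{x_i\}\}_{i=1}^h$ partitions $A_S$ into vertex-disjoint blocks, each carrying a compatible $H$-factor by its reachability witness $T_i$, so stitching them produces a compatible $H$-factor of $G[A_S]$; compatibility across blocks is automatic because every incompatibility is local to a single vertex and the blocks are pairwise vertex-disjoint. Dually, the family $\{V(K)\}\cup\{T_i\cup\{v_i\}\}_{i=1}^h$ partitions $A_S\cup S$ into blocks each carrying a compatible $H$-factor (the pivot copy $K$ for $V(K)$, and the reachability witness for each $T_i\cup\{v_i\}$), yielding a compatible $H$-factor of $G[A_S\cup S]$.

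Since each absorber uses at most $h^2t$ vertices, the greedy procedure produces at least $\lfloor \beta n/(h^2 t)\rfloor\ge \beta n/(h^3 t)$ absorbers for $n$ sufficiently large. I expect the main subtlety to be conceptual rather than computational: isolating the pivot-plus-connectors template so that the two required compatible $H$-factors of $G[A_S]$ and $G[A_S\cup S]$ coexist via a single swap of $K$ for the copies $T_i\cup\{x_i\}$. Once that template is fixed, the remaining work is routine bookkeeping of the at most $h+1$ applications of closedness per absorber, and the $h^2t$ vertex budget comfortably accommodates all the forbidden sets involved.
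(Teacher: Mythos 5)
Your proposal is correct and takes essentially the same approach as the paper: a pivot compatible copy of $H$ (extracted from a connector produced by closedness applied to an arbitrary pair of free vertices) together with $h$ vertex-disjoint $H$-connectors, one per pair $(v_i,x_i)$, forms the absorber, and the two compatible $H$-factors of $G[A_S]$ and $G[A_S\cup S]$ arise from exactly the pivot swap you describe. The paper phrases the count as a maximality-plus-contradiction argument rather than a direct greedy tally, but the underlying construction, the $\le h^2t$ vertex budget per absorber, and the cross-block compatibility observation are the same.
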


Note that the original versions of Lemmas \ref{abs set} and \ref{closed} deal with absorbing sets and absorbers without the context of incompatibility systems. By going through their proofs, the two lemmas are also valid for compatible version. Based on Lemmas~\ref{abs set} and \ref{closed}, it suffices to show that $V(G)$ is closed.


\begin{lemma}\label{partition into B and U}
Let $H$ be an $h$-vertex graph and $\chi(H)\ge 2$. For any $\alpha>0$, there exist $\mu,\beta>0$ and $t\in \mathbb{N}$ such that for any sufficiently large $n$, if $(G,\mathcal{F})$ is an $\left(n,1-\frac{1}{\chi^*(H)}+\alpha,\mu\right)$-incompatibility system, then $V(G)$ is $(H,\beta n,t)$-closed.
\end{lemma}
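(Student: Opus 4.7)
The plan is to follow the reachability framework of Lo and Markström~\cite{2015Lo}. I shall first establish a \emph{local reachability} statement — for some constant $t_0=t_0(H)$ and $\gamma>0$, every $v\in V(G)$ is $(H,\beta n,t_0)$-reachable to at least $\gamma n$ other vertices — and then promote this to full closedness via a structural analysis in the reduced graph arising from Szemerédi's Regularity Lemma.

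For the local step, apply the Regularity Lemma to $G$ to obtain an $(\varepsilon,d)$-regular partition $V(G)=V_0\cup V_1\cup\cdots\cup V_k$ and a reduced graph $R$ with $\delta(R)\ge(1-1/\chi^*(H)+\alpha/2)k$. As in the proof of Lemma~\ref{almost}, $R$ admits an almost-perfect tiling by copies of $K_r$ or by bottle-graphs (with $r=\chi(H)$). Fix a copy of $K_r$ on clusters $V_{i_1},\ldots,V_{i_r}$, and consider two vertices $u,v\in V_{i_1}$. Applying Lemma~\ref{K_h^r} inside $G[V_{i_1}\setminus\{u,v\},V_{i_2},\ldots,V_{i_r}]$ with part sizes $(h_1-1,h_2,\ldots,h_r)$ (where $h_j$ denotes the $j$-th colour class size of a fixed proper $r$-colouring of $H$), one obtains $\Omega(n^{h-1})$ compatible copies $T$ of $K_r(h_1-1,h_2,\ldots,h_r)$ avoiding $\{u,v\}$. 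All but an $O(\mu)$-fraction of these $T$ are such that $V(T)\cup\{u\}$ and $V(T)\cup\{v\}$ each span a compatible copy of $H$, since each of $u,v$ has at most $\mu n$ incompatibility pairs at itself. Each surviving $T$ witnesses $(H,\beta n,1)$-reachability of $u$ and $v$, giving the desired $\gamma n$ reachable partners for every $v\in V(G)$ (vertices in $V_0$ are handled by first absorbing them into a compatible partial $H$-tiling).

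For the global step, define the auxiliary graph $G^*$ on $V(G)$ with edges given by $(H,\beta n,t_0)$-reachable pairs, so that $\delta(G^*)\ge\gamma n$. Reachability is transitive up to constants: given disjoint connectors $S_1$ for $u,w$ and $S_2$ for $w,v$, the set $S=S_1\cup\{w\}\cup S_2$ of size at most $2ht_0-1$ is a connector for $u,v$, since $G[S\cup\{u\}]$ decomposes as the vertex-disjoint union of the compatible $H$-factors on $S_1\cup\{u\}$ and on $S_2\cup\{w\}$ (automatically compatible as a whole, as they share no vertex), and symmetrically for $G[S\cup\{v\}]$. This reduces closedness of $V(G)$ to connectivity of $G^*$. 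To prove $G^*$ is connected, analyse the \emph{index-vector lattice} of compatible copies of $H$ across the tiling of $R$: in the $\chi^*(H)=\chi_{cr}(H)$ case, the hypothesis $\hcf(H)=1$ implies that the index vectors generated by compatible $H$-copies span a full-rank sublattice, so vertices in any two clusters can be interchanged through a short chain of overlapping compatible $H$-copies; in the $\chi^*(H)=\chi(H)$ case, the stronger hypothesis $\delta(G)\ge(1-1/\chi(H)+\alpha)n$ defeats the divisibility obstructions exemplified by Proposition~\ref{ko}. In either case $G^*$ becomes connected of diameter $O(1)$, and hence $V(G)$ is $(H,\beta n,t)$-closed for some constant $t=t(H)$.

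The main obstacle is to faithfully track compatibility through these chained operations. Single-step reachability is cleanly handled by Lemma~\ref{K_h^r}, which loses only an $O(\mu)$ fraction of compatible copies, but concatenating $O(1)$ swaps sharing a common vertex $w$ requires avoiding incompatibilities at $w$; controlling this loss forces $\mu\ll\beta/h^{O(t_0)}$, fitting the hierarchy $\mu\ll\beta\ll\alpha,1/h$. A more delicate obstacle is the $\chi^*(H)$ dichotomy: the lattice-connectivity step must use distinct proper $r$-colourings of $H$ to generate enough shift-vectors, so the definition of $\hcf(H)$ via colour-class gaps in $D(H)$ must be translated into concrete compatible swaps realisable under the given minimum-degree condition.
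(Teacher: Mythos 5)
Your \emph{local} step and your \emph{transitivity} observation essentially reproduce what the paper does: the 1-reachability argument via the regularity lemma and Lemma~\ref{K_h^r} is the content of Lemma~\ref{1-reach}, and the concatenation of connectors is the content of Lemma~\ref{concatenate} (though your worry about ``avoiding incompatibilities at $w$'' is unfounded -- the two $H$-factors on $S_1\cup\{u\}$ and $S_2\cup\{w\}$ are vertex-disjoint, so their union is automatically compatible, exactly as you observe in the previous sentence). The one minor issue in the local step is the remark that ``vertices in $V_0$ are handled by first absorbing them into a compatible partial $H$-tiling'': this does not make sense in the reachability framework and is not what the paper does. In Lemma~\ref{1-reach} an arbitrary $u\in V(G)$, including those in $V_0$, is handled by counting its degree into clusters and choosing the tuple of clusters $V_1,\dots,V_{r}$ adapted to $N(u)$; no absorption is involved at this stage.

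The genuine gap is the \emph{global} step. Having $\delta(G^*)\ge\gamma n$ for your auxiliary reachability graph gives at most $\lceil 1/\gamma\rceil$ components, not connectivity, and bounded diameter certainly needs an argument. The paper addresses this with a two-stage scaffold that your sketch elides: first, Lemma~\ref{partition lem} produces a \emph{reachability partition} $\mathcal P=\{V_1,\dots,V_p\}$ into a bounded number of parts, each of which is $(H,\beta_2 n,t_2)$-closed; then, assuming $p>1$, one applies the regularity lemma to $G$ with clusters \emph{refining} $\mathcal P$, and shows that a transferral $\mathbf u_i-\mathbf u_j$ lies in the lattice $L_{\mathcal P}^{\beta'}(G)$ generated by index vectors of robust compatible $H$-copies \emph{with respect to $\mathcal P$} (not with respect to the cluster partition, which is what your sketch analyses). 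Lemma~\ref{transferral lem} then converts such a transferral into closedness of $V_i\cup V_j$, and iterating kills all parts but one. This partition/merge framework is essential: without it there is no mechanism for turning ``lattice information'' into actual $H$-connectors between two fixed vertices. Moreover, establishing the transferral is itself the technically delicate step where the $\chi^*(H)$ dichotomy enters: when $\hcf(H)=1$ and $\chi(H)\ge 3$ one finds a crossing $K_r$ in the reduced graph and uses all colourings $\phi\in\mathcal C(H)$ together with a B\'ezout combination over $D(H)$ (Claim~\ref{crossing}); when $\chi(H)=2$, $\hcf_c(H)=1$, $\hcf_\chi(H)\le 2$ one separately exploits the components $C_1,\dots,C_\ell$ of $H$ and a parity argument. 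Your one-line claim that $\hcf(H)=1$ makes the index vectors ``span a full-rank sublattice'' is aimed in the right direction but does not identify the right lattice, does not use the partition $\mathcal P$, and does not address the bipartite disconnected case at all.
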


\begin{proof}[\textbf{Proof of Lemma \ref{abs}}]
Given $\alpha,\sigma>0$ and an $h$-vertex graph $H$, we choose
\[\frac{1}{n}\ll \mu,\xi\ll \beta,\frac{1}{t}\ll \alpha,\sigma, \frac{1}{h},\] and let $(G,\mathcal{F})$ be an $\left(n,1-\frac{1}{\chi^*(H)}+\alpha,\mu\right)$-incompatibility system.
By Lemma \ref{partition into B and U}, $V(G)$ is $(H,\beta n,t)$-closed. By applying Lemma \ref{closed} to $G$, every $S\in {V(G)\choose h}$ has at least $\frac{\beta}{h^3t}n$ vertex-disjoint $(H,t)$-absorbers. Thus Lemma \ref{abs set} applied to $G$ with $\gamma=\min\{\frac{\beta}{h^3t}, \sigma\}$ immediately gives a $\xi$-absorbing set $A$ of size at most $\gamma n$. 

\end{proof}

\subsection{Lattice-based absorbing method}\label{sec4.2}
The proof of Lemma~\ref{partition into B and U} makes use of the lattice-based absorbing method developed by Han~\cite{Han2017Decision}, Keevash, Knox and Mycroft~\cite{kkmpoly15}. To illustrate this, we first need some notions introduced in \cite{kkmpoly15}. For any vector $\mathbf{x} = (x_{1},\ldots,x_{k} )\in \mathbb{Z}^{k}$, let $|\mathbf{x}| := \sum_{i=1}^{k}x_{i}$. We say that $\mathbf{x} \in \mathbb{Z}^{k}$ is an \emph{$r$-vector} if it has non-negative coordinates and $|\mathbf{x}| = r$.

\begin{defn}
Let $(G,\mathcal{F})$ be an incompatibility system with $|G|=n$ and $\mathcal{P} = \{V_{1},\ldots,V_{k}\}$ be a vertex partition of $V(G)$ for some integer $k \ge 1$.
  \begin{enumerate}
     \item The \emph{index vector} $\mathbf{i}_{\mathcal{P}}(S) \in \mathbb{Z}^{k}$ of a subset $S \subseteq V(G)$ with respect to $\mathcal{P}$ is the vector whose $i$th coordinate is the size of the intersection of $S$ with $V_i$, i.e. $|S\cap V_i|$, for every $i\in[k]$.
     \item Given an $h$-vertex graph $H$ and a constant $\beta > 0$, an $h$-vector $\mathbf{i} \in \mathbb{Z}^{k}$ is called $(H,\beta)$-\emph{robust} if for any set $W\subseteq V(G)$ of size $\be n$, $G-W$ contains a compatible copy of $H$ whose vertex set has the index vector $\mathbf{i}$ with respect to $\mathcal{P}$.
     \item Denote by $I_{\mathcal{P}}^{\beta}(G)$ the set of all $(H,\beta)$-robust $h$-vectors $\mathbf{i} \in \mathbb{Z}^{k}$. Let $L_{\mathcal{P}}^{\beta }(G)$ denote the lattice (additive subgroup) in $\mathbb{Z}^{k}$ generated by the elements of $I_{\mathcal{P}}^{\beta}(G)$.
     \item For every $j\in [k]$, let $\mathbf{u}_{j} \in \mathbb{Z}^{k}$ be the $j$th unit vector, i.e. $\mathbf{u}_j$ has $1$ on the $j$th coordinate and 0 on the other coordinates.
     \item A \emph{transferral} is a vector of the form $\mathbf{u}_{i}-\mathbf{u}_{j}$ for some distinct $i,j \in [k]$.
   \end{enumerate}
\end{defn}

As stated in Lemma \ref{partition into B and U}, we shall prove that $V(G)$ is $(H,\beta n,t)$-closed for some $\beta>0$ and $t\in \mathbb{N}$.
Our proof adopts a less direct approach as follows and builds on the merging techniques in \cite{HMWY}:
\begin{description}
  \item [$(1)$] \textbf{$1$-reachability:} We first show that every vertex in $V(G)$ is $1$-reachable to linearly many vertices (see Lemma~\ref{1-reach});
  \item [$(2)$] \textbf{Reachablility partition:} Then construct a partition $\mathcal{P}=\{V_1,\ldots,V_p\}$ of $V(G)$ such that every $V_i$ is closed (see Lemma~\ref{partition lem});
  \item [$(3)$] \textbf{Merging process:} Lemma~\ref{transferral lem} allows us to merge two parts $V_i,V_j$ into a larger (still closed) part, provided that there exists a transferral $\mathbf{u}_{i}-\mathbf{u}_{j}$ in the lattice $L_{\mathcal{P}}^{\beta}(G)$ for some $\beta>0$. Then the arguments often reduce
to iteratively detecting the existence of a transferral with respect to the up-to-date partition.
\end{description}

Here we collect a couple of lemmas as sketched above which together decode the proof of Lemma \ref{partition into B and U}.

\begin{lemma}\label{1-reach}
Let $H$ be an $h$-vertex graph and $\chi(H)=r\ge 2$. For any $\alpha>0$, there exist $\mu,\beta_1,\gamma_1>0$ such that for any sufficiently large $n$, if $(G,\mathcal{F})$ is an $\left(n,1-\frac{1}{r-1}+\alpha,\mu\right)$-incompatibility system, then every vertex $u\in V(G)$ is $(H,\beta_1 n,1)$-reachable to at least $\gamma_1 n$ other vertices.
\end{lemma}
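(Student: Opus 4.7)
The plan is to combine Szemer\'edi's regularity lemma with the compatible counting lemma (Lemma~\ref{K_h^r}) to produce, for each vertex $u\in V(G)$, linearly many ``swap partners'' $v$ inside a common cluster of a regular partition of $V(G)$.

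I would first apply Lemma~\ref{reg lem} to $G$ with $\mu\ll\varepsilon\ll d\ll\alpha$, obtaining an $(\varepsilon,d)$-regular partition $V(G)=V_0\cup V_1\cup\cdots\cup V_k$ and a spanning subgraph $G'$. The reduced graph $R$ inherits $\delta(R)\ge\bigl(1-\tfrac{1}{r-1}+\tfrac{\alpha}{2}\bigr)k$, so by Tur\'an's theorem every cluster of $R$ lies in a copy of $K_r$. For the given $u$, let $V_i$ be its cluster (the $O(\varepsilon n)$ vertices of $V_0$ are absorbed into a slightly smaller $\gamma_1$, or treated by picking any cluster in $N(u)$). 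Fix a $K_r$ $\{V_i,V_{j_2},\ldots,V_{j_r}\}$ in $R$ through $V_i$; by Fact~\ref{large deg}, $u$ may be taken to be \emph{typical}, i.e., with $|N(u)\cap V_{j_s}|\ge(d-\varepsilon)m$ for every $s\ge 2$. Fix an $r$-coloring of $H$ with part sizes $1\le h_1\le\cdots\le h_r$ summing to $h$, so that $H\subseteq K_r(h_1,\ldots,h_r)$.

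The core step is the following swap count. For each $v\in V_i\setminus\{u\}$ satisfying $|N(u)\cap N(v)\cap V_{j_s}|\ge(d-\varepsilon)^2 m/2$ for every $s\ge 2$ (which holds for all but $O(\varepsilon m)$ choices of $v$, by applying regularity to the set $N(v)\cap V_{j_s}\subseteq V_{j_s}$), apply Lemma~\ref{K_h^r} after suitable slicing (Fact~\ref{slicing lem}) to the tuple $\bigl(V_i\setminus\{u,v\},\,N(u)\cap N(v)\cap V_{j_2},\,\ldots,\,N(u)\cap N(v)\cap V_{j_r}\bigr)$ with part sizes $(h_1-1,h_2,\ldots,h_r)$ (dropping the first coordinate when $h_1=1$). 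This yields at least $c_1 m^{h-1}$ compatible copies $K'$ of the corresponding complete multipartite graph. Each $K'$ extends by $u$ (placed in part $1$) to a copy of $K_r(h_1,\ldots,h_r)\supseteq H$ on $\{u\}\cup V(K')$ and, symmetrically, by $v$ to another such copy on $\{v\}\cup V(K')$. Fact~\ref{fact1} together with the $\mu n$-boundedness bound by $O(\mu n^{h-1})$ the number of $K'$ for which at least one of these two extensions is incompatible. Hence for $\mu$ sufficiently small, at least $c_2 m^{h-1}$ sets $S:=V(K')$ simultaneously witness that $\{u\}\cup S$ and $\{v\}\cup S$ contain compatible copies of $H$.

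Consequently at least $(1-O(\varepsilon))m\ge\gamma_1 n$ vertices $v\in V_i$ each admit $\ge c_2 m^{h-1}$ witness sets $S$. Since any forbidden set $W$ with $|W|\le\beta_1 n$ intersects at most $\beta_1 n\binom{n}{h-2}$ such $S$, choosing $\beta_1$ sufficiently small guarantees a witness $S$ disjoint from $W$ and certifies $(H,\beta_1 n,1)$-reachability of $u$ to each such $v$. The main obstacle is the swap step: one must simultaneously guarantee compatibility of \emph{two} extensions of the same $K'$, and each extension introduces up to $h-h_1$ new edges that could create incompatibilities either at the new centre or at the vertices of $K'$ they touch. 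This is resolved precisely by choosing $\mu$ small enough in terms of $h,r,d,k$ so that the $O(\mu n^{h-1})$ bound from Fact~\ref{fact1} is negligible against the $c_1 m^{h-1}$ main term produced by Lemma~\ref{K_h^r}.
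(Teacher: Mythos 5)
Your overall template (regularity lemma plus the compatible counting Lemma~\ref{K_h^r} plus Fact~\ref{fact1} to discard incompatible extensions) is the same as the paper's, and the final counting step is sound. But there is a genuine gap in how you single out a good $K_r$-structure for an \emph{arbitrary} vertex $u$. You pick $V_i$ to be the cluster containing $u$, fix a copy of $K_r$ in $R$ through $V_i$, and then assert ``by Fact~\ref{large deg}, $u$ may be taken to be typical, i.e., $|N(u)\cap V_{j_s}|\ge(d-\varepsilon)m$ for each $s\ge 2$.'' Fact~\ref{large deg} only says that \emph{most} vertices of $V_i$ are typical with respect to a given cluster; it says nothing about a prescribed vertex $u$, which might in fact be atypical with respect to every cluster of the chosen $K_r$ (or might lie in $V_0$, in which case it has no cluster at all). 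And the obvious repair --- chase a $K_r$ through $V_i$ in which all the remaining clusters lie in $N^{\alpha/2}(u)$ --- does not go through either: the intersection of $N_R(V_i)$ with $N^{\alpha/2}(u)$ has size only about $(1-\tfrac{2}{r-1})k$, which is too small to guarantee a $K_{r-1}$ when $\alpha$ is small (and is easily empty when $r=2$). So your construction can simply find no valid $K_r$ for some $u$.

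The paper avoids this by decoupling $u$ from the cluster structure: it never requires $u$'s own cluster to belong to the $K_r$. Instead it uses $\delta(G)\ge(1-\tfrac1{r-1}+\alpha)n$ directly to show via double counting that the set $N^{\alpha/2}$ of clusters where $u$ has at least $\alpha m/2$ neighbours has size at least $(1-\tfrac1{r-1}+\tfrac\alpha4)k$; it then finds a $K_{r-1}$ inside $N^{\alpha/2}$, extends it by a common neighbour $V_r$ in $R$ (which need not see $u$ at all), and places both $u$ and a typical $v\in V_r$ in the $r$-th part of a $K_r^h$ built from $K_r(h,\ldots,h,h-1)$. This asymmetry --- $u$ only needs large degree into $r-1$ clusters, while $v$ only needs to be typical in the remaining cluster --- is exactly what makes the argument work for every $u$. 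One small point in your favour: your connector has $h-1$ vertices, so it literally certifies $(H,\beta_1 n,1)$-reachability as stated, whereas the paper's connector has $rh-1$ vertices and really certifies $(H,\beta_1 n,r)$-reachability; so if you import the paper's choice of $K_{r-1}$ and the common-neighbour step in place of your ``$u$ is typical'' assumption, your smaller connector would be a modest improvement. As written, though, the typicality claim is the load-bearing step and it is not justified.

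Minor: finding a copy of $K_r$ through a prescribed vertex of $R$ is a consequence of the minimum-degree condition (iterating Fact~\ref{commom neighbor}), not of Tur\'an's theorem, which only produces a $K_r$ somewhere in $R$.
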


\blemma[see Lemma 4.1 in \cite{HMWY}]\label{partition lem}
For positive constants $\gamma_1,\beta_1$, an integer $h \ge 2$ and an $h$-vertex graph $H$, there exist $\beta_2=\beta_2(\gamma_1,\beta_{1},h) > 0$ and $t\in\mathbb{N}$ such that the following holds for sufficiently large $n$. Let $(G,\mathcal{F})$ be an incompatibility system with $|G|=n$ such that every vertex in $V(G)$ is $(H,\beta_1 n,1)$-reachable to at least $\gamma_1 n$ other vertices. Then there is a partition $\mathcal{P} = \{V_{1},\ldots,V_{p}\}$ of $V(G)$ with $p \le \lceil\frac{1}{\gamma_1}\rceil$ such that for every $i \in [p]$, $V_{i}$ is $(H,\beta_2n,t)$-closed and has size at least $\frac{\gamma_1}{2} n$.
\elemma

\blemma\label{transferral lem} Given any positive integers $\ell,h,t$ with $h\ge 3$, constants $\beta,\mu >0$ and an $h$-vertex graph $H$, there exist $\be'>0$ and $t'\in \mathbb{N}$ such that the following holds for sufficiently large $n$. Let $(G,\mathcal{F})$ be an incompatibility system with $|G|=n$ and $\mathcal{P}=\{V_{1},\ldots,V_{\ell}\}$ be a partition of $V(G)$ such that every $V_{i}$ is $(H,\beta n,t)$-closed. If $\mathbf{u}_{i} - \mathbf{u}_{j} \in L_{\mathcal{P}}^{\mu }(G)$ for some distinct $i,j \in [\ell]$, then $V_{i} \cup V_{j}$ is $(H,\be' n ,t')$-closed.
\elemma

We shall present the proofs of Lemmas~\ref{1-reach} and \ref{transferral lem} in Sections~\ref{sec4.3} and \ref{sec4.4}, respectively, whilst the proof of Lemma~\ref{partition lem} follows that of Lemma~4.1 in \cite{HMWY} and will be included in Appendix~\ref{A2} for completeness.
\subsection{$V(G)$ is closed}

Now we are ready to prove Lemma \ref{partition into B and U}, taking for granted Lemmas~\ref{1-reach}-\ref{transferral lem}.
\begin{proof}[Proof of Lemma \ref{partition into B and U}]
Given $\alpha>0$ and an $h$-vertex graph $H$ with $\chi(H)=r\ge 2$, we choose \[\frac{1}{n}\ll\mu,\beta,\frac{1}{t}\ll \alpha,\frac{1}{h}\]
and let $(G,\mathcal{F})$ be an $\left(n,1-\frac{1}{\chi^*(H)}+\alpha,\mu\right)$-incompatibility system.
By Lemma \ref{1-reach}, we obtain that every vertex in $V(G)$ is $(H,\beta_1 n,1)$-reachable to at least $\gamma_1 n$ other vertices in $V(G)$. Then applying Lemma~\ref{partition lem} to $G$, we obtain a partition $\mathcal{P}_{0} = \{V_{1},\ldots,V_{p}\}$ of $V(G)$ for some integer $p \le \lceil\frac{1}{\gamma_1}\rceil$, where every $V_{i}$ is $(H, \beta_2n,t_2)$-closed and $|V_{i}|\ge \frac{\gamma_1n}{2}$.

We shall iteratively merge as many distinct parts as possible. As an intermediate step, we choose positive constants \[\be=\be_{p+1}\ll\be'_{p}\ll\be_{p}\ll\cdots\ll\be_3'\ll\be_3\ll\be_2'\ll\be_2 ~\text{and}~ \frac{1}{t}=\frac{1}{t_{p+1}}\ll\frac{1}{t_{p}}\cdots\ll\frac{1}{t_{3}}\ll\frac{1}{t_2}.\] Our merging procedure is stated as follows: at the step $s$ ($s\ge1$), based on the current partition $\mathcal{P}_{s-1} = \{V_1,\ldots,V_{p'}\}$ with $p'=p-s+1\in\mathbb{N}$ and every $V_i$ being $(H,\beta_{s+1}n,t_{s+1})$-closed, if there exist distinct $i,j\in [p']$ such that $\textbf{u}_i-\textbf{u}_j\in L^{\be'_{s+1}}_{\mathcal{P}_{s-1}}(G)$, then we merge $V_i,V_j$ as a new part. In fact, by applying Lemma~\ref{transferral lem} with $\beta=\beta_{s+1},\mu=\be'_{s+1}$ and $t=t_{s+1}$, we obtain that $V_i\cup V_j$ is $(H,\beta_{s+2}n,t_{s+2})$-closed. By renaming if necessary, we end the $s$th step with a new partition $\mathcal{P}_s= \{V_1,\ldots,V_{p'-1}\}$ such that every $V_i$ is $(H,\beta_{s+2}n,t_{s+2})$-closed. In this way, we continue until the procedure terminates after at most $p-1$ steps.

Suppose we end up with a final partition $\mathcal{P}=\mathcal{P}_s= \{V_1,\ldots,V_{p'}\}$ for some integer $p'=p-s$ and $0\le s<p$, where every $V_i$ is $(H,\beta_{s+2}n,t_{s+2})$-closed. Now we claim that $p'=1$, and thus the proof is completed as $V(G)$ is $(H,\beta n,t)$-closed by taking $\beta=\beta_{p+1}$ and $t=t_{p+1}$.
Assume for a contradiction that $p'>1$, and recall that every $V_i$ is $(H,\beta_{s+2}n,t_{s+2})$-closed with $|V_i|\ge \frac{\gamma_1}{2}n, i\in[p']$. 
We additionally choose \[\be'_{s+2}\ll\frac{1}{M}\ll\eps\ll d\ll\alpha,\gamma_1,\frac{1}{h}.\]
Then by applying Lemma~\ref{reg lem} to $G$ with constants $\eps$ and $d:=\frac{\alpha}{4}$, we obtain, as a refinement of $\mathcal{P}$, a partition $\mathcal{P}^*=\{V_0\}\cup\{V_{i,j}\subseteq V_i:i\in[p'],j\in[k_i]\}$ and a spanning subgraph $G'\subseteq G$ such that $|V_0|\le \eps n$ and $\frac{1}{\eps}\le \sum_{i=1}^{p'} k_i\le M$. We write $m:=|V_{i,j}|$ for all $i\in[p'],j\in[k_i]$ and $k:=\sum_{i=1}^{p'} k_i$. Let $R:=R(\eps,d)$ be the reduced graph for the partition $\mathcal{P}^*$. By the choice $\varepsilon\ll d \ll\alpha$ and the fact that $\delta(G)\ge(1-\frac{1}{\chi^*(H)}+\alpha)n$, we have $\delta(R)\ge \left(1-\frac{1}{\chi^*(H)}+\alpha-d-2\varepsilon\right)k\ge \left(1-\frac{1}{\chi^*(H)}+\frac{\alpha}{2}\right)k$.

We call a subgraph in $R$ \emph{crossing} with respect to $\mathcal{P}$ if it has two vertices in $R$ whose corresponding clusters lie in different parts of $\mathcal{P}$.
Recall that $p'>1$. Then the following claims allow us to further merge $V_i,V_j$ into a new part, a contrary to the minimality of $p'$. We divide the proof into two cases depending on the dichotomy of $\chi^*(H)$. Before that, we state a simple fact as follows which would be frequently used in the proof.
\begin{fact}\label{commom neighbor}
Let $G$ be an $n$-vertex graph and $v_1,\ldots,v_k$ be $k$ distinct vertices in $G$. Then $$|\cap_{i=1}^k N(v_i)|\ge \Sigma_{i=1}^k |N(v_i)|-(k-1)n.$$
\end{fact}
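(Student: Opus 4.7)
The plan is to reduce the statement to the standard union bound by passing to complements inside $V(G)$. I would set $\overline{N(v_i)} := V(G)\setminus N(v_i)$, which has cardinality $n-|N(v_i)|$, and note that a vertex fails to lie in $\bigcap_{i=1}^{k} N(v_i)$ precisely when it lies in at least one $\overline{N(v_i)}$. Hence
\[
V(G) \setminus \bigcap_{i=1}^{k} N(v_i) \;=\; \bigcup_{i=1}^{k} \overline{N(v_i)}.
\]

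Applying the elementary union bound to the right-hand side yields
\[
n - \Bigl|\bigcap_{i=1}^{k} N(v_i)\Bigr| \;\le\; \sum_{i=1}^{k} |\overline{N(v_i)}| \;=\; \sum_{i=1}^{k} \bigl(n - |N(v_i)|\bigr) \;=\; kn - \sum_{i=1}^{k} |N(v_i)|,
\]
and rearranging recovers the claimed inequality $|\bigcap_i N(v_i)| \ge \sum_i |N(v_i)| - (k-1)n$.

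This is a purely set-theoretic observation: it does not interact with the incompatibility system $\mathcal{F}$, and only uses the fact that $N(v_i) \subseteq V(G)$ and $|V(G)|=n$. For that reason there is no genuine obstacle; the only minor bookkeeping point is that we count complements inside all of $V(G)$ (rather than $V(G) \setminus \{v_1,\ldots,v_k\}$), which is harmless because $|N(v_i)| \le n-1 < n$ and the inequality is moreover trivial whenever its right-hand side is non-positive.
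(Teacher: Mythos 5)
Your proof is correct: passing to complements and applying the union bound (equivalently, Bonferroni/inclusion--exclusion) is exactly the elementary argument that this Fact rests on, and the paper itself states it without proof precisely because it is this standard observation. Nothing further is needed.
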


\begin{claim}
If $\chi^*(H)=\chi(H)=r$, then there exist distinct $i,j\in[p']$ such that $\textbf{u}_i-\textbf{u}_j\in L^{\be'_{s+2}}_{\mathcal{P}}(G)$.
\end{claim}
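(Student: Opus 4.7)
My plan is to exploit the fact that, in the present case $\chi^*(H)=\chi(H)=r$, the reduced graph $R$ satisfies $\delta(R)\ge(1-\frac{1}{r}+\frac{\alpha}{2})k$, to locate in $R$ a copy of $K_{r-1}$ whose common $R$-neighbourhood meets at least two parts of $\mathcal{P}$, and then to convert this into two compatible $H$-copies whose index vectors differ by exactly one transferral. More precisely, I first aim to show the following structural subclaim: there exist pairwise adjacent clusters $W_1,\ldots,W_{r-1}\in V(R)$ and two further clusters $W_r,W_r'\in V(R)$, each adjacent in $R$ to every $W_i$ with $i<r$, such that $W_r\subseteq V_a$ and $W_r'\subseteq V_b$ for some distinct $a,b\in[p']$.

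To prove this subclaim, suppose for contradiction that for every $K_{r-1}$ in $R$ its common $R$-neighbourhood is contained in the clusters of a single part of $\mathcal{P}$. Then every $K_{r+1}$ in $R$ is confined to one part: for any $K_{r+1}=\{T_1,\ldots,T_{r+1}\}$ in $R$ and any distinct $i,j\in[r+1]$, the $K_{r-1}$ obtained by deleting $T_i$ and $T_j$ has both $T_i,T_j$ in its common $R$-neighbourhood, forcing them into the same part; varying $i,j$ places all of $T_1,\ldots,T_{r+1}$ in one part. Moreover, since Fact~\ref{commom neighbor} applied to $R$ gives $\bigl|\bigcap_{\ell=1}^{s}N_R(T_\ell)\bigr|\ge\bigl(1-\frac{s}{r}+\frac{s\alpha}{2}\bigr)k>0$ for every pairwise adjacent $T_1,\ldots,T_s$ in $R$ with $s\le r$, every edge of $R$ can be greedily extended to a $K_{r+1}$; by the previous observation this $K_{r+1}$, and hence the original edge, lies in one part. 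Consequently $R$ has no edge between distinct parts of $\mathcal{P}$, and the minimum-degree bound then forces every non-empty part of $\mathcal{P}$ to contain at least $(1-\frac{1}{r}+\frac{\alpha}{2})k+1$ clusters of $R$; summing over the $p'\ge 2$ parts yields $k=\sum_i|\{W\in V(R):W\subseteq V_i\}|>p'(1-\frac{1}{r})k\ge k$, a contradiction.

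Given the structure, fix any proper $r$-colouring $X_1,\ldots,X_r$ of $H$ with $|X_i|=h_i$, and write $a_i\in[p']$ for the index of the part of $\mathcal{P}$ containing $W_i$ (so $a_r=a$). I then construct two families of compatible $H$-copies: (a) \emph{standard} copies with $X_i$ embedded into $W_i$ for every $i\in[r]$, whose cardinality is lower-bounded by Lemma~\ref{K_h^r} applied to the $(\varepsilon,d)$-regular $r$-partite tuple $(W_1,\ldots,W_r)$, and which have index vector $\mathbf{v}_1:=\sum_{i<r}h_i\mathbf{u}_{a_i}+h_r\mathbf{u}_a$; and (b) \emph{shifted} copies with $X_i$ embedded into $W_i$ for $i<r$, with $h_r-1$ vertices of $X_r$ placed in $W_r$ and the last vertex of $X_r$ placed in $W_r'$. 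For (b) I would first apply Lemma~\ref{K_h^r} to $(W_1,\ldots,W_{r-1},W_r)$ with class sizes $(h_1,\ldots,h_{r-1},h_r-1)$ to produce $\Omega(m^{h-1})$ compatible partial copies; then, for each such partial copy, a minor modification of the induction step in the proof of Lemma~\ref{K_h^r} (using Facts~\ref{large deg}, \ref{commom neighbor} and~\ref{fact1} together with the $\mu n$-boundedness of $\mathcal{F}$) produces $\Omega(m)$ admissible extensions by a vertex $w\in W_r'$ that is a common $G$-neighbour of the image of the $H$-neighbourhood of the moved vertex and whose incident edges to the already placed vertices are all mutually compatible at $w$ and compatible at those vertices. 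Each such surviving extension yields a compatible $H$-copy with index vector $\mathbf{v}_2:=\sum_{i<r}h_i\mathbf{u}_{a_i}+(h_r-1)\mathbf{u}_a+\mathbf{u}_b$, so $\mathbf{v}_1-\mathbf{v}_2=\mathbf{u}_a-\mathbf{u}_b$.

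For robustness, both constructions produce $\Omega(m^h)$ compatible $H$-copies, and I would have chosen $\beta'_{s+2}\ll 1/M$. Hence for any $W^*\subseteq V(G)$ with $|W^*|\le\beta'_{s+2}n$, Fact~\ref{slicing lem} shows that the clusters $W_i\setminus W^*$ still satisfy the hypotheses of Lemma~\ref{K_h^r}, and the same counting arguments as above produce at least one compatible copy in each family inside $G-W^*$. Therefore $\mathbf{v}_1,\mathbf{v}_2\in I^{\beta'_{s+2}}_{\mathcal{P}}(G,H)$ and so $\mathbf{u}_a-\mathbf{u}_b=\mathbf{v}_1-\mathbf{v}_2\in L^{\beta'_{s+2}}_{\mathcal{P}}(G)$, which is the desired transferral. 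The main obstacle I anticipate is the extension step in (b): although each individual requirement (adjacency to $W_1,\ldots,W_{r-1}$, non-adjacency among the split $X_r$-image if $H$ demands it, and compatibility at every vertex) is handled by a standard tool, verifying that the number of admissible $w$'s remains linear in $m$ requires a careful bookkeeping of losses due to the $\mu n$-boundedness of $\mathcal{F}$, essentially mirroring the counting scheme used in the proof of Lemma~\ref{K_h^r}.
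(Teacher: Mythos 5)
Your proposal is correct, but it deliberately works with a weaker structural hypothesis than the paper and consequently has to pay for it at the embedding stage. The paper simply notes that $\delta(R)>k/2$ yields a crossing edge $V_{i,1}V_{j,1}$, then uses Fact~\ref{commom neighbor} to extend that \emph{edge} to a full $K_{r+1}$ in $R$. Because all $r+1$ clusters are then pairwise regular, Lemma~\ref{K_h^r} can be applied twice as a black box: once on $r$ of the clusters with part sizes $(h_1,\dots,h_r)$ to get $\mathbf{t}$, and once on all $r+1$ clusters with part sizes $(h_1,\dots,h_{r-1},h_r-1,1)$ to get $\mathbf{s}$, directly giving $\mathbf{s}-\mathbf{t}=\mathbf{u}_i-\mathbf{u}_j$. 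You instead only ask for a $K_{r-1}$ with two common $R$-neighbours $W_r,W_r'$ in distinct parts of $\mathcal{P}$ but do \emph{not} insist that $W_rW_r'$ be an edge of $R$; this is indeed enough (the last colour class of $K_r(h_1,\dots,h_r)$ is independent, so no edge between $W_r$ and $W_r'$ is needed for the embedding), but since Lemma~\ref{K_h^r} requires pairwise regularity among all its cluster inputs, you cannot invoke it on $(W_1,\dots,W_r,W_r')$ directly and instead have to perform the last extension by hand — the ``minor modification of the induction step'' you flag as the main obstacle. That extension is workable (essentially by running the inductive step of Lemma~\ref{K_h^r} with $W_r'$ as the seed vertex, since $W_r'$ need not be shrunk against $W_r$), but it re-derives a special case of a lemma the paper already has. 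The paper's choice of the slightly stronger $K_{r+1}$ structure — which costs nothing, thanks to Fact~\ref{commom neighbor} — makes this entire step unnecessary. Your structural subclaim is also proved by a roundabout contradiction, whereas the paper obtains the crossing edge in one line from $\delta(R)>k/2$ and the assumption $p'>1$. In short: correct, but a less economical path than the paper's, and the honest gap you flag is exactly the part the paper avoids by strengthening the $R$-structure it extracts.
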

\begin{proof}

In this case, we have that $\delta(R)\ge \left(1-\frac{1}{r}+\frac{\alpha}{2}\right)k> \frac{k}{2}$.
Thus we can easily find in $R$ a crossing edge, say $V_{i,1}V_{j,1}$ for distinct $i,j\in [p']$. Next we show that such a pair $\{i,j\}$ is as desired by finding two vectors \[\mathbf{s},~\mathbf{t}\in ~ I^{\be'_{s+2}}_{\mathcal{P}}(G)~~\text{with}~~\mathbf{s}-\mathbf{t}~=~\mathbf{u}_i-\mathbf{u}_j.\]
By Fact~\ref{commom neighbor}, we can find a copy of $K_{r+1}$ containing $V_{i,1},V_{j,1}$ in $R$, where the other $r-1$ vertices, without loss of generality, are denoted as $V_{a_1,1},\ldots,V_{a_{r-1},1}$.

Let $h_1\le \ldots \le h_r$ be the color-class sizes of $H$ under a proper $r$-coloring of $V(H)$. Clearly, $H$ is a subgraph of $K_r(h_1,\ldots,h_r)$. By Lemma~\ref{K_h^r} applied to $G$ with
 \[\eta=\frac{1-\eps}{k},~U_s=V_{a_s,1}~\text{for}~ s\in[r-1] ~\text{and}~U_r=V_{j,1}\]
and the fact that $\frac{1}{n}\ll\mu\ll \eps\ll d$, we obtain that there exist at least $c(r,d,h)m^{h}$ compatible copies of $K_r(h_1,\ldots,h_r)$ (also $H$) in $G[V_{a_1,1},\ldots,V_{a_{r-1},1},V_{j,1}]$, each containing exactly $h_s$ vertices in $V_{a_s,1}$ for every $s\in[r-1]$ and $h_r$ vertices in $V_{j,1}$. Note that all those compatible copies of $H$ have the same index vector with respect to $\mathcal{P}$, denoted by $\mathbf{t}$. Similarly, Lemma~\ref{K_h^r} applied to $G$ with
\[\eta=\frac{1-\eps}{k},~U_s=V_{a_s,1}~\text{for}~s\in[r-1] ~\text{and}~U_r=V_{j,1},U_{r+1}=V_{i,1}\]
gives at least $c(r+1,d,h)m^{h}$ compatible copies of $K_{r+1}(h_1,\ldots,h_r-1,1)$ (also $H$ and here we may assume $h_r>1$ as the case $h_r=1$ can be done in the same way) each containing exactly $h_s$ vertices in $V_{a_s,1}$ for every $s\in[r-1]$, $h_r-1$ vertices in $V_{j,1}$ and one vertex in $V_{i,1}$, where the corresponding index vector is denoted as $\mathbf{s}$. Then it is easy to see that $\mathbf{s}-\mathbf{t}=\textbf{u}_i-\textbf{u}_j$ and we claim that $\mathbf{s},\mathbf{t}\in I^{\be'_{s+2}}_{\mathcal{P}}(G)$. Indeed, this easily follows by the choice $\be'_{s+2}\ll\frac{1}{M}\ll \eps\ll d\ll\alpha,\frac{1}{h}$ and the fact that there are at least $c'm^h$ compatible copies of $H$ for $c'=\min\{c(r,d,h),c(r+1,d,h)\}$, whose vertex sets all have index vector $\mathbf{s}$ (or $\mathbf{t}$). \medskip
\end{proof}

Before diving into the case $\chi^*(H)=\chi_{cr}(H)$ (i.e.~$\hcf(H)=1$), we first deal with a case which would be frequently used in the proof.
\begin{claim}\label{crossing}
If $\hcf_{\chi}(H)$ is finite and $R$ has a crossing copy of $K_r$, then there exist distinct $i,j\in[p']$ such that $\hcf_{\chi}(H)(\textbf{u}_i-\textbf{u}_j)\in L^{\be'_{s+2}}_{\mathcal{P}}(G)$.
\end{claim}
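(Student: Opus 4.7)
The plan is to use the crossing $K_r$ in $R$ to produce many pairs of $(H,\beta'_{s+2})$-robust index vectors whose differences are integer multiples of $\mathbf{u}_j-\mathbf{u}_i$ for a single fixed pair $i\neq j$, and then extract $\hcf_\chi(H)(\mathbf{u}_j-\mathbf{u}_i)$ by a B\'ezout argument. Let $U_1,\ldots,U_r$ denote the $r$ clusters of the crossing $K_r$; after relabeling we may assume $U_1\subseteq V_i$ and $U_2\subseteq V_j$ with $i\neq j$, while $U_3,\ldots,U_r$ lie in arbitrary parts of $\mathcal{P}$. Every pair $(U_s,U_t)$ is $(\eps,d)$-regular of size $m$ by the construction of $\mathcal{P}^*$.

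For each proper $r$-coloring $c\in\C(H)$ with class sizes $h_1^c\le\cdots\le h_r^c$ and each pair $1\le a<b\le r$, I will set up two size-assignments $\sigma,\sigma'$ of the $r$ color classes of $c$ to the clusters $U_1,\ldots,U_r$ differing only in that $\sigma$ sends the class of size $h_a^c$ to $U_1$ and the class of size $h_b^c$ to $U_2$, while $\sigma'$ swaps these two assignments. Applying Lemma~\ref{K_h^r} with $\eta=(1-\eps)/k$, the present $d$ and $\eps^\ast=\eps$ (noting $\mu\ll 1/M$ from the parameter chain) yields, for each of $\sigma,\sigma'$, at least $c(r,d,h)\,m^h$ compatible copies of $K_r(h_1^c,\ldots,h_r^c)$ in $G[U_1,\ldots,U_r]$ with the prescribed intersection sizes; since $H\subseteq K_r(h_1^c,\ldots,h_r^c)$ under the coloring $c$, each such copy carries a compatible copy of $H$ on the same vertex set. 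Let $\mathbf{s}_{a,b,c},\mathbf{t}_{a,b,c}$ be the common index vectors with respect to $\mathcal{P}$ arising from $\sigma,\sigma'$ respectively; a direct bookkeeping check gives $\mathbf{s}_{a,b,c}-\mathbf{t}_{a,b,c}=(h_b^c-h_a^c)(\mathbf{u}_j-\mathbf{u}_i)$, since only the assignments to $U_1$ and $U_2$ were swapped.

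For $(H,\beta'_{s+2})$-robustness, observe that any single vertex lies in at most $h\,m^{h-1}$ of the copies just produced, so deleting any $W\subseteq V(G)$ with $|W|\le\beta'_{s+2}n$ removes at most $\beta'_{s+2}n\cdot h m^{h-1}\ll c(r,d,h)m^h$ of them (using $\beta'_{s+2}\ll 1/M$ and $n\le Mm$); at least one compatible copy of $H$ with each required index vector therefore survives in $G-W$. Hence $\mathbf{s}_{a,b,c},\mathbf{t}_{a,b,c}\in I^{\beta'_{s+2}}_{\mathcal{P}}(G,H)$ and so $\mathbf{s}_{a,b,c}-\mathbf{t}_{a,b,c}\in L^{\beta'_{s+2}}_{\mathcal{P}}(G)$.

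Finally, $\hcf_\chi(H)$ is by definition the $\gcd$ of $\{h_{k+1}^c-h_k^c:c\in\C(H),\,k\in[r-1]\}$ and is positive by hypothesis, so B\'ezout supplies integers $\lambda_{c,k}$ with $\hcf_\chi(H)=\sum_{c,k}\lambda_{c,k}(h_{k+1}^c-h_k^c)$. Specializing the above to consecutive pairs $(a,b)=(k,k+1)$ puts every $(h_{k+1}^c-h_k^c)(\mathbf{u}_j-\mathbf{u}_i)$ into $L^{\beta'_{s+2}}_{\mathcal{P}}(G)$, and taking the corresponding $\mathbb{Z}$-combination yields $\hcf_\chi(H)(\mathbf{u}_j-\mathbf{u}_i)=-\hcf_\chi(H)(\mathbf{u}_i-\mathbf{u}_j)\in L^{\beta'_{s+2}}_{\mathcal{P}}(G)$, which is equivalent to the claim. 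The only delicate step is the robustness check on $\mathbf{s}_{a,b,c},\mathbf{t}_{a,b,c}$: once the hierarchy $\mu\ll\beta'_{s+2}\ll 1/M\ll\eps\ll d$ is in place it is routine, but it is the one point where the interplay between $\beta'_{s+2}$, $M$ and the quantitative output of Lemma~\ref{K_h^r} is actually used.
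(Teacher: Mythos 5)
Your proof follows essentially the same strategy as the paper's: locate a crossing $K_r$ in $R$, use Lemma~\ref{K_h^r} to produce families of compatible copies of $H$ with prescribed intersection sizes under each proper $r$-coloring and each assignment of color classes to clusters, observe that swapping the assignments to the two clusters in distinct parts yields $(h_{k+1}^c-h_k^c)(\mathbf{u}_i-\mathbf{u}_j)\in L^{\be'_{s+2}}_{\mathcal{P}}(G)$, and conclude via B\'ezout. The only cosmetic differences are that you phrase this in terms of size-assignments rather than permutations $\sigma\in\mathbf{S}_r$, and you spell out the robustness verification that the paper handles with a brief remark; one small slip is that the counting lemma should be applied in $G'$ (where the $(\eps,d)$-regularity holds), not in $G$, but since $G'\subseteq G$ this does not affect the argument.
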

\begin{proof}
Without loss of generality, let $V_{x_1,1},\ldots,V_{x_{r},1}$ be the vertices of a crossing copy of $K_{r}$ in $R$, where $x_1=1,x_2=2$ and $1\le x_3\le \ldots\le x_{r}\le p'$ (if $r\ge 3$). Next we show that such a pair $\{1,2\}$ is as desired.

For any $r$-coloring $\phi$ in $\C(H)$ and its color-class sizes $h_1^{\phi}\le \cdots \le h_r^{\phi}$, by Lemma~\ref{K_h^r} applied to $G'$ with $U_i=V_{x_i,1}$ for every $i\in[r]$, we obtain for every permutation $\sigma\in \mathbf{S}_r$, a family $\mathcal{F}_{\sigma}$ of at least $cm^h$ compatible copies of $H$ each containing exactly $h_{\sigma(i)}^{\phi}$ vertices in $V_{x_i,1}$ for $i\in[r]$. It is easy to see by definition that every $\mathcal{F}_{\sigma}$ yields a $(H,\be'_{s+2})$-robust $h$-vector, denoted as $\mathbf{i}_{\sigma}^{\phi}$, whose $i$th coordinate is \[\mathbf{i}_{\sigma}^{\phi}(i)=\sum_{s:x_s=i}h_{\sigma(s)}^{\phi}, ~i\in [p'].\]
Thus for every $i\in [r-1]$, we choose $\sigma,\sigma'\in \mathbf{S}_r$ which have $\sigma(1)=i=\sigma'(2), \sigma(2)=i+1=\sigma'(1)$ and $\sigma(j)=\sigma'(j)$ for $j\neq1,2$. Then it follows that \[\mathbf{d}_i^{\phi}:=(h_{i+1}^{\phi}-h_i^{\phi}, h_i^{\phi}-h_{i+1}^{\phi}, 0,\ldots,0)~=~\mathbf{i}_{\sigma'}^{\phi}-\mathbf{i}_{\sigma}^{\phi}~\in~ L^{\be'_{s+2}}_{\mathcal{P}}(G).\]
Recall that $D(H)=\bigcup_{{\phi}\in\C(H)}\{h_{i+1}^{\phi}-h_i^{\phi}: i\in[r-1]\}$ and all the integers in $D(H)$ have the highest common factor $\hcf_{\chi}(H)$. Then there exist integers $a_i^{\phi}$ for any ${\phi}\in\C(H),i\in[r-1]$ such that \[\sum_{{\phi}\in\C(H),i\in[r-1]}a_i^{\phi}~(h_{i+1}^{\phi}-h_i^{\phi})~=~\hcf_{\chi}(H).\]
Therefore \[\sum_{{\phi}\in\C(H),i\in[r-1]}a_i^{\phi}~\mathbf{d}_i^{\phi}~=~(\hcf_{\chi}(H),-\hcf_{\chi}(H),0,\ldots,0)\] and by definition we have $\hcf_{\chi}(H)(\mathbf{u}_1-\mathbf{u}_2)\in L^{\be'_{s+2}}_{\mathcal{P}}(G)$. This completes the proof.
\end{proof}

Now we are ready to study the case $\chi^*(H)=\chi_{cr}(H)$. If $r\ge 3$, then by definition, it follows that $\hcf_{\chi}(H)=1$. Moreover, since $\chi_{cr}(H)>r-1$, it holds that $\delta(R)\ge \left(1-\frac{1}{r-1}+\frac{\alpha}{2}\right)k> \frac{k}{2}$ and we can easily find in $R$ a crossing edge, say $V_{1,1}V_{2,1}$ for example.
By Fact~\ref{commom neighbor} with $k=r-1$, we can similarly find a crossing copy of $K_{r}$ containing $V_{1,1},V_{2,1}$, and we are done by Claim~\ref{crossing}. Hence, it remains to consider the case when $H$ is bipartite with $\hcf_c(H)=1$ and $\hcf_{\chi}(H)\le 2$.

\begin{claim}
If $\chi(H)=2$, $\hcf_c(H)=1$ and $\hcf_{\chi}(H)\le 2$, then there exist distinct $i,j\in[p']$ such that $\textbf{u}_i-\textbf{u}_j\in L^{\be'_{s+2}}_{\mathcal{P}}(G)$.
\end{claim}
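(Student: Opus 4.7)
The key tool is the component decomposition of $H$. Write $H = H_1 \cup \cdots \cup H_q$ for its connected components, set $h_t := |V(H_t)|$, and let $(A_t, B_t)$ be the bipartition of $H_t$ with $|A_t| = a_t \leq b_t = |B_t|$, so $h_t = a_t + b_t$. From $\hcf_c(H) = 1$ we have $\gcd(h_1, \ldots, h_q) = 1$. A short number-theoretic check using both $\hcf_c(H) = 1$ and $\hcf_\chi(H) \leq 2$ also shows that $\gcd(b_1 - a_1, \ldots, b_q - a_q) = 1$: if this gcd were even then every $h_t$ would be even, contradicting $\hcf_c(H) = 1$; and since every element of $D(H)$ is a $\{\pm 1\}$-combination of the $b_t - a_t$, while flipping one component changes the class-difference by $2(b_t - a_t)$, one finds $\hcf_\chi(H) \in \{\gcd, 2\gcd\}$, so if the gcd were odd and $> 1$ then $\hcf_\chi(H)$ would exceed $2$. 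I split into two cases according to whether suitable parts carry internal regular pairs of clusters in the reduced graph $R$.

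\medskip

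\emph{Case 1: some distinct $i, j \in [p']$ admit internal regular pairs of clusters (of density at least $d$) inside both $V_i$ and $V_j$.} For each $t \in [q]$, I produce two $(H, \beta'_{s+2})$-robust $h$-vectors. Vector (a) is $h \mathbf{u}_i$, realised by a compatible copy of $H$ obtained by embedding the bipartition of $H$ into the internal regular pair inside $V_i$ via Lemma~\ref{K_h^r} with $r = 2$. Vector (b) is $(h-h_t)\mathbf{u}_i + h_t \mathbf{u}_j$, realised by embedding $H - H_t$ into the internal regular pair inside $V_i$ and $H_t$ into the internal regular pair inside $V_j$; since $H_t$ and $H - H_t$ are distinct components of $H$, the two sub-embeddings share no edges and can be counted independently, together yielding $\Omega(m^{h-h_t})\cdot\Omega(m^{h_t}) = \Omega(m^h)$ compatible copies of $H$ with the required index vector. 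Subtracting (a) from (b) gives $h_t(\mathbf{u}_j - \mathbf{u}_i) \in L^{\beta'_{s+2}}_{\mathcal{P}}(G)$ for every $t$; combined with $\gcd(h_1, \ldots, h_q) = 1$, B\'ezout's identity supplies integers $c_1, \ldots, c_q$ with $\sum_t c_t h_t = 1$, yielding the transferral $\mathbf{u}_j - \mathbf{u}_i \in L^{\beta'_{s+2}}_{\mathcal{P}}(G)$.

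\medskip

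\emph{Case 2: some part $V_i$ admits no internal regular pair.} Then every $R$-neighbour of every cluster inside $V_i$ lies outside $V_i$; together with $|V_i| \geq \gamma_1 n/2$ and $\delta(R) \geq (1 - 1/\chi_{cr}(H) + \alpha/2)k$, pigeonhole yields another part $V_{j^*}$ and a crossing regular pair $V_{i, a} V_{j^*, b}$ in $R$. For each $t \in [q]$, I produce two robust vectors by embedding $H$ across this pair: in the default embedding, $A_{t'}$ is placed in $V_{i,a}$ and $B_{t'}$ in $V_{j^*, b}$ for every component $t'$, giving index vector $\sigma(H)\mathbf{u}_i + (h - \sigma(H))\mathbf{u}_{j^*}$; in the flipped embedding, only the single component $H_t$ is swapped, shifting the vector by $(b_t - a_t)(\mathbf{u}_i - \mathbf{u}_{j^*})$. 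Lemma~\ref{K_h^r} with $r = 2$ supplies $\Omega(m^h)$ compatible copies in each case, so both vectors are $(H, \beta'_{s+2})$-robust. Subtraction yields $(b_t - a_t)(\mathbf{u}_i - \mathbf{u}_{j^*}) \in L^{\beta'_{s+2}}_{\mathcal{P}}(G)$ for every $t$; combined with $\gcd(b_1 - a_1, \ldots, b_q - a_q) = 1$, B\'ezout gives the transferral $\mathbf{u}_i - \mathbf{u}_{j^*} \in L^{\beta'_{s+2}}_{\mathcal{P}}(G)$.

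\medskip

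The main obstacle is this dichotomy itself: in the bipartite regime, $\delta(R) \geq (1 - 1/\chi_{cr}(H) + \alpha/2)k$ may be strictly less than $k/2$, so Claim~\ref{crossing} alone yields only $\hcf_\chi(H)(\mathbf{u}_i - \mathbf{u}_j) \in L^{\beta'_{s+2}}_{\mathcal{P}}(G)$, which may equal $2(\mathbf{u}_i - \mathbf{u}_j)$ and hence be insufficient. The missing factor of $2$ must be extracted from the component decomposition via $\hcf_c(H) = 1$, and activating it requires the structural case split above based on whether regular pairs inside the parts of $\mathcal{P}$ exist; verifying that one of the two cases always applies, and that both types of embeddings give the right robust index vectors, is the most delicate step.
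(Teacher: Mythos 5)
Your proof is correct and takes a genuinely different route from the paper's. The paper splits on the value of $\hcf_\chi(H)\in\{1,2\}$ and, within each subcase, on whether $R$ has a crossing edge; for $\hcf_\chi(H)=2$ with a crossing edge it first invokes Claim~\ref{crossing} to obtain $2(\mathbf{u}_i-\mathbf{u}_j)\in L^{\be'_{s+2}}_{\mathcal{P}}(G)$, then runs a separate argument flipping a single odd component across the crossing pair to produce an odd multiple of $\mathbf{u}_i-\mathbf{u}_j$, and combines the two. You instead split structurally on whether two distinct parts of $\mathcal{P}$ carry internal regular pairs, and in the crossing-edge case you observe that $\hcf_c(H)=1$ together with $\hcf_\chi(H)\le 2$ forces $\gcd(b_1-a_1,\ldots,b_q-a_q)=1$, which yields the full transferral directly from the component-flip differences; this uniformly handles $\hcf_\chi(H)\in\{1,2\}$ and removes the odd-component step. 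Your Case~1 (transferring whole components between internal pairs and applying B\'ezout to $\hcf_c(H)=1$) is essentially the paper's no-crossing-edge argument in a more economical form, while your Case~2 is a sharpened version of Claim~\ref{crossing} specialised to $r=2$ with $H$ disconnected: by tracking individual component flips rather than only the set $D(H)$ you recover the finer invariant $\gcd(b_t-a_t)$ in place of $\hcf_\chi(H)$, and the number-theoretic verification that this gcd equals $1$ under the hypotheses is correct. The net effect is a somewhat cleaner presentation; the paper's route has the organisational advantage of re-using Claim~\ref{crossing}, which is already needed for the $\chi^*(H)=\chi(H)$ case and for $r\ge 3$.
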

\begin{proof}
Since $\hcf_c(H)=1$, one can easily observe that $H$ is disconnected. Let $C_1,C_2,\ldots,C_{\ell}$ be the components of $H$ and $c_i:=|C_i|, i\in[\ell]$.
We further split the argument into two subcases: $\hcf_{\chi}(H)=1, 2$.

If $\hcf_{\chi}(H)=1$, then we may further assume that $R$ contains no crossing edge with respect to $\mathcal{P}$ as otherwise we are done by Claim~\ref{crossing}. Since $p'\ge 2$, we can pick two edges from different parts, say $V_{1,1}V_{1,2}$ and $V_{2,1}V_{2,2}$. Clearly, every component $C_i$ is a subgraph of $K_{h,h}$. By Lemma~\ref{K_h^r} to $V_{1,1},V_{1,2}$ (resp.~$V_{2,1},V_{2,2}$), we obtain for every subset $S\subseteq[\ell]$, a family $\mathcal{F}_{S}$ of at least $c^2m^{h}$ compatible copies of $H$ each containing exactly $\sum_{i\in S}c_i$ vertices in $V_1$ and $h-\sum_{i\in S}c_i$ vertices in $V_2$. It is easy to see by definition that every $\mathcal{F}_{S}$ yields an $(H,\beta_{s+2}')$-robust $h$-vector, denoted as $\mathbf{i}_{S}:=(\sum_{i\in S}c_i,h-\sum_{i\in S}c_i, 0,\ldots,0)$. Thus for every $i\in [\ell]$, we choose $S=\{i\},S'=\emptyset$ and it follows that \[\mathbf{d}_i:=(c_i, -c_i, 0,\ldots,0)~=~\mathbf{i}_S-\mathbf{i}_{S'}~\in~ L^{\be'_{s+2}}_{\mathcal{P}}(G).\]
Recall that all the integers in $\{c_1,c_2,\ldots, c_{\ell}\}$ have the highest common factor $\hcf_{c}(H)=1$. Then there exist integers $a_i$, $i\in[\ell]$ such that $\sum_{i\in[\ell]}a_ic_i=1$.
Therefore \[\sum_{i\in[\ell]}a_i\mathbf{d}_i=(1,-1,0,\ldots,0)\] and thus by definition we have $\mathbf{u}_1-\mathbf{u}_2\in L^{\be'_{s+2}}_{\mathcal{P}}(G)$.

For the case $\hcf_{\chi}(H)=2$, we may instead assume that there exists a crossing edge in $R$. As otherwise we can pick two edges from different parts since $p'\ge 2$, one can thus follow the same arguments in the paragraph above. Let $V_{1,1}V_{2,1}$ be such a crossing edge. Then Claim~\ref{crossing} immediately implies that $2(\mathbf{u}_1-\mathbf{u}_2)\in L^{\be'_{s+2}}_{\mathcal{P}}(G)$.

Since $\chi(H)=2$, $\hcf_{\chi}(H)=2$ and $\hcf_c(H)=1$, there exists an odd component of $H$, say $C_1$, i.e.~$c_1$ is odd. Given a $2$-coloring $\phi$ in $\C(H)$, we write $x_1, x_2$ as the color-class sizes for $C_1$, and $y_1, y_2$ for $H-C_1$ under $\phi$. Recall that $V_{1,1}V_{2,1}$ is a crossing edge. Then by Lemma~\ref{K_h^r} applied to $G$ with $r=2,U_1=V_{1,1}, U_2=V_{2,1}$ and $h_1=x_1+y_1,h_2=x_2+y_2$, we obtain at least $cm^h$ compatible copies of $H$ each containing exactly $x_i+y_i$ vertices in $V_{i,1}$ for $i\in[2]$. It is easy to see by definition that every such family yields an $(H,\be'_{s+2})$-robust $h$-vector, denoted as $\mathbf{i}_{even}:=(x_1+y_1,x_2+y_2, 0,\ldots, 0)$. Similarly we obtain $\mathbf{i}_{odd}:=(x_2+y_1,x_1+y_2, 0,\ldots, 0)\in I^{\be'_{s+2}}_{\mathcal{P}}(G)$.

Recall that $2(\mathbf{u}_1-\mathbf{u}_2)\in L^{\be'_{s+2}}_{\mathcal{P}}(G)$. Since $x_1, x_2$ have distinct parity, by letting $s=\frac{x_1-x_2-1}{2}$, we obtain that
\[\mathbf{i}_{even}-\mathbf{i}_{odd}-2s(\mathbf{u}_1-\mathbf{u}_2)= \mathbf{u}_1-\mathbf{u}_2~\in~ L^{\be'_{s+2}}_{\mathcal{P}}(G).\]
This completes the proof.
\end{proof}
\end{proof}




\subsection{1-reachablility}\label{sec4.3}


In this section, we shall focus on proving Lemma~\ref{1-reach}. 

\begin{proof}[Proof of Lemma \ref{1-reach}]

Given $\alpha>0$, we choose $$\frac{1}{n}\ll\mu,\beta_1\ll c,\gamma_1\ll \frac{1}{M}\ll\varepsilon\ll d\ll \alpha,\frac{1}{h}.$$
Apply Lemma \ref{reg lem} to $G$ and obtain a partition $V(G)=V_0\cup V_1\cup\ldots \cup V_k$ for some $1/\varepsilon\le k\le M$ such that $|V_0|\le \eps n,|V_i|=m$ for every $i\in[k]$. Let $R=R(\varepsilon,d)$ be the reduced graph for this partition. Then $\delta(R)\ge \left(1-\frac{1}{r-1}+\frac{\alpha}{2}\right)k$.

Choose an arbitrary vertex $u$ from $V(G)$.
Let $N'=\{V_i: i\in[k]~\text{and}~|N_G(u)\cap V_i|\ge \alpha m/2\}$.
By double counting, we have
$$(1-1/(r-1)+\alpha)n-|V_0|\le d_{G-V_0}(u)\le |N'|\cdot m+(k-|N'|)\cdot \alpha m/2,$$
that is, $|N'|\ge \left(1-\frac{1}{r-1}+\frac{\alpha}{4}\right)k$
.
Let $\ell=\left(1-\frac{1}{r-1}+\frac{\alpha}{4}\right)k$. We choose arbitrary $\ell$ clusters from $N'$.
Suppose that the corresponding vertices in $R$ of these $\ell$ clusters are $V_1,\ldots,V_{\ell}$. Write $L=\{V_1,\ldots,V_{\ell}\}$. Then we shall find a copy of $K_{r-1}$ in $R[L]$. Indeed, the case when $r=2$ is trivial and thus we
assume that $r\ge 3$. Since $\delta(R)\ge \left(1-\frac{1}{r-1}+\frac{\alpha}{2}\right)k$, we have $\delta(R[L])\ge \left(1-\frac{2}{r-1}+\frac{3\alpha}{4}\right)k$. Since $(r-2)\left(1-\frac{2}{r-1}+\frac{3\alpha}{4}\right)k-(r-3)\ell> 1$, by Fact \ref{commom neighbor} with $k=r-2$, we can greedily pick a copy of $K_{r-1}$ as every $r-2$ vertices in $L$ have a common neighbor in $L$. Suppose that the corresponding clusters of this copy of $K_{r-1}$ are $V_1,\ldots,V_{r-1}$. Again by Fact \ref{commom neighbor} applied to $R$ with $k=r-1$ and the fact that $(r-1)\left(1-\frac{1}{r-1}+\frac{\alpha}{2}\right)k-(r-2)k> 1$, we obtain that $V_1,\ldots,V_{r-1}$ have a common neighbor in $V(R)$, say $V_r$. It is worth noting that here the cluster $V_r$ may not in $L$. Let $V_i'=V_i\cap N_G(u)$ for every $i\in [r-1]$. Then $|V_i'|\ge \frac{\alpha m}{2}\ge \frac{\alpha(1-\varepsilon)}{2k}n$ for every $i\in[r-1]$.

Let $V'_r:=\{v\in V_r\mid d_{V_i'}(v)\ge (d-\eps)|V_i'| \text{ ~for~each~} i\in[r-1]\}$. Then by Fact~\ref{large deg}, we have that $|V_r'|\ge (1-(r-1)\eps)m\ge \frac{1}{2}m$. The following claim would finish the proof as $|V_r'|\ge \gamma_1 n$.
\begin{claim}\label{u-v reachable}
Every vertex $v\in V_r'$ is $(H,\beta_1n,1)$-reachable to $u$.
\end{claim}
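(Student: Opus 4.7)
The goal is, for any set $W$ with $|W|\le \beta_1 n$, to produce a single $S\subseteq V(G)\setminus W$ with $|S|=h-1$ such that $G[S\cup\{u\}]$ and $G[S\cup\{v\}]$ both contain a compatible copy of $H$. My plan is to force $u$ and $v$ to play the \emph{same} role in an embedded copy of a complete $r$-partite graph: letting $h_1\le\cdots\le h_r$ denote the colour-class sizes of $H$, the first $r-1$ classes will sit in $V_1',\ldots,V_{r-1}'\subseteq N(u)$ and the last class (of size $h_r$) in $V_r'$, with either $u$ or $v$ occupying one fixed seat of that last class. Since $H\subseteq K_r(h_1,\ldots,h_r)$, it is enough to exhibit a \emph{common} $S$ for which both $S\cup\{u\}$ and $S\cup\{v\}$ induce a compatible copy of $K_r(h_1,\ldots,h_r)$.

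First I localise the candidate sets. Put $U_i:=(V_i'\cap N(v))\setminus W$ for $i\in[r-1]$ and $U_r:=V_r'\setminus(W\cup\{u,v\})$. From $|V_i'|\ge\alpha m/2$ together with $d_{V_i'}(v)\ge(d-\varepsilon)|V_i'|$ for $i\in[r-1]$, from $|V_r'|\ge m/2$, and from $|W|\le\beta_1 n$ with $\beta_1$ small in the constant hierarchy, one gets $|U_i|\ge\eta n$ for some $\eta>0$ and every $i\in[r]$. The Slicing Lemma keeps every pair $(U_i,U_j)$ regular with density at least $d/2$, so Lemma~\ref{K_h^r}, applied with part sizes $(h_1,\ldots,h_{r-1},h_r-1)$ (or with $r-1$ parts of sizes $(h_1,\ldots,h_{r-1})$ in the degenerate case $h_r=1$), produces a family $\mathcal{K}$ of at least $c\,n^{h-1}$ compatible copies of $K_r(h_1,\ldots,h_{r-1},h_r-1)$ in $G[U_1,\ldots,U_r]$.

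For each $S\in\mathcal{K}$ I then try to extend $S$ by $u$ and, independently, by $v$ into a compatible copy of $K_r(h_1,\ldots,h_r)$. The necessary edges are automatically present: $u$ is adjacent to all of $U_1\cup\cdots\cup U_{r-1}$ by the definition of the $V_i'$, $v$ is adjacent to the same set because $U_i\subseteq N(v)$, and the new vertex shares the $r$th colour class with $S\cap U_r$ so no within-class edges are needed. The only way extension can fail is compatibility. On the $u$-side, any failure must involve a $u$-edge and is of one of two kinds: either two edges $us_1,us_2$ are incompatible at $u$, or an edge $us$ is incompatible at $s$ with an edge $st$ already inside the chosen $S$. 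Fact~\ref{fact1} packages both situations into at most $\mu n^2$ pairs $\{s_1,s_2\}\subseteq N(u)$, so at most $O(\mu n^{h-1})$ copies in $\mathcal{K}$ are $u$-bad. A symmetric application of Fact~\ref{fact1} at $v$, using $U_i\subseteq N(v)$ for $i\in[r-1]$, gives the same $O(\mu n^{h-1})$ bound on $v$-bad copies.

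Since $\mu\ll c$ in the hierarchy fixed at the start of the proof of Lemma~\ref{1-reach}, subtracting the $O(\mu n^{h-1})$ bad copies from $|\mathcal{K}|\ge cn^{h-1}$ still leaves a positive number of $S\in\mathcal{K}$ that work for $u$ and $v$ simultaneously; any such $S$ avoids $W$, has size $h-1$, and is the desired $H$-connector. The main delicate point is the combined compatibility count: one must check that the two different flavours of obstruction around $u$ (and symmetrically around $v$) are both absorbed into the single $\mu n^2$ bound of Fact~\ref{fact1}; once that is in hand, the rest reduces to routine slicing together with a direct invocation of Lemma~\ref{K_h^r}.
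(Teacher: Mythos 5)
Your argument is correct, and it follows the same overall scheme as the paper's (slice down to common neighbourhoods $U_1,\ldots,U_{r-1}\subseteq N_G(u)\cap N_G(v)$ and $U_r\subseteq V_r'$, invoke Lemma~\ref{K_h^r}, then discard the $O(\mu n^{|S|})$ copies whose extension by $u$ or $v$ creates an incompatibility, via Fact~\ref{fact1}). The one genuine difference is the choice of embedding target. The paper embeds $K_r(h,\ldots,h,h-1)$ on $rh-1$ vertices, extends by $u$ or $v$ to a compatible copy of $K_r^h$ on $rh$ vertices, and then uses that $K_r^h$ admits an $H$-factor ($r$ cyclically-shifted copies of $H$). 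That connector set has size $rh-1$, so what is literally verified is $(K_r^h,\beta_1 n,1)$-reachability, i.e.\ $(H,\beta_1 n, r)$-reachability up to this rewriting, even though the claim is phrased with $t=1$. You instead embed $K_r(h_1,\ldots,h_{r-1},h_r-1)$ on $h-1$ vertices and extend to a compatible $K_r(h_1,\ldots,h_r)\supseteq H$, giving $|S|=h-1$ exactly, which matches the stated $(H,\beta_1 n,1)$-reachability on the nose. Both versions are fine for the downstream use in Lemma~\ref{partition lem} (which only cares that $t$ is a bounded constant), so this is a cosmetic improvement rather than an essential one. Two minor points to tidy up if you write this out in full: (i) since you remove $W$ before slicing, you should note explicitly that $\beta_1\ll d\alpha/M$ suffices to keep each $|U_i|$ within a constant factor of its pre-removal size, so the Slicing Lemma remains applicable (the paper avoids this by applying Lemma~\ref{K_h^r} first and then deleting the $\le\beta_1 n^{|S|}$ copies meeting $W$); and (ii) for the second ``flavour'' of obstruction (an edge $us$ incompatible at $s$ with $st\in E(S)$), the other endpoint $t$ may lie in $U_r\not\subseteq N(u)$, so one should cite the $\mu n$-boundedness of $\mathcal{F}$ at $s$ directly rather than Fact~\ref{fact1} as literally stated (which restricts both vertices of the pair to $N(u)$); the count $\le\mu n^2$ is unchanged.
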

To prove this, for every $v\in V_r'$, we write $U_i=V_i'\cap N(v)$ for every $i\in[r-1]$ and $U_r:=V_r'\setminus \{v\}$. Then $|U_i|\ge(d-\eps)|V_i'|\ge \frac{d\alpha}{4} m\ge \frac{d\alpha}{8M}n, i\in[r]$ and Lemma \ref{slicing lem} implies that $U_1,\ldots,U_r$ are pairwise $(\varepsilon',d')$-regular with $\varepsilon'=\max\{4\varepsilon/d\alpha,2\varepsilon\}$ and $|d'-d|<\varepsilon$. By Lemma \ref{K_h^r} to $G$ with all subsets $U_i$ as given above, $\eta=\frac{d\alpha}{8M}$ and the fact that $\mu\ll\frac{1}{M}\ll \eps\ll d\ll\alpha,\frac{1}{h}$, we obtain at least $cn^{rh-1}$ compatible copies of $K_r(h,\ldots,h,h-1)$ in $G[U_1,\ldots,U_r]$ each containing $h$ vertices in $U_i$ for $i\in[r-1]$ and $h-1$ vertices in $U_r$. Note that by Fact~\ref{fact1}, there are at most $4\mu n^{rh-1}\le \frac{c}{2}n^{rh-1}$ compatible copies of $K_r(h,\ldots,h,h-1)$, each of which fails to form a compatible copy of $K^h_r$ with $u$ or $v$. Therefore we can find a family $\mathcal{K}$ of at least $\frac{c}{2}n^{rh-1}$ compatible copies of $K_r(h,\ldots,h,h-1)$ which together with $u$ (also for $v$) form compatible copies of $K^h_r$. Moreover, from the choice $\beta_1\ll c$, one can observe that deleting an arbitrary set of $\beta_1 n$ vertices would destroy at most $\beta_1n^{rh-1}<|\mathcal{K}|$ copies of $K_r(h,\ldots,h,h-1)$ in $\mathcal{K}$. Thus, $u$ and $v$ are $(K^h_r,\beta_1 n,1)$-reachable and also are $(H,\beta_1 n,1)$-reachable.
\end{proof}

\subsection{Transferral}\label{sec4.4}
Following similar arguments as in the proof of Lemma 3.9 in~\cite{Han2017packing}, we give a proof of Lemma~\ref{transferral lem}.
\begin{proof}[Proof of Lemma~\ref{transferral lem}]
Let $G$, $\be,\mu>0$ and $\mathcal{P}=\{V_1, V_2,\ldots,V_{\ell}\}$ be a partition of $V(G)$ as given in the assumption.
Recall that $I=I_{\mathcal{P}}^{\mu}(G)\subseteq \mathbb{Z}^{\ell}$ is a base for $L_{\mathcal{P}}^{\mu}(G)$, that is, every vector in $L_{\mathcal{P}}^{\mu}(G)$ can be written as linear combinations of the $h$-vectors in $I$. Thus given $\mathbf{u}_i -  \mathbf{u}_j\in L_{\mathcal{P}}^{\mu}(G)$, there exist $a_{\mathbf{v}}\in \mathbb{Z}$ such that $\mathbf{u}_i -  \mathbf{u}_j = \sum_{\mathbf{v}\in I}a_{\mathbf{v}}\mathbf{v}$. Without loss of generality, we may take $i=1,j=2$ for instance. For every $\mathbf{v}\in I$,  let $p_{\mathbf{v}}=\max\{a_{\mathbf{v}}, 0\}$ and $q_{\mathbf{v}}=\max\{-a_{\mathbf{v}}, 0\}$. Hence
\begin{align}
 \mathbf{u}_1 -  \mathbf{u}_2=\sum_{\mathbf{v}\in I} (p_{\mathbf{v}} - q_{\mathbf{v}}) \mathbf{v},
\quad i.e. \quad
\sum_{\mathbf{v}\in I}{q_{\mathbf{v}}}\mathbf{v} + \mathbf{u}_1=\sum_{\mathbf{v}\in I} {p_{\mathbf{v}}}\mathbf{v} +  \mathbf{u}_2. \label{regroup11}
\end{align}
By comparing the sums of all the coordinates from two sides of either equation in \eqref{regroup11}, we obtain that
$
\sum_{\mathbf{v}\in I}{p_{\mathbf{v}}}=\sum_{\mathbf{v}\in I}{q_{\mathbf{v}}}=:C$.

Next, by choosing $\frac{1}{n}\ll\be',\frac{1}{t'}\ll \be,\mu,\frac{1}{C}, \frac{1}{t},\frac{1}{h}$, we shall show that every two vertices $x\in V_1$ and $y\in V_2$ are $(H,\be'n,t')$-reachable. Let $x$ and $y$ be given as above and $W\subset V(G)\setminus \{x,y\}$ be any vertex set of size $\be'n$. Since every $h$-vector $\mathbf{v}\in I$ is $(H,\mu n)$-robust and $\be'\ll \mu$, we can greedily select, outside $W\cup\{x,y\}$, a collection of $p_{\mathbf{v}} + q_{\mathbf{v}}$ vertex-disjoint compatible copies of $H$ with index vector $\mathbf{v}$ for every $\mathbf{v}\in I$. This is possible as during the process we need to avoid at most $|W|+2+h\sum_{\mathbf{v}\in I}(p_{\mathbf{v}} + q_{\mathbf{v}})< \frac{\mu}{2} n$ vertices. This gives rise to two disjoint families $\mathcal{F}^p$ and $\F^q$, where $\F^p$ consists of $p_{\mathbf{v}}$ vertex-disjoint compatible copies of $H$ with index vector $ \mathbf{v}$ for every $ \mathbf{v}\in I$, and $\F^q$ consists of $q_{\mathbf{v}}$ vertex-disjoint compatible copies of $H$ with index vector $\mathbf{v}$ for every $ \mathbf{v}\in I$.

By equality \eqref{regroup11}, we have $|V(\F^p)|=|V(\F^q)|= hC$ and $\mathbf{i}_{}(V(\F^q))+ \mathbf{u}_1=\mathbf{i}_{}(V(\F^p)) +  \mathbf{u}_2$.
This implies that we may write $V(\F^p)=\{x_1, \dots, x_{hC}\}$, $V(\F^q)= \{y_1, \dots, y_{hC}\}$ such that
$x_1\in V_1$, $y_1\in V_2$, and for $i\ge 2$, $x_i$ and $y_i$ are from the same part of $\mathcal{P}$. Since each $V_i$ is $(H,\be n, t)$-closed for each $i\in [\ell]$, we greedily pick a collection $\{S_2,S_3,\ldots, S_{hC}\}$ of vertex-disjoint subsets in $V(G)\setminus (W\cup V(\F^p\cup\F^q)\cup\{x,y\})$ such that every $S_j$ is an $H$-connector for $x_j,y_j$ with $|S_j|\le ht-1$ for $2\le j\le hC$. Indeed, by the choice that $\frac{1}{n}\ll\be'\ll \be,\frac{1}{C},\frac{1}{h}$, we need to avoid at most
\[\left|\left(\bigcup_{j=2}^{hC}S_j\right)\cup W\cup V(\F^p\cup\F^q)\cup\{x,y\}\right|\le \frac{\be}{2} n\] vertices and therefore we are able to select every $S_j$ as $x_j$ and $y_j$ are $(H,\be n ,t)$-reachable for $2\le j\le hC$. Similarly, we additionally choose two $H$-connectors vertex-disjoint from each other and all other previously chosen vertices, say $S_0$ and $S_1$ for $x,x_1$ and $y,y_1$, respectively. At this point, we shall verify that the set $\hat S:=\bigcup_{j=0}^{hC}S_j\cup V(\F^p\cup\F^q)$ is an $H$-connector for $x,y$ of size at most $h(t+C+thC)-1$. In fact, to build a compatible $H$-factor for $G[\hat S \cup \{x\}] $ (leaving $y$ uncovered), we can take $H$-tilings in $G[S_0\cup\{x\}]$, $G[V(\F^p)]$ and $G[S_j\cup\{y_j\}]$ for $j\in [hC]$, which altogether form an $H$-tiling as desired. Similarly, one can observe that $G[\hat S \cup \{y\}]$ also admits a compatible $H$-factor. Hence $x$ and $y$ are $\left(H,\be'n, t'\right)$-reachable by taking $t'=t+C+thC$.

\end{proof}


\section{Acknowledgement}
The authors would like to thank Jie Han, Laihao Ding and Guanghui Wang for stimulating discussions and valuable comments.\\

\noindent\textbf{Note added before submission.} While preparing this paper, we learnt that Yangyang Cheng and Laihao Ding proved various Tur\'{a}n-type results in the incompatibility systems.

\bibliographystyle{abbrv}
\bibliography{ref}

\begin{appendices}

\section{Proofs of Lemma~\ref{abs set} and Lemma~\ref{closed}}\label{A1}

The proof of Lemma~\ref{abs set} can be easily derived by the following result.

\begin{lemma}\emph{\cite{chang2020factors}}\label{chang}
Let $F$ be a $k$-graph with $b$ vertices. Let $\gamma>0$ and $0<\varepsilon<\min\{1/3, \gamma/2\}$.
  Then there exists $\xi=\xi(b,\gamma)$ such that the following holds for every sufficiently large~$n$.
  Suppose $\mathcal{G}$ is an $n$-vertex $k$-graph such that, there exists $V_0\subset V(\mathcal{G})$ of size at most $\varepsilon n$ such that for every $b$-subset $S$ of $V(\mathcal{G})\setminus V_0$, there are at least $\gamma n$ vertex-disjoint $S$-absorbers, and for every vertex $v\in V(\mathcal{G})$, there are at least $\gamma n$ copies of $F$ containing $v$, where each pair of these copies only intersects at vertex $v$.
  Then $\mathcal{G}$ contains a subset $A\subseteq V(\mathcal{G})$ of size at most $\gamma n$ such that, for every subset
  $R\subseteq V(\mathcal{G})\setminus A$ with $|R|\leq \xi n$ such that $b$ divides $|A|+|R|$, the $k$-graph $\mathcal{G}[A\cup R]$ contains an
  $F$-factor.
\end{lemma}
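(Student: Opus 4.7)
The plan is to adapt Montgomery's absorption technique to the $k$-graph setting. The two hypotheses supply complementary tools: the second hypothesis (many $F$-copies through each vertex, pairwise meeting only at that vertex) handles the exceptional set $V_0$, while the first hypothesis (linearly many vertex-disjoint $S$-absorbers for every $b$-set outside $V_0$) underpins the main absorbing structure. The divisibility condition $b\mid|A|+|R|$ is used to ensure that $R$, together with a controlled amount of ``slack'' from $A$, partitions cleanly into $b$-sets for absorption.

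First I would greedily cover $V_0$. Processing $V_0$ vertex by vertex, for each $v\in V_0$ pick a copy $F_v$ of $F$ through $v$ (from the $\gamma n$ copies provided by the second hypothesis) vertex-disjoint from all previous choices. At each step only at most $b|V_0|\le b\varepsilon n<\gamma n/2$ vertices must be avoided, so this greedy selection succeeds. Let $\mathcal{M}_0=\{F_v:v\in V_0\}$ and $V_1:=V(\mathcal{G})\setminus(V_0\cup V(\mathcal{M}_0))$. After discarding the few absorbers meeting $V_0\cup V(\mathcal{M}_0)$, every $b$-set $S\subseteq V_1$ still admits at least $\gamma n/2$ vertex-disjoint $S$-absorbers inside $V_1$.

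Next I would build the core absorbing gadget using Montgomery's template: for suitable constants $\Delta_0,c>0$ and $m=\lfloor c\gamma n/b\rfloor$, there is a bounded-degree bipartite graph $T$ on parts $I$ and $J$ (of sizes $3m$ and $2m$, respectively) with the robust-matching property that for every $Z\subseteq I$ with $|Z|=m$, $T$ has a perfect matching between $I\setminus Z$ and $J$. Realize $T$ in $V_1$ by assigning each $x\in I$ a distinct $b$-set $B_x\subseteq V_1$ and for each edge $xy\in E(T)$ an $B_x$-absorber $A_{xy}\subseteq V_1$, keeping everything pairwise vertex-disjoint. This greedy embedding succeeds because only $O(m\Delta_0)$ absorbers of bounded size are required, while at each step at most $O(m b\Delta_0)<\gamma n/4$ vertices are forbidden, leaving plenty of disjoint absorbers by the first hypothesis. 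Define $A:=V(\mathcal{M}_0)\cup\bigcup_{x\in I}B_x\cup\bigcup_{xy\in E(T)}V(A_{xy})$; by the choice of constants $|A|\le\gamma n$.

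Finally, to verify the absorbing property, take $\xi=\xi(b,\gamma)$ small, and let $R\subseteq V(\mathcal{G})\setminus A$ with $|R|\le\xi n$ and $b\mid|A|+|R|$. Partition $R$, combined with a controlled amount of slack pulled from specific $B_x$'s to correct the residue mod $b$, into $b$-sets $R_1,\ldots,R_s$ with $s\le m$; identify each $R_i$ with a distinct $x_i\in I$, forming a set $Z\subseteq I$ of size $s$ padded arbitrarily to size $m$. The template's robust-matching property provides a perfect matching $\phi:I\setminus Z\to J$; each pair $(x,\phi(x))$ yields the $F$-tileable block $B_x\cup V(A_{x\phi(x)})$, while each $x_i\in Z$ is handled by choosing any incident edge $x_iy_i\in E(T)$ and $F$-tiling $R_i\cup V(A_{x_iy_i})$. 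Together with the pre-built $F$-copies of $\mathcal{M}_0$, these blocks assemble into the desired $F$-factor of $\mathcal{G}[A\cup R]$. The main technical obstacle is the substitution step: the absorber $A_{x_iy_i}$ was chosen for the original $B_{x_i}$ rather than for the arbitrary $b$-set $R_i$, so one must arrange that absorbers are robust under substitution. This is handled by either selecting absorbers from a large random common pool (so a union bound guarantees universal absorbers for most $b$-sets) or by an explicit two-sided flexibility construction; the parameter hierarchy $\varepsilon\ll\xi\ll c\gamma/(b\Delta_0)$ keeps all constraints consistent.
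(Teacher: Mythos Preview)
The paper does not prove this lemma; it is quoted from \cite{chang2020factors} and used as a black box to derive Lemma~\ref{abs set}, so there is no in-paper argument to compare against.

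Your sketch has a genuine gap exactly where you flag the ``main technical obstacle,'' and neither suggested fix works. With each $x\in I$ assigned a fixed $b$-set $B_x$ and each edge $xy$ an absorber $A_{xy}$ for $B_x$, when $R$ arrives you propose to tile $R_i\cup A_{x_iy_i}$---but $A_{x_iy_i}$ absorbs $B_{x_i}$, not $R_i$, so there is no reason $\mathcal G[R_i\cup A_{x_iy_i}]$ has an $F$-factor; moreover the sets $B_{x_i}$ for $x_i\in Z$ lie in $A$ yet are covered by nothing in your tiling. A random common pool cannot help: there are $\Theta(n^b)$ candidate $b$-sets, each absorber serves only one $b$-set by definition, and you can afford only $O(n)$ absorbers, so no union bound survives. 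The standard resolution (as in \cite{chang2020factors}) assigns to each $i\in I$ a \emph{single} vertex $x_i$, to each $j\in J$ a $(b-1)$-set $W_j$, and to each template edge $ij$ an absorber for the $b$-set $\{x_i\}\cup W_j$; the set $X=\{x_i\}$ is chosen at random so that, by the second hypothesis and concentration, every $v\in V(\mathcal G)$ has many $F$-copies whose other $b-1$ vertices all lie in $X$. Given $R$, one covers each $v\in R$ greedily by such a copy (padding with $F$-copies entirely inside $X$, with $b\mid|A|+|R|$ making the arithmetic exact) so as to consume precisely $m$ vertices of $X$; these indices form $Z$, and the template matching tiles everything that remains. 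Randomising single vertices rather than $b$-sets is what furnishes the universality over the unknown $R$ that your pool idea was reaching for.
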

\begin{proof}[Proof of Lemma~\ref{abs set}]

Given $h,t\in \mathbb{N}$, an $h$-vertex graph $H$ and $\gamma>0$, we shall choose $\frac{1}{n}\ll\xi\ll \frac{1}{t},\frac{1}{h},\gamma$. Let $(G,\mathcal{F})$ be an incompatibility system with $|V(G)|=n$ such that for every $S\in {V(G)\choose h}$ there is a family of at least $\gamma n$ vertex-disjoint $(H,t)$-absorbers. We construct an $h$-graph $\mathcal{G}$ on vertex set $V(G)$ where $E(\mathcal{G})$ consists of all compatible copies of $H$ in $(G,\mathcal{F})$. Let $F$ be an $h$-graph with exactly one edge. Then by definition, every $S\in {V(\mathcal{G})\choose h}$ has at least $\gamma n$ vertex-disjoint $(F,t)$-absorbers. By applying Lemma~\ref{chang} to $\mathcal{G}$ with $b=h$ and $V_0=\emptyset$, we obtain a subset $A\subseteq V(\mathcal{G})$ of size at most $\gamma n$ such that, for every subset
  $R\subseteq V(\mathcal{G})\setminus A$ with $|R|\leq \xi n$ such that $h$ divides $|A|+|R|$, the $k$-graph $\mathcal{G}[A\cup R]$ contains an $F$-factor.
Then $A$ is a desired $\xi$-absorbing set in $(G,\mathcal{F})$.
\end{proof}
\begin{proof}[Proof of Lemma~\ref{closed}]
For positive integers $h, t$ with $h\ge 3$ and  $\beta>0$, let $G,H,\mathcal{F}$ be given as in the assumption such that $V(G)$ is $(H,\beta n,t)$-closed. Then for every $h$-set $S\<V(G)$, we shall greedily construct as many pairwise disjoint $(H,t)$-absorbers for $S$ as possible. Let $\mathcal{A}=\{A_1,A_2,\ldots, A_{\ell}\}$ be a maximal family of vertex-disjoint $(H,t)$-absorbers as above. Suppose to the contrary that $\ell<\tfrac{\be}{h^3t}n,$.
Then $|\cup_{\mathcal{A}}A_i| \le \tfrac{\beta}{h} n $ as each such $A_i$ has size at most $h^2t$ (see Definition~\ref{defabs}). \medskip

As $V(G)$ is $(H,\beta n,t)$-closed and $|\cup_{\mathcal{A}}A_i\cup S|\le \beta n$, we can pick a compatible copy of $H$ in $V(G)\setminus(\cup_{\mathcal{A}}A_i\cup S)$ whose vertex set is denoted by $T$. Write $S=\{s_1,s_2,\ldots,s_h\}$ and $T=\{t_1, t_2,\dots, t_h\}$. We now greedily pick a collection $\{S_1,S_2,\ldots, S_{h}\}$ of vertex-disjoint subsets in $V(G)\setminus (\bigcup_{\mathcal{A}}A_i\cup S\cup T)$ such that each $S_i$ is an $H$-connector for $s_i,t_i$ with $|S_i|\le ht-1$. Indeed, since
\[\left|\bigcup_{i=1}^\ell A_i\cup\left(\bigcup_{i=1}^{k'}S_i\right)\cup S\cup T\right|\le\be n,\]
for any $0\leq k'\leq h$ (using that $n$ is sufficiently large), it is possible pick each $S_i$ one by one because $s_i$ and $t_i$ are $(H,\be n,t)$-reachable.
At this point, it is easy to verify that $\bigcup_{i=1}^{h}S_i\cup T$ is actually an $(H,t)$-absorber for $S$, contrary to the maximality of $\ell$.
\end{proof}

\section{Proof of Lemma~\ref{partition lem}}\label{A2}

We begin with a simple result as follows.
\begin{lemma}\label{concatenate}
Let $s_1,s_2\in \mathbb{N}, \beta_1,\beta_2>0$ and $H$ be an $h$-vertex graph. The following holds for any sufficiently large $n$. Let $(G,\mathcal{F})$ be an incompatibility system with $|V(G)|=n$, $u,v\in V(G)$ and $Z\subseteq V(G)$ with $|Z|=cn$ for some positive constant $c$. If $u$ is $(H,\beta_1n,s_1)$-reachable to every vertex in $Z$, while $v$ is $(H,\beta_2n,s_2)$-reachable to every vertex in $Z$, then $u,v$ are $(H,\beta_3 n,s_1+s_2)$-reachable, where $\beta_3 n=\min\{cn-1,\beta_1n/2,\beta_2n/2\}$.
\end{lemma}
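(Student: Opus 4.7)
The natural idea is to ``concatenate'' reachability through a common intermediate vertex picked from $Z$. Fix an arbitrary set $W\subseteq V(G)\setminus\{u,v\}$ of size $\beta_3 n$. Our goal is to exhibit an $H$-connector $S$ for $u,v$ of size at most $h(s_1+s_2)-1$ that is disjoint from $W$.

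First, since $|W|=\beta_3 n\le cn-1$, the set $Z\setminus(W\cup\{u,v\})$ is non-empty for $n$ large enough, so we can pick a vertex $w$ inside. Next we invoke the reachability assumption on the pair $u,w$: with forbidden set $W\cup\{v\}$, of size at most $\beta_3 n+1\le \beta_1 n/2+1\le \beta_1 n$, the $(H,\beta_1 n,s_1)$-reachability of $u$ to $w$ yields a set $S_1\subseteq V(G)\setminus(W\cup\{v\})$ with $|S_1|\le hs_1-1$ such that both $G[S_1\cup\{u\}]$ and $G[S_1\cup\{w\}]$ have compatible $H$-factors. Now we repeat for the pair $v,w$: using the forbidden set $W\cup\{u\}\cup S_1\cup\{w\}$, of size at most $\beta_3 n+hs_1+1\le \beta_2 n/2+hs_1+1\le \beta_2 n$ for $n$ sufficiently large, the $(H,\beta_2 n,s_2)$-reachability of $v$ to $w$ gives a set $S_2$, disjoint from $W\cup\{u\}\cup S_1\cup\{w\}$, with $|S_2|\le hs_2-1$ such that both $G[S_2\cup\{v\}]$ and $G[S_2\cup\{w\}]$ have compatible $H$-factors.

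Set $S:=S_1\cup S_2\cup\{w\}$. Then $S$ is disjoint from $W$ and
\[
|S|\le (hs_1-1)+(hs_2-1)+1 = h(s_1+s_2)-1,
\]
as required. To verify that $S$ is an $H$-connector for $u,v$, note that a compatible $H$-factor of $G[S\cup\{u\}]$ is obtained by concatenating a compatible $H$-factor of $G[S_1\cup\{u\}]$ with one of $G[S_2\cup\{w\}]$, and similarly a compatible $H$-factor of $G[S\cup\{v\}]$ is obtained from one of $G[S_1\cup\{w\}]$ together with one of $G[S_2\cup\{v\}]$. (The vertex-disjointness of the two halves is automatic by construction, and divisibility $h\mid|S|+1$ follows from $|S_i|\equiv -1\pmod h$.)

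There is no serious obstacle here; the only care needed is bookkeeping to guarantee that at each invocation of reachability the forbidden set stays within the permitted size, which is precisely the role of the factor $1/2$ in the definition of $\beta_3$. The bound $\beta_3 n\le cn-1$ is what ensures $Z$ still has room for the pivot vertex $w$ after removing $W$ and $\{u,v\}$.
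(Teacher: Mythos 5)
Your proposal is correct and follows essentially the same route as the paper's proof: pick a pivot vertex $w$ (called $u_1$ there) in $Z$ outside $W$, chain together an $H$-connector $S_1$ for $u,w$ and a disjoint $H$-connector $S_2$ for $w,v$, and observe that $S_1\cup S_2\cup\{w\}$ is an $H$-connector for $u,v$ of size at most $h(s_1+s_2)-1$. The bookkeeping of forbidden-set sizes and the use of the $1/2$ factors in $\beta_3$ matches the paper.
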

\begin{proof}
Let $W$ be an arbitrary subset of $G$ with $|W|\le\beta_3 n$. Since $|W|\le \min\{cn-1,\beta_1n/2\}$, we can find a vertex $u_1\in Z\setminus W$ and an $H$-connector $S_1$ for $u,u_1$ in $V(G)\setminus(W\cup\{u,u_1,v\})$ with $|S_1|\le ht_1-1$. Meanwhile, since $|W|\le \beta_2n/2$, we can also find an $H$-connector $S_2$ for $u_1,v$ in $V(G)\setminus(W\cup\{u,u_1,v\}\cup S_1)$ with $|S_2|\le ht_2-1$. Hence, $u,v$ are $(H,\beta_3n,t_1+t_2)$-reachable as both $G[S_1\cup S_2\cup\{u_1,u\}]$ and $G[S_1\cup S_2\cup\{u_1,v\}]$ contain compatible $H$-factors.
\end{proof}



\begin{proof}[Proof of Lemma \ref{partition lem}]
Write $r_0 = \lceil1/\gamma_1\rceil+1$, $t_j = 2^{r_0+1-j}$ for each $j\in [r_0+1]$ and choose constants $\frac{1}{n}\ll\lambda_1\ll \lambda_2\ll \lambda_3\cdots\ll\lambda_{r_0+1}=\be_1,\gamma_1, \frac{1}{r_0}$.
Let $(G,\mathcal{F})$ be an incompatibility system with $|G|=n$ such that every vertex in $V(G)$ is $(H,\beta_1 n,1)$-reachable to at least $\gamma_1 n$ other vertices.

Assume that there are two vertices that are not $(H, \lambda_{2} n, t_2)$-reachable, as otherwise we just output
$\mathcal P=\{V(G)\}$ as the desired partition. 
Observe also that every set of $r_0$ vertices must contain two vertices that are $(H, \lambda_{r_0}n, t_{r_0})$-reachable to each other.
Indeed, by the inclusion-exclusion principle we obtain a pair of vertices, say $u,v$, which are $(H, \beta_1 n, 1)$-reachable to a set of at least $\delta' n$ vertices for some $\delta'\ge \gamma_1/\binom{r_0}{2}$.
Thus, by Lemma~\ref{concatenate} with $s_1=s_2=1$, we deduce that $u$ and $v$ are $(H, \lambda_{r_0}n, t_{r_0})$-reachable as  $\lambda_{r_0}\ll \beta_1,\gamma_1, \frac{1}{r_0}$ and $t_{r_0}=2$.

Let $d$ be the largest integer such that there are $d$ vertices $v_1,\dots, v_d$ in $G$ which are pairwise \emph{not} $(H, \lambda_d n, t_d)$-reachable. Note that $d$ exists and $2\le d\le r_0-1$. Indeed, we assumed above that there are  $2$ vertices that are not $(H,\lambda_2n,t_2)$-reachable and  if $d= r_0$, then there would be $r_0$ vertices in $G$ which are pairwise not $(H, \lambda_{r_0}n, t_{r_0})$-reachable, contrary to the observation above. Let $S=\{v_1, \dots, v_d\}$. Note that $v_1, \dots, v_d$ are also pairwise not $(H, \lambda_{d+1} n, t_{d+1})$-reachable.

For a vertex $v$ and every $j\in [r_0+1]$, we write $\tilde{N}_j(v)$ for the set
of vertices which are $(H, \lambda_j n, t_j)$-reachable to $v$. For the sets $\tilde{N}_{d+1}(v_i)$, $i\in [d]$, we conclude that
\begin{description}
  \item[(1)] any vertex $v\in V(G) \setminus \{ v_1, \dots, v_d\}$ must be
in $\tilde{N}_{d+1}(v_i)$ for some $i\in [d]$.
Otherwise, $v, v_1, \dots, v_d$ are pairwise not $(H, \lambda_{d+1} n, t_{d+1})$-reachable, contradicting the maximality of $d$.
  \item[(2)] $| \tilde{N}_{d+1}(v_i) \cap \tilde{N}_{d+1}(v_j)|< \lambda_{d+1} n$ for any distinct $i,j\in[d]$. Otherwise Lemma~\ref{concatenate} applied with $s_1=s_2=t_{d+1}$ implies that $v_i, v_j$ are $(H, \lambda_d n, t_d)$-reachable to each other, a contradiction to the choice of vertices $v_i,i\in[d]$ (using that  $\lambda_d\ll\lambda_{d+1}$ and $2t_{d+1}=t_d$).
\end{description}

For $i\in [d]$, let \[U_i = (\tilde{N}_{d+1}(v_i)\cup \{ v_i\})
\setminus \bigcup_{ j\not = i} \tilde{N}_{d+1}(v_j).\]
Then we claim that each $U_i$ is $(H, \lambda_{d+1}n, t_{d+1})$-closed.
Indeed otherwise, there exist $u_1, u_2\in U_i$ that are not
$(H, \lambda_{d+1}n, t_{d+1})$-reachable to each other. Then
$\{ u_1, u_2\} \cup \{ v_1, \dots, v_d\} \setminus \{v_i\}$
contradicts the maximality of $d$.

Let $U_0 = V(G) \setminus (U_1\cup\dots\cup U_d)$.
We have $|U_0|< d^2\lambda_{d+1} n$ due to {\textbf{(2)}} above. In order to obtain the desired reachability partition,
we now drop every vertex of $U_0$ back into some $U_i$ for $i\in [d]$ as follows. Since each $v\in U_0$ is $(H, \beta_{1}n, 1)$-reachable (i.e.~$(H, \lambda_{r_0+1}n, t_{r_0+1})$-reachable) to at least $\gamma_1 n$ vertices, it follows from the fact $\lambda_{d+1}\ll \gamma_1,\frac{1}{r_0}$ that
\[|\tilde{N}_{r_0+1}(v)\setminus U_0| \ge \gamma_1 n - |U_0|\ge \gamma_1 n - d^2\lambda_{d+1} n > d\lambda_{d+1} n.\]
Therefore by the pigeonhole principle there exists some $i \in [d]$ such that $v$ is $(H, \lambda_{r_0+1}n, t_{r_0+1})$-reachable to at least
$\lambda_{d+1} n+1$ vertices in $U_i$. 
Lemma~\ref{concatenate} (using that $\lambda_d\ll\lambda_{d+1}\ll\lambda_{r_0+1}$) implies  that  $v$ is $(H, \lambda_d n, t_d)$-reachable to every vertex in $U_i$.

Recall that $2\le d\le r_0-1$.
Now partition $U_0$ as  $U_0=\cup_{i\in d}R_i$ where for each $i\in[d]$,  $R_i$ denotes a set of vertices $v\in U_0$ that are $(H, \lambda_d n, t_d)$-reachable to every vertex in $U_i$. Again by Lemma~\ref{concatenate} with $s_1=s_2=t_d$, for every $i\in [d]$,  every two vertices in $R_i$ (if any) are $(H, \lambda_{d-1} n, t_{d-1})$-reachable to each other. Let $\mathcal{P}=\{V_1, \dots, V_d\}$ be the final partition by setting $V_i=R_i\cup U_i$. Then each $V_i$ is $(H, \lambda_{d-1} n, t_{d-1})$-closed.
Also, for each $i\in[d]$, it holds that \[|V_i| \ge |U_i| \ge |\tilde{N}_{d+1}(v_i)| - d^2 \lambda_{d+1} n
\ge |\tilde{N}_{r_0+1}(v_i)| - \tfrac{\gamma_1}{2} n
\ge  \tfrac{\gamma_1}{2}n.\]
This completes the proof by taking $\be_2=\lambda_{d-1}$ and $t=t_{d-1}$.
\end{proof}

\end{appendices}


\end{document}